\numberwithin{equation}{section}
\newtheorem{theo}{Theorem}
\newtheorem{lemma}{Lemma} \newtheorem{coro}{Corollary}
\newtheorem{prop}{Proposition}
\theoremstyle{definition}
\newtheorem{exmp}{Example}
\theoremstyle{remark}
\newtheorem{remark}{Remark}
\newcommand{\eqbox}[2]{\text{\begin{minipage}[#1]{0.88\linewidth}#2\end{minipage}}}
\newcommand\Dc{{\mathcal D}}
\newcommand\Lc{{\mathcal L}}
\newcommand\Oc{{\mathcal O}}
\newcommand\Fb{{\mathbf F}}
\newcommand\Tb{{\mathbf T}}
\newcommand\N{{\mathbb N}} 
\newcommand\R{{\mathbb R}} \newcommand\C{{\mathbb C}}
\newcommand{\x}{\mathbf{x}}
\newcommand{\conv}{\operatorname{conv}}
\newcommand{\diag}{\operatorname{diag}}
\newcommand{\supp}{\operatorname{supp}}
\newcommand{\id}{\operatorname{id}}
\newcommand{\Var}{\operatorname{{Var}}}
\newcommand{\interior}[1]{\operatorname{int(#1)}}
\newcommand{\assign}{:=}
\newcommand\transpose{{\!\scriptscriptstyle\mathsf T}}
\renewcommand{\leq}{\leqslant}
\renewcommand{\geq}{\geqslant}
\renewcommand{\d}{\mathsf{d}_W}
\newcommand{\BV}{{\rm BV}}
\begin{document}

\title[Globally coupled maps with bistable thermodynamic limit]{Stochastically stable globally coupled maps with bistable thermodynamic limit}
\author{Jean-Baptiste Bardet, Gerhard Keller and Roland Zweim\"uller}
\noindent \address{J.-B. Bardet: IRMAR, Universit\'{e} Rennes 1, Campus de
  Beaulieu, 35042 Rennes Cedex, France and LMRS, Universit\'e de Rouen, Avenue de l'Universit\'e, BP.12, Technop\^ole du Madrillet,
76801 Saint-\'{E}tienne-du-Rouvray, France;
\newline
G. Keller: Department Mathematik,
  Universit\"{a}t Erlangen-N\"{u}rnberg, Bismarckstr. 1 1/2, 91054 Erlangen,
  Germany;
\newline
 R. Zweim\"uller: Fakult\"at f\"ur Mathematik,
Universit\"at Wien,
Nordbergstrasse 15,
1090 Wien, Austria.}
\noindent \email{{\tt jean-baptiste.bardet@univ-rouen.fr,
    keller@mi.uni-erlangen.de, roland.zweimueller@univie.ac.at }}
\date{\today}
\thanks{This cooperation was supported
  by the DFG grant Ke-514/7-1 (Germany).  J.-B.B. was also partially supported
  by CNRS (France). The authors acknowledge the hospitality of the ESI
  (Austria) where part of this research was done.  G.K. thanks Carlangelo
  Liverani for a discussion that helped to shape the ideas in
  Section~\ref{subsec:differentiability}.  }
\keywords{globally coupled maps, mean field, self consistent
  Perron Frobenius operator, bifurcation, bistability, iterated function system,
  Pick-Herglotz-Nevanlinna functions}
\subjclass[2000]{37A60,37D99,37L60,82C20}

\begin{abstract}
  We study systems of globally coupled interval maps, where the identical
  individual maps have two expanding, fractional linear, onto branches, and
  where the coupling is introduced via a parameter - common to all individual
  maps - that depends in an analytic way on the mean field of the system. We
  show: 1) For the range of coupling parameters we consider, finite-size
  coupled systems always have a unique invariant probability density which is
  strictly positive and analytic, and all finite-size systems exhibit exponential
  decay of correlations. 2) For the same range of parameters, the self-consistent
  Perron-Frobenius operator which captures essential aspects of the
  corresponding infinite-size system (arising as the limit of the above
  when the system size tends to infinity), undergoes a supercritical
  pitchfork bifurcation from a unique stable equilibrium to the coexistence of
  two stable and one unstable equilibrium.
\end{abstract}

\maketitle

\section{Introduction}
\label{sec:intro}

\emph{Globally coupled maps} are collections of individual discrete-time
dynamical systems (their \emph{units}) which act independently on their
respective phase spaces, except for the influence (the \emph{coupling}) of a
common parameter that is updated, at each time step, as a function of the
\emph{mean field} of the whole system. Systems of this type have received some
attention through the work of Kaneko \cite{Kaneko1,Kaneko2} in the early
1990s, who studied systems of $N$ quadratic maps acting on coordinates
$x_1,\dots,x_N\in[0,1]$, and coupled by a parameter depending in a simple way
on $\bar x:=N^{-1}(x_1+\dots+x_N)$. His key observation, for huge \emph{system
  size} $N$, was the following: if $(\bar x^t)_{t=0,1,2,\dots}$ denotes the
time series of mean field values of the system started in a random
configuration $(x_1,\dots,x_N)$, then, for many parameters of the quadratic
map, and even for very small coupling strength, pairs $(\bar x^t,\bar
x^{t+1})$ of consecutive values of the field showed complicated functional
dependencies plus some noise of order $N^{-1/2}$, whereas for uncoupled
systems of the same size the $\bar x^t$, after a while, are constant up to
some noise of order $N^{-1/2}$. While the latter observation is not surprising
for independent units, the complicated dependencies for weakly coupled
systems, a phenomenon Kaneko termed \emph{violation of the law of large
  numbers}, called for closer investigation.

The rich bifurcation structure
of the family of individual quadratic maps may offer some
explanations, but since a mathematically rigorous investigation of even a
small number of coupled quadratic maps in the chaotic regime still is a
formidable task, there seem to be no serious attempts to tackle this problem.

A model which is mathematically much easier to treat is given by coupled tent
maps. Indeed, for tent maps with slope larger than $\sqrt2$ and moderate
coupling strength, a system of $N$ mean field coupled units has an ergodic
invariant probability density with exponentially decreasing correlations
\cite{Keller97}. This is true for all $N$ and for coupling strengths that can be
chosen to be the same for all $N$. Nevertheless, Ershov and Potapov
\cite{ErPo1} showed numerically that (albeit on a much smaller length scale
than in the case of coupled quadratic maps) also mean field coupled tent maps
exhibit a violation of the law of large numbers in the aforementioned sense. They also
provided a mathematical analysis which demonstrated that the discontinuities of the
invariant density of a tent map are at the heart of the problem. Their
analysis was not completely rigorous, however, as Chawanya and Morita
\cite{ChaMo} could show that there are indeed (exceptional) parameters of the
system for which there is no violation of the law of large numbers -
contrary to the predictions in \cite{ErPo1}. On the other hand, references
\cite{NaKo1,NaKo2} contain further simulation results on systems violating
the law of large numbers. (But at present, a mathematically rigorous treatment of
globally coupled tent maps that is capable of classifying and explaining the
diverse dynamical effects that have been observed does not seem to be in
sight either.) These studies were complemented by papers by J\"arvenp\"a\"a
\cite{Jarvenpaa} and Keller \cite{Keller00}, showing (among other things)
that globally coupled systems of smooth expanding circle maps do not display
violation of the law of large numbers at small coupling strength, because
their invariant densities are smooth.

Given this state of knowledge, the present paper investigates specific systems
of globally coupled piecewise fractional linear maps on the interval
$X:=[-\frac12,\frac12]$, where each individual map has a smooth invariant
density.  For small coupling strength, Theorem 4 in \cite{Keller00} extends
easily to this setup and proves the absence of a violation of the law of large
numbers. For larger coupling strength, however, we are going to show that this
phenomenon does occur in the following sense:
\begin{description}
\item[Bifurcation] The nonlinear \emph{self-consistent Perron-Frobenius operator (PFO)}
  $\widetilde P$ on $L_1(X,\lambda)$, which describes the dynamics of the
  system in its thermodynamic limit, undergoes a supercritical pitchfork
  bifurcation as the coupling strength increases. (Here and in the sequel
  $\lambda$ denotes Lebesgue measure.)
\item[Mixing] At the same time, all corresponding finite-size systems have unique
  absolutely continuous invariant probability measures \boldmath$\mu$\unboldmath$_N$ on
  their $N$-dimensional state space, and exhibit exponential decay of correlations
  under this measure.
\item[Stable behaviour] In the \emph{stable regime}, i.e. for fixed small coupling
  strength below the bifurcation point of
  the infinite-size system, the measures \boldmath$\mu$\unboldmath$_N$
  converge weakly, as the system size $N\to\infty$, to an infinite product measure
  $(u_0\cdot\lambda)^{\mathbb N}$,
  where $u_0$ is the unique fixed point of $\widetilde P$.
\item[Bistable behaviour] In the \emph{bistable regime}, i.e. for fixed coupling
  strength above the bifurcation point of the infinite-size system,
  all possible weak limits of the measures
  \boldmath$\mu$\unboldmath$_N$ are convex combinations of the three infinite
  product measures $(u_r\cdot\lambda)^{\mathbb N}$, $r\in\{-r_*,0,r_*\}$, where now
  $u_0$ is the unique unstable fixed point of $\widetilde P$ and $u_{\pm r_*}$
  are its two stable fixed points. (We conjecture that the measure
  $(u_0\cdot\lambda)^{\mathbb N}$ is not charged in the limit.)
\end{description}
  This scenario clearly bears some resemblance to the Curie-Weiss model from
  statistical mechanics and its dynamical variants.

  We also stress that a simple modification of our system leads to a variant
  where, instead of two stable fixed points, one stable two-cycle for
  $\widetilde P$ is created at the bifurcation point.  This may be viewed as
  the simplest possible scenario for a violation of the law of large numbers
  in Kaneko's original sense.\\

In the next section we describe our model in detail, and formulate the main
results. Section~\ref{sec:finite-sys-proofs} contains the proofs for
finite-size systems. In Section~\ref{sec:ifs} we start the investigation of the
infinite-size system via the self-consistent PFO $\widetilde P$. We observe that
this operator preserves a class of probability densities which can be
characterised as derivatives of Herglotz-Pick-Nevanlinna functions.
Integral representations of these functions reveal a hidden order structure,
which is respected by the operator $\widetilde P$,
and allows us to describe the pitchfork bifurcation. In
Section~\ref{sec:infinite-sys-proofs} this dynamical picture for $\widetilde
P$ is extended to arbitrary densities.
Finally, in Section \ref{sec:noisy}, we
discuss the situation when some noise is added to the dynamics.

\section{{Model and main results}}
\label{sec:modelandresults}

\subsection{The parametrised family of maps}
\label{subsec:family-def}

Throughout, all measures are understood to be Borel, and
we let $\mathsf{P}(B):=\{$probability measures on $B\}$.
Lebesgue measure will be denoted by $\lambda$.
{We introduce} a $1$-parameter family of piecewise
fractional-linear transformations $T_{r}$ on
$X:=[-\frac{1}{2},\frac{1}{2}]${, which will play the role of
  the local maps}. To facilitate manipulation of such maps, we
{use their standard matrix representation, letting}
\[
f_{M}(x):=\frac{ax+b}{cx+d}\text{ \quad for any real $2\times2$-matrix
}M=\left(
\begin{array}
[c]{cc}%
a & b\\
c & d
\end{array}
\right)  \text{,}%
\]
so that $f_{M}^{\prime}(x)=(ad-bc)/(cx+d)^{2}$ and $f_{M}\circ f_{N}=f_{MN}$.
Specifically, we consider the function $f_{M_{r}}$, depending on a parameter
$r\in(-2,2)$, given by the coefficient matrix
\[
M_{r}:=\left(
\begin{array}
[c]{cc}%
r+4 & r+1\\
2r & 2
\end{array}
\right)  \text{.}%
\]
One readily checks that $f_{M_{r}}(-\frac{1}{2})=-\frac{1}{2}$, $f_{M_{r}%
}(\frac{1}{2})=\frac{3}{2}$, $f_{M_{r}}(\alpha_{r})=\frac{1}{2}$ for
$\alpha_{r}:=-r/4$, and that (the infimum being attained on $\partial X$)
\[
f_{M_{r}}^{\prime}(x)=\frac{4-r^{2}}{2\left(  rx+1\right)  ^{2}}\geq
2\,\frac{2-\left\vert r\right\vert }{2+\left\vert r\right\vert }=\inf
_{X}f_{M_{r}}^{\prime}>0\text{ \quad for }x\in X\text{.}%
\]
The latter shows that $f_{M_{r}}$ is uniformly expanding if and only if $\left\vert
r\right\vert <\frac{2}{3}$, and we define our \emph{single-site maps}
$T_{r}:X\rightarrow X$ with parameter $r\in(-2/3,2/3)$ by letting
\[
T_{r}(x):=f_{M_{r}}(x)\text{ mod }\left(  \mathbb{Z}+\frac{1}{2}\right)
=\left\{
\begin{array}
[c]{ll}%
f_{M_{r}}(x) & \text{on }[-\frac{1}{2},\alpha_{r})\text{,}\\
f_{M_{r}}(x)-1=f_{N_{r}}(x) & \text{on }(\alpha_{r},\frac{1}{2}]\text{,}%
\end{array}
\right.
\]
where
\[
N_{r}:=
\left(\begin{array}{cc}
  1&-1\\0&1
\end{array}\right)
M_r\ .
\]
We thus obtain a family $(T_{r})_{r\in(-2/3,2/3)}$ of uniformly
expanding, piecewise invertible maps $T_{r}:X\rightarrow X$, each
having two increasing covering branches. Note also that this family is
symmetric in that
\begin{equation} -T_{r}(-x)=T_{-r}(x)\text{ \quad for }r\in
 {\textstyle \left(
-\frac{2}{3},\frac{2} {3}\right)} \text{ and }x\in
X\text{.}\label{Eq_Symmetry}%
\end{equation}
According to well-known folklore results, each map
$T_{r}$, $r\in(-2/3,2/3)$, has a unique invariant probability density
$u_{r}\in\mathcal{D}:=\{u\in
L_{1}(X,\lambda):u\geq0,\int_{X}u\,d\lambda=1\}$, and $T_{r}$ is exact
(hence ergodic) w.r.t. the corresponding invariant measure. Due to
(\ref{Eq_Symmetry}), we have $u_{-r}(x)=u_{r}(-x)$ mod $\lambda$. We
denote the \emph{Perron-Frobenius operator} (\emph{PFO}),\
w.r.t. Lebesgue measure $\lambda$, of a map $T$ by $P_{T}$,
abbreviating $P_{r}:=P_{T_{r}}$. In our construction below we will
exploit the fact that 2-to-1 fractional linear maps like $T_{r}$ in
fact enable a fairly explicit analysis of their PFOs on a suitable
class of densities.  {In
particular, the $u_r$ are known explicitly:}
\begin{remark} Let $\gamma_{{r}}:=\frac{r}{1+r}$,
${\delta_r}:=\frac{r}{1-r}$. Then
  \begin{equation}
    \label{eq:inv-dens-1} \tilde u_r(x) :=
    \int_{\gamma_{{r}}}^{\delta_{{r}}}\frac1{(1-xy)^2}\,dy
    = \frac{2r^2}{(rx-(1-r))(rx-(1+r))}
  \end{equation}
  is an integrable invariant density for $T_r$, see \cite{Sch81}. Its
  normalised version
  \begin{equation}
    \label{eq:inv-dens-2}
u_r(x):=\left(\log\frac{r^2-4}{9r^2-4}\right)^{-1}\cdot \tilde u_r(x)
  \end{equation} is the unique $T_r$-invariant probability density.
\end{remark}

{The key point in the choice of this family of maps is
that for $r<0$,
$T_r$ is steeper in the positive part of $X$
{than} in its negative part,
hence typical orbits spend more time on the negative part, which is
confirmed by the invariant density (see Figure \ref{fig:Trandhr}).
If $r>0$, then $T_r$ favours the positive part.}

\begin{figure}
  \begin{minipage}{0.325\linewidth} \centering
    \includegraphics[width=0.95\linewidth,height=0.95\linewidth]{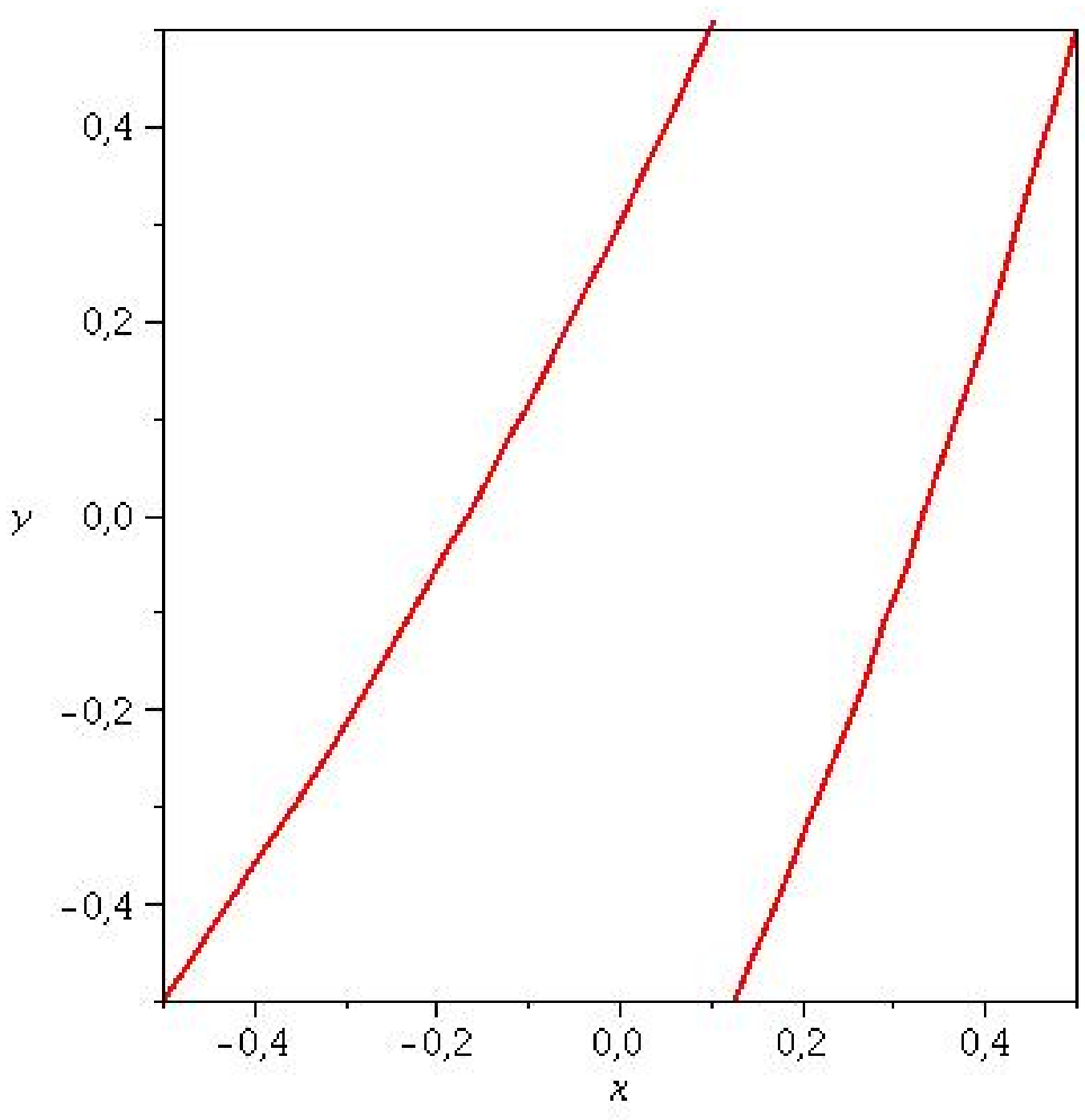}
  \end{minipage}
\hspace{0.1\linewidth}
  \begin{minipage}{0.325\linewidth} \centering
   \includegraphics[width=0.95\linewidth,height=0.95\linewidth]{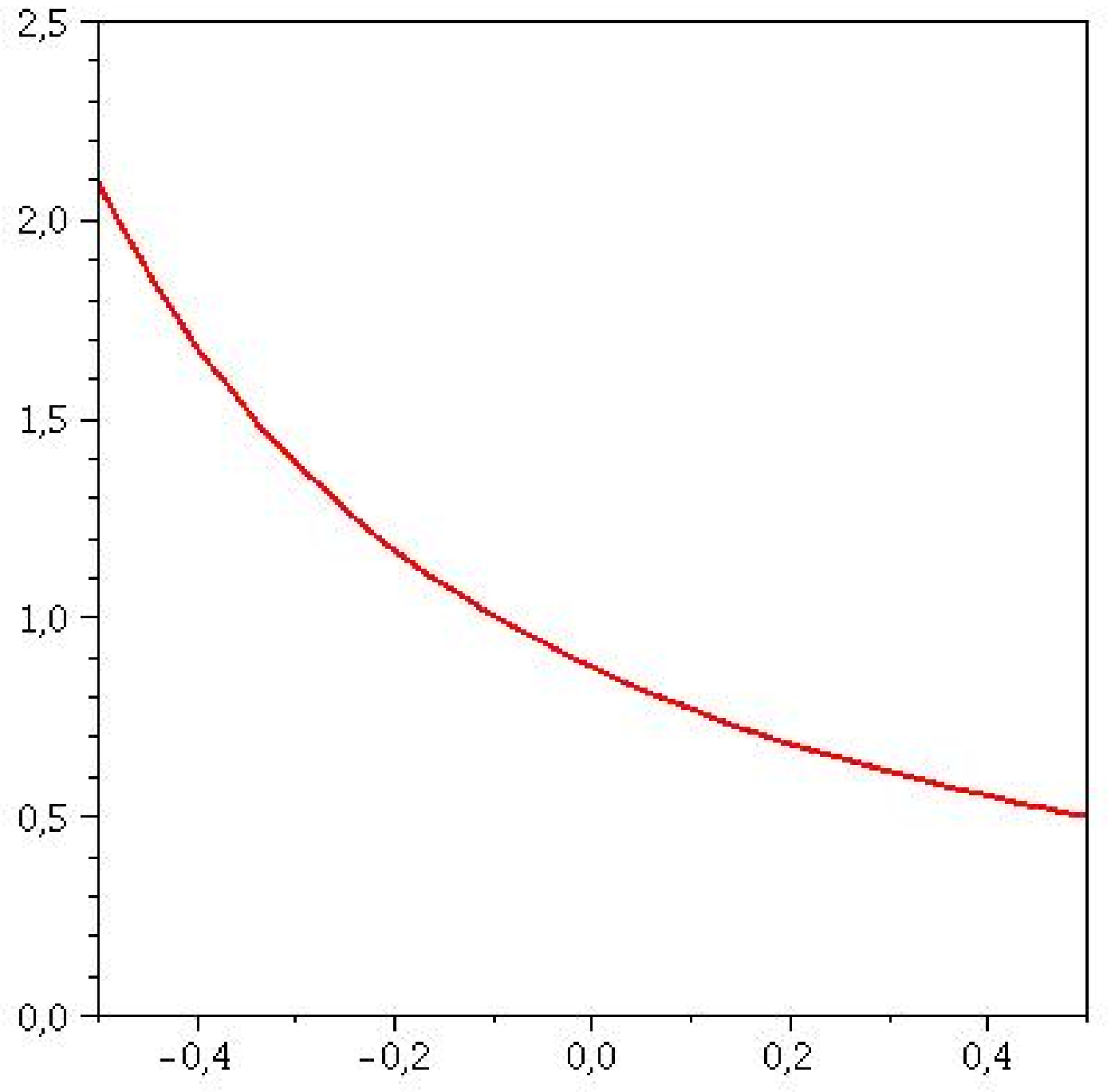}
  \end{minipage}
   \caption{{The functions $T_r$ (left), and $u_r$
(right), for $r=-\frac12$.}}
  \label{fig:Trandhr}
\end{figure}

{The heuristics of our construction is that for sufficiently
strong coupling this effect of ``polarisation'' is
reinforced and gives rise to bistable behaviour.}

\subsection{The field and the coupling}
\label{subsec:coupling} For any probability measure $Q \in \mathsf{P}(X)$, we
denote {its mean} by
\begin{equation}
  \label{eq:field-def-1} \phi (Q) := \int_{{X}} x\,
dQ(x) ,
\end{equation} {and call this} the \emph{field} of
$Q$.  With a slight abuse of notation we also write,
{for $u \in \mathcal{D}$,}
\begin{equation}
  \label{eq:field:def-2} \phi(u):= \int_{{X}} x
u(x)\,dx {\, =\phi (Q)} \quad \text{if
}u=\frac{dQ}{d\lambda} ,
\end{equation} and, {for $\x\in X^N,$}
\begin{equation}
  \label{eq:field:def-3}
\phi(\x):=\frac1N\sum_{i=1}^N{x_i}{\, =\phi (Q)}
\quad\text{if } \;Q=\frac1N\sum_{i=1}^N\delta_{x_i}\ .
\end{equation}

To define the system of globally coupled maps (both in the finite- and
the infinite{-size} case) we will, at each
step of the iteration, determine the actual parameter
{as a function of the present field. This is done
by means of a \emph{feedback function} $G:X\to R:=[-0.4,0.4]$ which we
always assume to be real-analytic
\footnote{This is only required to obtain highest regularity of the
invariant densities of the finite-size systems in Theorem \ref{theo:finite-systems}.
Everything else remains true if $G$ is merely of class $\mathcal{C}^2$.}
and \emph{S-shaped} in that it
satisfies $G'(x) > 0$ and
$G(-x)=-G(x)$ for all $x\in X$, while $G''(x) < 0$ if $x>0$. }
The most important single parameter in our
model is going to be $B:=G'(0)$ which quantifies the
\emph{coupling strength}.

\begin{remark}
  \label{remark:tanh} The following {will be our
standard example of a suitable feedback function $G$}:
  \begin{equation}
    \label{eq:G-def} G (x) := A \tanh \left(\frac{B}{A}x \right),
  \end{equation}
{where $0<A\leq0.4$ and $0\leq B\leq 18$.
(This requires some numerical effort. For $0<A\leq 0.2$ and
$0\leq B\leq 15$, elementary estimates suffice.)}
\end{remark}

{For the results to follow we shall impose a few
additional constraints on the feedback function $G$, made precise in
Assumptions I and II below. }

\subsection{The finite{-size} systems}
\label{subsec:finite-sys}

We consider a system $\Tb_N:X^N\to X^N$ of $N$ coupled copies of the
parametrised map, defined by $(\Tb_N (\x))_i = T_{r (\x)} (x_i)$ with
$r (\x) := G (\phi(\x))$. For the following theorem, which we prove in
section~\ref{sec:finite-sys-proofs}, we need the following assumption
{(satisfied by the example above)}:
\begin{equation}
  \label{eq:constraint} \eqbox{c}{\text{\textbf{Assumption
I:}}\quad\text{ $G'(x)\leq 25-50|G(x)|$ for all $x\in X$. }}
\end{equation}

\begin{theo}[\textbf{Ergodicity and mixing of
finite{-size} systems}]
\label{theo:finite-systems}
Suppose the S-shaped
function $G$ satisfies \eqref{eq:constraint}.  Then,
{for any $N \in \N$,} the map $\Tb_N:X^N\to X^N$
has a unique absolutely continuous invariant probability measure
{\boldmath$\mu$\unboldmath}$_N$.  Its density is
strictly positive and real analytic. The systems $(\Tb_N,$
{\boldmath$\mu$\unboldmath}$_N)$ are exponentially
mixing in various strong senses, in particular do H\"older observables
have exponentially decreasing correlations.
\end{theo}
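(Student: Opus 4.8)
\section*{Proof proposal for Theorem~\ref{theo:finite-systems}}

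The plan is to analyse the Perron--Frobenius operator $\mathcal{P}_N:=P_{\Tb_N}$ of $\Tb_N$ on $L_1(X^N,\lambda)$ and to extract the statement from a spectral gap. First one records the geometry of $\Tb_N$: it is piecewise real-analytic, its discontinuity set being the union of the $N$ analytic hypersurfaces $\Sigma_i=\{\x\in X^N: x_i=-G(\phi(\x))/4\}$, and $X^N\setminus\bigcup_i\Sigma_i$ splits into at most $2^N$ open pieces $R_\varepsilon$, $\varepsilon\in\{0,1\}^N$, on each of which $\Tb_N$ acts by a fixed choice of the branch $f_{M_r}$ or $f_{N_r}$ in each coordinate. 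Since every branch of every $T_r$ is onto $X$, each $\Tb_N|_{R_\varepsilon}$ extends to a real-analytic diffeomorphism onto $X^N$, with inverse $\psi_\varepsilon:X^N\to X^N$ defined on \emph{all} of $X^N$ (the parameter being recovered from $\y$ and $\varepsilon$ by solving a scalar fixed-point equation of the form $r=G(\tfrac1N\sum_i h_{r,\varepsilon_i}(y_i))$, which Assumption~I forces to have a unique, analytically varying solution). From the fractional-linear formulas one gets $D\Tb_N(\x)=\diag(T_r'(x_i))+\tfrac{G'(\phi(\x))}{N}\,v\,\mathbf 1^\transpose$ with $v_i=\partial_rT_r(x_i)$ and the clean identity $\partial_rT_r(x)/T_r'(x)=(1-4x^2)/(4-r^2)$ on both branches, hence $D\Tb_N(\x)=\diag(T_r'(x_i))\bigl(\id+\beta\,q\,\mathbf 1^\transpose\bigr)$ with $q_i=1-4x_i^2\in[0,1]$ and $\beta=\tfrac{G'(\phi(\x))}{N(4-r^2)}\geq 0$; by the matrix-determinant lemma $\det D\Tb_N(\x)=\bigl(\prod_iT_r'(x_i)\bigr)\bigl(1+\beta\sum_iq_i\bigr)$, which is $\geq\prod_iT_r'(x_i)>0$ and, using $G'\leq 25$ from \eqref{eq:constraint} and $4-r^2\geq 3.84$, also bounded above (for fixed $N$). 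Consequently $\mathcal{P}_Nh(\y)=\sum_\varepsilon h(\psi_\varepsilon(\y))\,|\det D\Tb_N(\psi_\varepsilon(\y))|^{-1}$ is a finite sum of \emph{strictly positive analytic} functions whenever $0\leq h$ is analytic.

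The technical core --- and the main obstacle --- is quasi-compactness. The difficulty is that $\Tb_N$ is \emph{not} uniformly expanding in the naive Euclidean sense: the rank-one coupling term $\beta q\mathbf 1^\transpose$ has operator norm of order $G'(\phi(\x))$, not $O(1/N)$, because $\|\mathbf 1\|=\sqrt N$. What rescues the estimate is the sign structure combined with \eqref{eq:constraint}: $G$ is $S$-shaped so $G'\geq 0$, and $q_i\geq 0$ on $X$, so $\id+\beta q\mathbf 1^\transpose$ only \emph{stretches} in the directions that matter; together with $T_r'\geq 2\tfrac{2-|r|}{2+|r|}>1$ and the elementary (Jensen) inequality $\tfrac1N\sum_iq_i^2\leq\tfrac1N\sum_iq_i$, one shows that the inverse branches $\psi_\varepsilon$ contract uniformly, at a rate below a fixed $\theta<1$, precisely when $G$ obeys $G'(x)\leq 25-50|G(x)|$ --- this is exactly what the constants $25,50$ and the range $R=[-0.4,0.4]$ are calibrated for. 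This uniform contraction, plus the bound on the complexity of $\bigcup_i\Sigma_i$ ($N$ analytic hypersurfaces, each a graph over a hyperplane), yields a Lasota--Yorke inequality for $\mathcal{P}_N$ on $\BV(X^N)$ in the spirit of \cite{Keller97} (and this step needs $G$ only in $\mathcal C^2$). By Ionescu-Tulcea--Marinescu / Hennion--Hervé, $\mathcal{P}_N$ is then quasi-compact on $\BV(X^N)$; since every branch of $\Tb_N$ is onto, $\Tb_N$ is topologically mixing (indeed exact) and $\mathcal{P}_N^k\mathbf 1\geq c_k>0$ for large $k$, so the peripheral spectrum reduces to the simple eigenvalue $1$. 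Its eigenfunction is the unique $\Tb_N$-invariant density $h_N\in\BV(X^N)$; uniqueness of the leading eigenvalue gives uniqueness of the absolutely continuous invariant probability measure $\mu_N=h_N\lambda$, the spectral gap gives exponential decay of correlations for $\BV$ and Hölder observables (``in various strong senses''), and $h_N=\mathcal{P}_N^kh_N\geq c_k\,\lambda(\{h_N>0\})^{-1}>0$ by the covering property, so $h_N$ is bounded away from $0$.

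It remains to upgrade the regularity of $h_N$ when $G$ is real-analytic. Here one sets up the transfer operator on the Banach space $H$ of functions holomorphic and bounded on a fixed complex polydisc neighbourhood $\mathcal U\supset X^N$. Each inverse branch $\psi_\varepsilon$, being in each coordinate a fractional-linear map mildly coupled through the analytic dependence $r=r(\y,\varepsilon)$, extends holomorphically to $\mathcal U$ and --- again by the contraction estimate secured via \eqref{eq:constraint}, now applied in the complex domain --- maps $\mathcal U$ into a compactly contained subset of $\mathcal U$; hence each composition operator $h\mapsto h\circ\psi_\varepsilon$, and therefore $\mathcal{P}_N$, is a compact (in fact nuclear, by a Ruelle--Atiyah--Bott type argument) operator on $H$. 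Thus $\mathcal{P}_N|_H$ also has $1$ as a simple leading eigenvalue with a spectral gap, and its (positive analytic) eigenfunction must coincide, by the uniqueness already established, with $h_N$. Therefore $h_N$ is real-analytic and strictly positive, as claimed. The expansion-versus-coupling balance of the previous paragraph is the step I expect to be genuinely delicate; everything else is routine transfer-operator technology once that estimate is in place.
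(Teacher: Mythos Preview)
Your proposal captures the right Jacobian structure and correctly isolates the uniform contraction as the crux, but the paper takes a cleaner route that sidesteps most of your machinery. Rather than working with the piecewise map $\Tb_N$ and its $2^N$ branches, the paper lifts to the single \emph{smooth} map $\Fb:X^N\to[-\tfrac12,\tfrac32]^N$, $(\Fb(\x))_i=f_{M_{r(\x)}}(x_i)$, so that $\Tb_N=\Fb\bmod(\Z+\tfrac12)^N$. Once one proves $\|(D\Fb(\x))^{-1}\|\leq\rho<1$ uniformly in $N$ and $\x$, a short topological argument (properness plus $\pi_1(\mathrm{int}\,X^N)=0$, via \cite{Chi98}) shows $\Fb$ is a global diffeomorphism, and real-analytic inversion plus the contraction carries $\Fb^{-1}$ to a holomorphic map on a complex neighbourhood mapping strictly inside itself. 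This places $\Tb_N$ squarely in the hypotheses of Mayer's theorem \cite{May84} for real-analytic locally expanding maps, which delivers the unique analytic strictly positive invariant density and exponential mixing in one stroke. Your detour through Lasota--Yorke on $\BV(X^N)$ and a separate holomorphic-space argument is avoidable and, in high dimension, not as innocuous as you suggest: multidimensional $\BV$ Lasota--Yorke inequalities require real control of the discontinuity boundary, which the lift simply erases.

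Two specific points. First, your claim that the scalar equation $r=G(\tfrac1N\sum_i h_{r,\varepsilon_i}(y_i))$ has a unique solution ``forced by Assumption I'' is not immediate: the $r$-derivative of the right-hand side has modulus up to $G'/(4-r^2)\leq 25/3.84\approx6.5$, so neither a contraction mapping nor a naive monotonicity argument works. The paper obtains global invertibility only \emph{after} the expansion estimate, via the properness argument. Second, the expansion estimate itself is not left as a black box: the paper writes $D\Fb^{-1}=(\mathbf{1}-\tfrac{g}{4-r^2+g\bar q}\,\mathbf q\,\mathbf e_N^\transpose)\Delta_1^{-1}$ by Sherman--Morrison, decomposes an arbitrary vector as $\alpha\mathbf v=\mathbf q-\mathbf p$ with $\mathbf p\perp\mathbf q$, uses exactly the inequality you spotted ($\bar q\geq N^{-1}\|\mathbf q\|^2$, i.e.\ $q_i\in[0,1]$), maximises in $t=N^{-1/2}\|\mathbf q\|$, and ends with the explicit bound
\[
\|(D\Fb)^{-1}\|\leq\frac12\cdot\frac{2+|r|}{2-|r|}\cdot\Bigl(1+\frac{9}{16\sqrt3\sqrt\Gamma}+\frac{1}{4\Gamma}\Bigr)^{1/2},\qquad \Gamma=\frac{4-r^2}{g},
\]
which is checked numerically to be $\leq0.994$ under \eqref{eq:constraint}. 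So the ``delicate'' step you flagged is indeed where the constants $25,50$ and $|r|\leq0.4$ are used, but it is carried out explicitly rather than by appeal to sign structure alone.
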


The key to the proof is an estimate ensuring uniform expansion.
After establishing the latter in
Section~\ref{sec:finite-sys-proofs}, the theorem follows from
``folklore'' results whose origins are not so easy to locate in the
literature. In a $C^2$-setting, existence, uniqueness and exactness of
an invariant density were proved essentially by Krzyzewski and Szlenk
\cite{KrSz69}. Exponential mixing follows from the compactness of the
transfer operator {first} observed by Ruelle \cite{Rue76}.
For a result which applies in our situation and
entails Theorem 1, we refer to the main theorem of \cite{May84}.

\subsection{ {The self-consistent PFO and the
thermodynamic limit of the finite-size systems} }
\label{subsec:selfconsistentpfo}

Since the coupling we defined is of mean-field
type, we can adapt from the probabilistic literature (see for example
\cite{Szn91,DaGa1}) the classical method of taking the \emph{thermodynamic
limit} of our family of finite-size systems $\Tb_N$, as $N\to\infty$.
To do so, consider the set
${\mathsf{P}}(X)$ of Borel probability measures on $X$, equipped with the
topology of weak convergence and the resulting Borel $\sigma$-algebra on ${\mathsf{P}}(X)$.
Define $\widetilde T : {\mathsf{P}}(X) \to {\mathsf{P}}(X)$ by
\begin{equation}
  \label{eq:r_mu}
    \widetilde{T}(Q){:= Q \circ T_{r(Q)}^{-1}},\quad
  \text{where}\quad
  {r(Q)} := G ( {\phi}(Q)).
\end{equation}

We can then represent the evolution of any finite-size
system using $\widetilde T$. Indeed, if
$\epsilon_N(\x):=\frac1N\sum_{i=1}^N\delta_{x_i}$ is the empirical
measure of $\x={(x_i)}_{1\leq i\leq N}$, then
$\epsilon_N : X^N \to {\mathsf{P}}(X)$ satisfies
$\epsilon_N \circ \Tb_N =\widetilde{T} \circ \epsilon_N$.

Furthermore, when {restricted} to the
set of probability measures absolutely continuous with
respect to $\lambda$, $\tilde T$ is represented by the
\emph{self-consistent Perron Frobenius operator}, which is the
nonlinear positive operator $\widetilde{P}$ defined as
\begin{equation}
\widetilde{P}:L_{1}(X,\lambda)\rightarrow
L_{1}(X,\lambda)\text{,
\quad}\widetilde{P}u:=P_{G(\phi(u))}u\text{.}
\end{equation}
Clearly, this map satisfies $\widetilde
T({u}\cdot\lambda)= (\tilde{P}{u})\cdot\lambda$
and preserves the set $\mathcal{D}$ of probability densities.
Note, however, that it does not
contract, i.e. there are $u,v\in\mathcal{D}$ such that $ \|
\widetilde{P} u - \widetilde{P} v \|_{L_{1}(X,\lambda)} > \| u - v
\|_{L_{1}(X,\lambda)} $.

One may finally join these two aspects, the action of $\widetilde T$
on means of Dirac masses, or on absolutely continuous measures,
via the following observation:

\begin{prop}[\textbf{Propagation of chaos}]
  \label{pr:propchaos} Let $Q = u\cdot \lambda \in {\mathsf{P}}(X)$,
  with $u\in{\mathcal D}$. If ${(x_i)}_{i\geq1}$ is chosen according
  to $Q^{\otimes\N}$, then, for any $n\geq0$, the empirical
  measures $\epsilon_N(\Tb_N^n(x_1,\ldots,x_N))$ converge weakly to
  $({\tilde{P}}^n u)\cdot\lambda$ as $N\to\infty$.
\end{prop}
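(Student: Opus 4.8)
The plan is to reduce everything to two facts and then induct on $n$. First I would use the intertwining identity $\epsilon_N\circ\Tb_N=\widetilde{T}\circ\epsilon_N$ already recorded above, iterated to $\epsilon_N(\Tb_N^n(\x))=\widetilde{T}^n(\epsilon_N(\x))$. The proposition then follows once we know (a) a law of large numbers, $\epsilon_N(x_1,\dots,x_N)\Rightarrow Q$ almost surely (with $\Rightarrow$ denoting weak convergence in $\mathsf{P}(X)$), and (b) that $\widetilde{T}$ is weakly continuous at absolutely continuous measures. Indeed, for $n=0$ the claim is (a); and if $\widetilde{T}^k(\epsilon_N(\x))\Rightarrow\widetilde{T}^k(Q)=(\widetilde{P}^ku)\cdot\lambda$ almost surely, then, since this limit is absolutely continuous and $\widetilde{T}\big((\widetilde{P}^ku)\cdot\lambda\big)=(\widetilde{P}^{k+1}u)\cdot\lambda$, applying (b) once more along the same almost sure event upgrades this to $\widetilde{T}^{k+1}(\epsilon_N(\x))\Rightarrow(\widetilde{P}^{k+1}u)\cdot\lambda$ almost surely, which is the claim at level $k+1$ (so one even obtains a single almost sure event that works for all $n$ at once).

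Fact (a) is routine: for each fixed $g\in C(X)$ the strong law of large numbers gives $N^{-1}\sum_{i=1}^N g(x_i)\to\int g\,dQ$ almost surely when the $x_i$ are i.i.d.\ with law $Q$; running this over a countable sup-norm dense subset of the separable space $C(X)$ and approximating an arbitrary $g\in C(X)$ produces a single almost sure event on which $\epsilon_N(x_1,\dots,x_N)\Rightarrow Q$.

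The real work is fact (b), which I would state as: if $Q_0(\{\alpha_{r(Q_0)}\})=0$ (in particular if $Q_0$ is absolutely continuous) and $Q_N\Rightarrow Q_0$, then $\widetilde{T}(Q_N)\Rightarrow\widetilde{T}(Q_0)$. Since $x\mapsto x$ is bounded and continuous on the compact $X$, the field $\phi$ is weakly continuous, and as $G$ is continuous the parameters $r_N:=G(\phi(Q_N))$ converge to $r_0:=G(\phi(Q_0))$. For $g\in C(X)$ write $\int g\,d\widetilde{T}(Q_N)=\int g\circ T_{r_N}\,dQ_N$, insert $\int g\circ T_{r_0}\,dQ_N$, and handle the two resulting differences separately. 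The term $\int g\circ T_{r_0}\,dQ_N\to\int g\circ T_{r_0}\,dQ_0$ by the continuous mapping theorem, because $g\circ T_{r_0}$ is bounded and its only discontinuity on $X$ sits at $\alpha_{r_0}$, a $Q_0$-null point. For the term $\int(g\circ T_{r_N}-g\circ T_{r_0})\,dQ_N$, recall $\alpha_r=-r/4$, let $I_N$ be the interval with endpoints $\alpha_{r_N},\alpha_{r_0}$ (length $\tfrac14|r_N-r_0|\to0$), and note that off $I_N$ both $T_{r_N}$ and $T_{r_0}$ are given by one and the same fractional linear branch of the family --- of the form $f_{M_r}$ on the part of $X$ to the left of $I_N$, and of the form $f_{N_r}$ to its right. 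Since $(r,x)\mapsto f_{M_r}(x)$ and $(r,x)\mapsto f_{N_r}(x)$ are continuous, hence uniformly continuous, on the compact set $R\times X$, and $g$ is uniformly continuous, $|g\circ T_{r_N}-g\circ T_{r_0}|\to0$ uniformly off $I_N$. On $I_N$ I would use the crude bound $2\|g\|_\infty$ together with $Q_N(I_N)\to0$: for any small $\delta>0$ and all large $N$ one has $I_N\subset[\alpha_{r_0}-\delta,\alpha_{r_0}+\delta]$, so $\limsup_N Q_N(I_N)\le Q_0([\alpha_{r_0}-\delta,\alpha_{r_0}+\delta])$ by the Portmanteau theorem, and letting $\delta\downarrow0$ this bound tends to $Q_0(\{\alpha_{r_0}\})=0$.

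I expect fact (b) --- really its joint continuity aspect in the pair (measure, parameter) --- to be the main obstacle: each branch of $T_r$ depends analytically on $(r,x)$, but $T_r$ itself has a discontinuity at $\alpha_r=-r/4$ that moves with $r$, so $r\mapsto T_r$ is not continuous in any naive sense. Absolute continuity of the limiting measure $Q_0$ is precisely what makes the mass on the shrinking uncertainty interval $I_N$ around $\alpha_{r_0}$ asymptotically negligible; everything else is bookkeeping.
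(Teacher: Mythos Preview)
Your proof is correct and follows essentially the same route as the paper: reduce to the intertwining identity $\epsilon_N\circ\Tb_N=\widetilde{T}\circ\epsilon_N$, invoke the strong law of large numbers for $n=0$, and induct using continuity of $\widetilde{T}$ at non-atomic limit measures (the paper's Lemma~\ref{le:contTtil}). Your proof of fact~(b) is also the paper's argument in slightly different packaging---you split off the shrinking interval $I_N$ between $\alpha_{r_N}$ and $\alpha_{r_0}$, whereas the paper fixes a small $\delta$-neighbourhood $U$ of $\alpha_{r_0}$ and works with Lipschitz test functions; both handle the moving discontinuity by the same Portmanteau-type estimate, and your hypothesis $Q_0(\{\alpha_{r(Q_0)}\})=0$ is a harmless weakening of the paper's ``$Q_0$ non-atomic''.
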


This result confirms the point of view that the self-consistent
PFO ${\tilde P}$ represents the infinite-size thermodynamic limit
$N\to\infty$ of the finite-size systems $\Tb_N$.
Its proof is reasonably simple (easier than for stochastic evolutions).
The only difficulty is that $\tilde T$ is not a continuous map on the
whole of ${\mathsf{P}}(X)$. This can be overcome with the following lemma,
which Proposition \ref{pr:propchaos} is a direct consequence of,
and whose proof is given in Section \ref{se:conTtil}.

\begin{lemma}[\textbf{Continuity of $\tilde T$ at non-atomic measures}]
\label{le:contTtil} Assume
that a sequence ${(Q_n)}_{n\geq1}$ {in ${\mathsf{P}}(X)$}
converges weakly to {some non-atomic $Q$}. Then 
${({\widetilde T}Q_n)}_{n\geq1}$
converges weakly to ${\widetilde T}Q$.
\end{lemma}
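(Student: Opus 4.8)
The plan is to test weak convergence against an arbitrary continuous function $g\in C(X)$, i.e.\ to show that $\int_X g\circ T_{r_n}\,dQ_n \to \int_X g\circ T_r\,dQ$, where I abbreviate $r_n:=r(Q_n)=G(\phi(Q_n))$ and $r:=r(Q)=G(\phi(Q))$. The easy first observation is that the field is weakly continuous: since $x\mapsto x$ is bounded and continuous on the compact interval $X$, we have $\phi(Q_n)\to\phi(Q)$, and continuity of $G$ then yields $r_n\to r$; note that all these parameters lie in $R=[-0.4,0.4]\subset(-\frac23,\frac23)$, so every map involved is a well-defined uniformly expanding map of the form described in Section~\ref{subsec:family-def}, with breakpoint $\alpha_\rho=-\rho/4$.

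I would then split
\[
\int g\circ T_{r_n}\,dQ_n - \int g\circ T_r\,dQ
= \underbrace{\int\big(g\circ T_{r_n}-g\circ T_r\big)\,dQ_n}_{(\mathrm I)}
 + \underbrace{\Big(\int g\circ T_r\,dQ_n - \int g\circ T_r\,dQ\Big)}_{(\mathrm{II})}.
\]
Term $(\mathrm{II})$ is handled by the continuous mapping / portmanteau theorem in the version for bounded functions whose discontinuity set is a $Q$-null set: the function $g\circ T_r$ is bounded and continuous on $X$ except at the single breakpoint $\alpha_r$, and since $Q$ is non-atomic, $Q(\{\alpha_r\})=0$; hence $(\mathrm{II})\to 0$.

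For term $(\mathrm I)$ I would exploit non-atomicity once more. Given $\varepsilon>0$, pick $\eta>0$ so small that $Q\big([\alpha_r-\eta,\alpha_r+\eta]\big)<\varepsilon$ and $Q(\{\alpha_r-\eta,\alpha_r+\eta\})=0$; this is possible because $Q$ is non-atomic. By weak convergence, $Q_n(F)\to Q(F)<\varepsilon$ for the closed set $F:=[\alpha_r-\eta,\alpha_r+\eta]$. Away from $F$: since $\alpha_{r_n}=-r_n/4\to\alpha_r$, for all large $n$ both breakpoints $\alpha_{r_n},\alpha_r$ lie inside $F$, so on $X\setminus F$ the maps $T_{r_n}$ and $T_r$ are given by the \emph{same} fractional-linear branch; and since $(\rho,x)\mapsto f_{M_\rho}(x)$ and $(\rho,x)\mapsto f_{N_\rho}(x)$ are jointly continuous, hence uniformly continuous, on $R\times X$ (the denominators $\rho x+1$ stay bounded away from $0$), together with uniform continuity of $g$ this gives $\sup_{X\setminus F}|g\circ T_{r_n}-g\circ T_r|\to 0$. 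On $F$ we only use $|g\circ T_{r_n}-g\circ T_r|\le 2\|g\|_\infty$. Therefore $\limsup_n|(\mathrm I)|\le 0 + 2\|g\|_\infty\limsup_n Q_n(F)\le 2\|g\|_\infty\varepsilon$, and letting $\varepsilon\to 0$ forces $(\mathrm I)\to 0$. Combining the two terms, $\int g\,d(\widetilde T Q_n)\to\int g\,d(\widetilde T Q)$ for every $g\in C(X)$, which is the assertion.

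The only genuine obstacle is the discontinuity of $T_r$ at the breakpoint $\alpha_r$ — pushing a measure forward under a discontinuous map is not weakly continuous in general — compounded by the fact that the breakpoint $\alpha_{r_n}$ itself moves with the varying parameter. Non-atomicity of the limit $Q$ is precisely what overcomes both: it makes a small neighbourhood of $\alpha_r$ carry little $Q$-mass, and the portmanteau inequality transfers this smallness to the $Q_n$, which then absorb the only region where $T_{r_n}$ and $T_r$ (or their jumps) can disagree appreciably.
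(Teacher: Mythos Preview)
Your proof is correct and follows essentially the same route as the paper's: both split the integral into a ``vary the measure with the map fixed'' piece (your $(\mathrm{II})$) handled via the portmanteau theorem at the single $Q$-null discontinuity, and a ``vary the map with the measure $Q_n$ fixed'' piece (your $(\mathrm{I})$) handled by isolating a small neighbourhood of the breakpoint via non-atomicity and using uniform convergence of the branches outside it. The only cosmetic differences are that the paper tests against Lipschitz functions rather than general continuous ones, and that you take the extra care of choosing the endpoints of $F$ to be non-atoms so that $Q_n(F)\to Q(F)$ exactly rather than just $\limsup Q_n(F)\le Q(F)$.
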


Here is an immediate consequence of this lemma that will be used below.

\begin{coro}
  \label{coro:contTtil}
  Assume that a sequence ${(\pi_n)}_{n\geq1}$ of $\widetilde T$-invariant
  Borel probability measures \emph{on} ${\mathsf{P}}(X)$
  converges weakly to some probability $\pi$ \emph{on} ${\mathsf{P}}(X)$. If
  there is a Borel set $A\subseteq{\mathsf{P}}(X)$ with $\pi(A)=1$
  which only contains non-atomic measures, then $\pi$ is also
  $\widetilde T$-invariant.
\end{coro}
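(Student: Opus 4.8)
The plan is to reduce $\widetilde T$-invariance of $\pi$ to an identity tested against continuous functions, and then to pass to the limit $n\to\infty$ using the $\widetilde T$-invariance of each $\pi_n$ together with Lemma~\ref{le:contTtil} and a strengthened portmanteau theorem. Note first that, since $X$ is compact metric, $\mathsf{P}(X)$ equipped with the weak topology is compact metrizable; hence a Borel probability measure $\rho$ on $\mathsf{P}(X)$ is $\widetilde T$-invariant if and only if $\int (F\circ\widetilde T)\,d\rho=\int F\,d\rho$ for every $F\in C(\mathsf{P}(X))$. (Here one uses that $\widetilde T$ is Borel measurable — which is anyway implicit in the hypotheses — this being true because $\widetilde T$ is continuous off the Borel set of measures that charge some point.)

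So fix $F\in C(\mathsf{P}(X))$. For each $n$, $\widetilde T$-invariance of $\pi_n$ gives $\int (F\circ\widetilde T)\,d\pi_n=\int F\,d\pi_n$. Letting $n\to\infty$, the right-hand side tends to $\int F\,d\pi$, because $\pi_n\to\pi$ weakly and $F$ is bounded and continuous. The only nontrivial point is that the left-hand side tends to $\int (F\circ\widetilde T)\,d\pi$: this is not immediate, since $F\circ\widetilde T$ need not be continuous on $\mathsf{P}(X)$.

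For this step I would invoke the continuous mapping theorem in the sharpened form: if $\mu_n\to\mu$ weakly on a metric space and $g$ is a bounded Borel function whose discontinuity set $D_g$ (automatically an $F_\sigma$, hence Borel) is $\mu$-null, then $\int g\,d\mu_n\to\int g\,d\mu$. Apply this with $g:=F\circ\widetilde T$, which is bounded and Borel. By Lemma~\ref{le:contTtil}, $\widetilde T$ is continuous at every non-atomic measure, hence so is $g$; therefore $D_g$ is contained in the complement of the set of non-atomic measures, and in particular $D_g\subseteq\mathsf{P}(X)\setminus A$. Since $\pi(A)=1$, this gives $\pi(D_g)=0$, so the theorem applies and $\int (F\circ\widetilde T)\,d\pi_n\to\int (F\circ\widetilde T)\,d\pi$. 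Combining with the previous paragraph yields $\int (F\circ\widetilde T)\,d\pi=\int F\,d\pi$; as $F$ was an arbitrary continuous function on the compact metric space $\mathsf{P}(X)$, we conclude $\pi\circ\widetilde T^{-1}=\pi$.

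The argument has no serious obstacle; the one point requiring care is the verification that $D_{F\circ\widetilde T}$ is $\pi$-null, which is exactly what Lemma~\ref{le:contTtil} and the hypothesis that $\pi$ is carried by non-atomic measures are designed to provide. Everything else is routine bookkeeping with weak convergence on the compact space $\mathsf{P}(X)$.
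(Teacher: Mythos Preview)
Your argument is correct and is exactly the natural way to spell out what the paper leaves implicit: the paper states the corollary as ``an immediate consequence'' of Lemma~\ref{le:contTtil} and gives no separate proof. Your use of the sharpened continuous mapping theorem (bounded Borel functions whose discontinuity set is null for the limit measure) is precisely the standard device needed here, and the identification of the discontinuity set of $F\circ\widetilde T$ via Lemma~\ref{le:contTtil} is the intended point.
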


\subsection{The {long-term behaviour of the} infinite{-size} system}
\label{subsec:infinite-sys}

{Our goal is to analyse the asymptotics of
$\widetilde{P}$ on $\mathcal{D}$.} Some basic features of
$\widetilde{P}$ can be understood considering {the dynamics of}
\[
{\textstyle H: \left(-\frac{2}{3},\frac{2}{3}\right)\to R:= \left[-\frac{4}{10},\frac{4}{10}\right],}\
\quad {H}(r):=G(\phi(u_{r})),
\]
which governs the action of $\widetilde{P}$ on the densities $u_{r}$
introduced in \S ~\ref{subsec:family-def}, as
\begin{equation}
\widetilde{P} u_{r}=P_{{H}(r)}u_{r}.
\end{equation}
{In studying $\widetilde{P}$, we will always presuppose the following: }
\begin{equation}
  \label{eq:S-shape}
  \textbf{Assumption II:}\ \quad\ {H}\ \text{is S-shaped}.
\end{equation}
{This assumption can be {checked numerically} for specific feedback
functions $G$, like that of Remark \ref{remark:tanh}, cf. \S \ref{ssec:infinite-sys-proofs-1}
below. By (\ref{Eq_Symmetry}), ${H}(-r)=-{H}(r)$.
{Note}, however, that $r\mapsto\phi(u_r)$ itself is not S-shaped (see Figure~\ref{fig:plots-1})
so that the S-shapedness of $G$ alone is not sufficient for that of ${H}$.}\\

{Assumption II will enter our arguments only via the following
dichotomy which it entails: either}
\begin{center}
  ${H}(r)$ has a unique fixed point at $r=0$\\
  (the {\bf{stable regime}}  with ${H}'(0)\leq1$ and $r=0$ stable),\\
  or\\
  ${H}(r)$ has exactly three fixed points $-r_*<0<r_*$\\
  (the {\bf{bistable regime}} with ${H}'(0)>1$ and $\pm r_*$ stable).
\end{center}
{We will see that  ${H}'>0$ and ${H}^{\prime}(0)={G'(0)}/6$,
so that the stable regime corresponds
to the condition $G'(0)\leq6$. Observe now that}
\begin{equation}
  \label{eq:r-star}
  \widetilde{P}u_{r}=u_{r}\quad \text{ iff } \quad
  \begin{cases}
    r=0& \text{({in the stable regime})}\\
    r\in\{0,\pm r_{\ast}\}&\text{({in the bistable regime})}
  \end{cases}
\end{equation}
(since $u_{r}\neq u_{r^{\prime}}$ for $r\neq r^{\prime}$, and
each $T_{r}$ is ergodic). We are going to show that the fixed points $u_{0}=1_{X}
$, and $u_{\pm r_{\ast}}$ dominate the long-term behaviour of $\widetilde{P} $
on $\mathcal{D}$ completely, {and that they inherit the stability
properties of the corresponding parameters $-r_*<0<r_*$. Therefore, the stable/bistable
terminology {for $H$} introduced above also provides an appropriate
description of the asymptotic
behaviour of $\widetilde{P}$.}

\begin{theo} [\textbf{Long-term behaviour of }$\widetilde{P}$\textbf{\ on
  }$\mathcal{D}$ ]
\label{theo:infinite-system}
Consider $\widetilde{P}:\Dc\rightarrow\Dc$, $\Dc$ equipped
with the metric inherited from $L_1(X,\lambda)$.
{Assuming (I) and (II), we have the following:}

\begin{enumerate}[1)]
\item \label{theo2:item1}
  {In the stable regime}, $u_0$ is the unique fixed point of $\widetilde{P}$,
  and attracts all densities, that is,
  $$\lim_{n\to\infty}\widetilde{P}^n u=u_0\ \quad \text{for all}\ u\in\Dc.$$
\item {In the bistable regime}, $\{u_{-r_{\ast}},u_0,u_{r_{\ast}}\}$
  are the only fixed points of $\widetilde{P}$. Now $u_0$ is unstable, while
  $u_{-r_{\ast}}$ and $u_{r_{\ast}}$ are stable. More precisely:
  \begin{enumerate}[a)]
  \item $u_{\pm r_{\ast}}$ are stable fixed points for $\widetilde{P}$ in the
    sense that
      their respective basins of attraction are $L_1$-open.
  \item If $u {\in \mathcal{D}}$ is not attracted by
    $u_{-r_{\ast}}$ or $u_{r_{\ast}}$, then it is attracted by $u_{0}$.
  \item $u_{0}$ is not stable. Indeed, $u_0$ can be $L_1$-approximated
    by convex analytic densities from either basin. It is a hyperbolic fixed
    point of $\widetilde P$ in the sense made precise in
    Proposition~\ref{prop:hyperbolic-fixed} of Section~\ref{sec:infinite-sys-proofs}.
  \end{enumerate}
\end{enumerate}
\end{theo}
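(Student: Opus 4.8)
\emph{Strategy.} The proof reduces the infinite‑dimensional dynamics of $\widetilde{P}$ on $\Dc$ to the one‑dimensional dynamics of the scalar map $H$, in two stages: first on the distinguished class of Herglotz--Pick densities of Section~\ref{sec:ifs}, then on all of $\Dc$. Consider the convex cone $\mathcal{H}$ of densities $u_\nu(x):=c_\nu\int(1-xy)^{-2}\,d\nu(y)$, where $\nu$ ranges over positive measures supported in a fixed compact interval $Y\supset\bigcup_{|r|<2/3}[\gamma_r,\delta_r]$ and $c_\nu$ normalises the integral over $X$. Every such density is strictly positive, convex and real‑analytic, and $u_r=u_{\lambda|_{[\gamma_r,\delta_r]}}$ by \eqref{eq:inv-dens-1}. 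Because each branch of $T_r$ is fractional‑linear, $P_r$ sends the elementary kernel $x\mapsto(1-xy)^{-2}$ to a positive combination of the two kernels at points $y_1'(y),y_2'(y)$ depending fractional‑linearly on $y$; hence $P_r$, and therefore $\widetilde{P}$, maps $\mathcal{H}$ into itself, inducing on the measures $\nu$ a nonlinear operator built from a two‑map iterated function system whose parameters depend on $\nu$ only through $G(\phi(u_\nu))$. The key structural facts, proved in Section~\ref{sec:ifs} via the integral (Herglotz--Nevanlinna) representation, are that this induced operator and the field functional $\nu\mapsto\phi(u_\nu)$ are monotone for a natural order on the $\nu$'s, and that this monotonicity slaves the asymptotics of $\widetilde{P}$ on $\mathcal{H}$ to the iteration of $H$ on $R$: the fixed points of $\widetilde P$ in $\mathcal H$ are exactly the $u_r$ with $H(r)=r$, cf.~\eqref{eq:r-star}, and, invoking the dichotomy entailed by Assumption~II (in the stable regime $0<H(r)<r$ for $r>0$, in the bistable regime $H(r)>r$ on $(0,r_*)$ and $H(r)<r$ on $(r_*,\tfrac23)$, symmetrically for $r<0$), every density in $\mathcal{H}$ is attracted to $u_0=1_X$ in the stable regime, while in the bistable regime $\mathcal{H}$ splits into the two order‑intervals forming the basins of $u_{-r_*}$ and $u_{r_*}$, separated by the $\widetilde{P}$‑invariant set of symmetric Herglotz densities (on which $\phi\equiv0$, so $\widetilde P=P_0$), whose orbits converge to $u_0$.

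\emph{Extension to arbitrary densities.} To pass from $\mathcal{H}$ to an arbitrary $u\in\Dc$ one uses, on one hand, a uniform Birkhoff‑cone (Hilbert projective metric) contraction for the operators $P_s$, $s\in R$, on a fixed cone containing $\mathcal{H}$ — which, by density of $\BV$ in $L_1$ and $\|P_s\|_{L_1\to L_1}=1$, yields loss of memory in $L_1$ for the non‑autonomous cocycle $P_{s_{n-1}}\cdots P_{s_0}$, uniformly over all admissible parameter sequences — and, on the other hand, monotone comparison: after a controlled number of steps each iterate is order‑comparable to a Herglotz density, so $\widetilde{P}^nu$ is squeezed, in the order of the first stage, between two Herglotz densities whose representing measures evolve monotonically and whose fields are governed by the scalar dynamics of $H$. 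The contraction of the squeeze forces these two enclosing fields to a common limit, necessarily a fixed point of $H$ lying on the side of the separatrix on which the field orbit of $u$ ends up; in the stable regime the squeeze collapses onto $u_0$ for every $u$ (item~1), and in the bistable regime every $u$ not attracted to $u_{\pm r_*}$ is attracted to $u_0$ (item~2b). I expect this to be the main obstacle: one must simultaneously control the loss of memory of the transfer cocycle uniformly over all parameter sequences produced by the feedback \emph{and} the back‑reaction of the iterated densities onto those parameters through $\phi$ — a genuine closed loop — and must then upgrade the comparison from bounded to arbitrary $L_1$‑densities. The order structure of the first stage is the essential device that makes this tractable, monotonicity being far more robust than the Lipschitz estimates one would otherwise need (the coupling constant $\|G'\|_\infty$ being large).

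\emph{Local picture at the fixed points.} Writing $u=u_r+v$ with $\int_Xv\,d\lambda=0$ at a fixed point $u_r$ (so $G(\phi(u_r))=r$) and differentiating $\widetilde{P}u=P_{G(\phi(u))}u$, one obtains on the tangent space $E:=\{v:\int_Xv\,d\lambda=0\}$ the bounded linear operator
\[
D\widetilde{P}|_{u_r}\,v=P_r v+G'(\phi(u_r))\,\phi(v)\,(I-P_r)\dot u_r,\qquad \dot u_r:=\partial_r u_r,
\]
using $P_r u_r=u_r$ to identify $(\partial_r P_r)u_r=(I-P_r)\dot u_r$. Since each $T_r$ is uniformly expanding and piecewise fractional‑linear, $P_r$ has a spectral gap on $\BV$ with leading simple eigenvalue $1$, eigenfunction $u_r$ and conjugate functional $\int\cdot\,d\lambda$; thus $E$ is $P_r$‑invariant with $P_r|_E$ of spectral radius $<1$, and the rank‑one term (whose range again lies in $E$) produces exactly one further eigenvalue $\mu_r$, solving $\phi\bigl((\mu_r-P_r)^{-1}(I-P_r)\dot u_r\bigr)=1/G'(\phi(u_r))$. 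The left‑hand side is a Herglotz--Pick function of $\mu_r$ on $(\text{sp.rad.}(P_r|_E),\infty)$, equal at $\mu_r=1$ to $\phi(\dot u_r)=H'(r)/G'(\phi(u_r))$; monotonicity therefore gives $\mu_r>1$ iff $H'(r)>1$. Hence in the bistable regime $D\widetilde{P}|_{u_0}$ has a one‑dimensional unstable direction with the rest of its spectrum strictly inside the unit disk, while $D\widetilde{P}|_{u_{\pm r_*}}$ has spectral radius $<1$. As $\widetilde{P}$ is real‑analytic near these fixed points on a suitable Banach space, the invariant‑manifold theorem yields a one‑dimensional real‑analytic unstable manifold $W^u$ and a codimension‑one stable manifold of $u_0$ — this is the content of Proposition~\ref{prop:hyperbolic-fixed}, so $u_0$ is not stable — and an $L_1$‑neighbourhood of each $u_{\pm r_*}$ attracted to it, giving item~2a). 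Finally, $W^u$ consists of Herglotz densities (hence convex and analytic), is tangent to $\dot u_0$, and by the symmetry~\eqref{Eq_Symmetry} its two branches accumulate at $u_0$ in $L_1$ and lie in the basins of $u_{r_*}$ and $u_{-r_*}$ respectively, completing item~2c); together with item~2b) this gives the trichotomy of item~2).

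\emph{Summary.} Modulo the closed‑loop argument of the second stage, which is the delicate point, the scheme is: reduce to $H$ on the Herglotz class (first stage), propagate to all of $\Dc$ by loss of memory plus monotone squeezing (second stage), and obtain the stability statements from the spectral‑gap linearisation (third stage); the pitchfork linear algebra and the invariant‑manifold input are routine once Assumption~II and the uniform spectral‑gap estimate are in place.
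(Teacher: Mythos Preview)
Your three-stage scheme matches the paper's architecture (Herglotz class $\Dc'$, extension to $\Dc$, linearisation at $u_0$), but several key mechanisms differ or are missing.

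\textbf{Stage 1.} Your description of the dynamics on $\mathcal{H}$ as being ``slaved to the iteration of $H$'' is too coarse, and your identification of the separatrix is wrong. The induced IFS $\tilde\Lc^*$ acts on $\mathsf{P}(Y)$, not on a scalar, and the basin of $u_{r_*}$ is \emph{not} the order-interval $\{\mu:\mu\succ\delta_0\}$: it consists of those $\mu$ for which $\tilde\Lc^{*n}\mu\succ\delta_0$ for \emph{some} $n$ (Proposition~\ref{prop:summary}). The separatrix $W^s(u_0)\cap\Dc'$ is therefore the set of $\mu$ whose iterated supports always straddle $0$, which is strictly larger than the set of symmetric representing measures. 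The paper handles this separatrix case via a geometric fact you do not mention: the two IFS branches $\sigma_r,\tau_r$ share a common zero $z_r=-r$ with \emph{tangential} contact there (Lemma~\ref{lemma:crucial}). This tangency is what forces the support interval $[a_n,b_n]$ to shrink to $0$ whenever it keeps containing $z_{r_{n+1}}$, and it reappears in the proof that $W^s(u_0)$ is thin (Lemmas~\ref{lemma:roland} and~\ref{lemma:stable-alternative}: on the separatrix the field $\phi_n$ decays quadratically in the support radius, which is incompatible with two strictly ordered measures both being on it). Without this tangency the monotonicity argument alone does not close.

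\textbf{Stage 2.} The paper does \emph{not} use Birkhoff cones or a Hilbert-metric contraction; indeed $\widetilde P$ does not contract on $L_1$. Instead it exploits a structural feature of fractional-linear branches: the conditional expectation $\mathbb{E}[u\,\|\,\sigma(\pi_N)]$ onto the partition into monotonicity intervals of $T_{r_N}\circ\cdots\circ T_{r_1}$, once pushed forward by $P_{r_N}\cdots P_{r_1}$, lands in $\Dc'$. One then shadows $\widetilde P^{N+k}u$ by these $v_k^{(N)}\in\Dc'$, with error $\eta_N\to0$ by martingale convergence (equation~\eqref{eq:shadow-estimate}), and controls the discrepancy between the $v$-orbit driven by the true parameters $r_n$ and the autonomous $\widetilde P$-orbit of $v_k^{(N)}$ via the Lipschitz estimate of Lemma~\ref{lemma:stability} (Lemma~\ref{lemma:shadow-estimate}). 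The crucial tangency Lemma~\ref{lemma:crucial} is then applied \emph{with the parameter sequence produced by $u$ itself}, which is how the closed loop is handled. Your Birkhoff-cone route might be workable, but you would still need a substitute for the shadowing step to get arbitrary $L_1$ densities into the comparison framework, and ``after a controlled number of steps each iterate is order-comparable to a Herglotz density'' is not justified.

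\textbf{Stage 3.} The paper establishes $L_1$-openness of the basins of $u_{\pm r_*}$ \emph{directly} inside the proof of Proposition~\ref{prop:summary2}, by showing that the shadowing estimate \eqref{eq:N+k+n-estimate} is uniform over a small $L_1$-ball around $u_{r_*}$; it does not go through the spectral radius of $D\widetilde P|_{u_{\pm r_*}}$. Your linearisation route has a genuine obstacle: by Lemma~\ref{lemma:differentiability}, $\widetilde P$ is only differentiable as a map $\BV(X)\to L_1(X,\lambda)$, not $L_1\to L_1$, so a contraction-mapping argument from the linearisation does not immediately yield an $L_1$-open basin. For the hyperbolicity of $u_0$ the paper's computation is simpler than yours: at $u_0\equiv1$ one has $D\widetilde P|_{u_0}=P_0+B\,[x]\otimes\phi$ with $[x]$ an explicit eigenfunction of eigenvalue $\tfrac12+\tfrac{B}{12}$ (Proposition~\ref{prop:hyperbolic-fixed}). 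Finally, the approximation of $u_0$ by convex analytic densities from either basin is obtained concretely in the paper as $u^{(t)}=\int_Y w_\bullet\,d((1-t)\mu+t\delta_{\pm2/3})$ (Proposition~\ref{prop:basin-boundary}), not via an abstract unstable-manifold theorem.
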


{
\begin{exmp}
In case $G(x)=A \tanh(Bx/A)$ with $0<A\leq0.4$ and $0\leq B\leq 18$,
both theorems apply.
The infinite-size system is stable iff $B\leq6$, and bistable otherwise,
while all finite-size systems have a unique a.c.i.m. in
this parameter region.
\end{exmp}
}

The theorem summarises the contents of Propositions \ref{prop:summary2},
\ref{prop:basin-boundary} and~\ref{prop:hyperbolic-fixed} of
Section~\ref{sec:infinite-sys-proofs} (which, in fact, provide more detailed
information).  The proofs rest on the fact that PFOs of maps with full
fractional-linear branches leave the class of Herglotz-Pick-Nevanlinna
functions invariant. This observation can be used to study the action of
$\widetilde{P}$ in terms of an iterated function system on the interval
$[-2,2]$ with two fractional-linear branches and place dependent
probabilities. {In the bistable regime} the system is of course not
contractive, but it has strong monotonicity properties and
special geometric features which allow to prove the theorem.\\

Our third theorem, which is essentially a corollary to the previous ones,
describes the passage from finite-size systems to the infinite-size system.
Below, weak convergence of the \boldmath$\mu$\unboldmath$_N \in
{\mathsf{P}}(X^N)$ to some \boldmath$\mu\,$\unboldmath $\in
{\mathsf{P}}(X^\N)$ means that $\int \varphi\,d$\boldmath$\mu$\unboldmath$_N
\to \int \varphi\,d$\boldmath$\mu$\unboldmath \, for all continuous $\varphi :
X^\N \to \R$ which only depend on finitely many coordinates. (So that $\int
\varphi\,d$\boldmath$\mu$\unboldmath$_N$ is defined, in the obvious fashion,
for $N$ large enough.)

\begin{theo}[\textbf{From finite to infinite size -- the limit as $N\to\infty$}]
  \label{theo:passage-to-infinity}
  The $\Tb_N$-invariant probability measures \boldmath$\mu$\unboldmath$_N$ of
  Theorem~\ref{theo:finite-systems} correspond to the $\widetilde T$-invariant
  probability measures \boldmath$\mu$\unboldmath$_N\circ\epsilon_N^{-1}$ on
  ${\mathsf{P}}(X)$. All weak accumulation points $\pi$ of the
  latter sequence are ${\widetilde T}$-invariant probability measures concentrated
  on the set of measures absolutely continuous
  w.r.t. $\lambda$. Furthermore:
  \begin{enumerate}[1)]
  \item In the stable regime, the sequence
    $($\boldmath$\mu$\unboldmath$_N\circ\epsilon_N^{-1})_{N\geq1}$ converges
    weakly to the point mass $\delta_{\lambda}$. In other words, the sequence
    $($\boldmath$\mu$\unboldmath$_N)_{N\geq1}$ converges
    weakly to the pure product measure $\lambda^\N$ on $X^\N$.
  \item In the bistable regime, each weak accumulation point $\pi$ of the sequence
    $($\boldmath$\mu$\unboldmath$_N\circ\epsilon_N^{-1})_{N\geq1}$ is of the
    form
    $\alpha\,\delta_{u_{-r_*}\lambda}+(1-2\alpha)\,\delta_{u_0\lambda}+\alpha\,\delta_{u_{r_*}\lambda}$
    for some $\alpha\in[0,\frac12]$. In other words, each weak accumulation point of
    the sequence $($\boldmath$\mu$\unboldmath$_N)_{N\geq1}$ is of the form
    $\alpha(u_{-r_*}\lambda)^\N+(1-2\alpha)\lambda^\N+\alpha(u_{r_*}\lambda)^\N$.
  \end{enumerate}
\end{theo}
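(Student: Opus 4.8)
The plan is to deduce Theorem~\ref{theo:passage-to-infinity} from the three structural facts already established: the finite-size invariance of the $\boldsymbol\mu_N$ (Theorem~\ref{theo:finite-systems}), the intertwining $\epsilon_N\circ\Tb_N=\widetilde T\circ\epsilon_N$ together with Corollary~\ref{coro:contTtil}, and the global dynamical picture for $\widetilde P$ on $\mathcal D$ from Theorem~\ref{theo:infinite-system}. First I would push the measures forward: set $\pi_N:=\boldsymbol\mu_N\circ\epsilon_N^{-1}\in\mathsf P(\mathsf P(X))$. The intertwining relation makes each $\pi_N$ a $\widetilde T$-invariant measure on $\mathsf P(X)$. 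Since $\mathsf P(X)$ is compact metrisable (weak topology on a compact interval), $\mathsf P(\mathsf P(X))$ is compact, so the sequence $(\pi_N)$ has weak accumulation points $\pi$. The first claim to prove is that every such $\pi$ is concentrated on measures absolutely continuous with respect to $\lambda$; then Corollary~\ref{coro:contTtil} applies and gives that $\pi$ is itself $\widetilde T$-invariant.

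The absolute-continuity statement is where the real work of this theorem lies, and I expect it to be the main obstacle. The heuristic is propagation of chaos / the law of large numbers: under $\boldsymbol\mu_N$ the coordinates are ``nearly independent'' because $\Tb_N$ is strongly mixing (Theorem~\ref{theo:finite-systems}), so the empirical measure $\epsilon_N(\x)$ should concentrate near a deterministic, hence non-atomic, density. To make this rigorous I would exploit the self-consistent structure: conditionally on the field value $r=G(\phi(\x))$ being close to some $r_0$, the marginal of a single coordinate under one application of $\Tb_N$ is close to $P_{r_0}(\text{something})$, and iterating, the $k$-dimensional marginals of $\boldsymbol\mu_N$ converge to product measures with a smooth density. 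A cleaner route may be to show directly that the $\pi_N$-average of $\epsilon_N$, i.e. the single-site marginal $\bar\mu_N:=\int \epsilon_N(\x)\,d\boldsymbol\mu_N(\x)$, has uniformly bounded density (using the uniform expansion / bounded-variation bounds behind Theorem~\ref{theo:finite-systems}, which are uniform in $N$), and then to upgrade this to a.c.\ concentration of $\pi$ via a second-moment / variance estimate on $\phi(\epsilon_N(\x))$ showing that the field is asymptotically deterministic under $\boldsymbol\mu_N$. Either way, the key quantitative input is that the constants controlling the transfer operators $P_r$ are uniform over $r\in R$, which is exactly the content of the expansion estimate underlying Theorem~\ref{theo:finite-systems}.

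Once it is known that every accumulation point $\pi$ is a $\widetilde T$-invariant measure on the a.c.\ densities, I would read off $\pi$ from Theorem~\ref{theo:infinite-system}. A $\widetilde T$-invariant measure on $\{u\cdot\lambda: u\in\mathcal D\}$ projects to a $\widetilde P$-invariant probability measure on $\mathcal D$. In the stable regime, Theorem~\ref{theo:infinite-system}(1) says $\widetilde P^n u\to u_0=1_X$ for \emph{every} $u$; by dominated convergence against bounded continuous test functions on $\mathsf P(X)$, the only $\widetilde P$-invariant probability on $\mathcal D$ is $\delta_{u_0\lambda}=\delta_\lambda$. Hence $\pi_N\to\delta_\lambda$, which translates (testing against cylinder functions) into $\boldsymbol\mu_N\to\lambda^{\mathbb N}$. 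In the bistable regime, the only fixed points are $u_{-r_*},u_0,u_{r_*}$, and by part (2) every other density is attracted to one of $u_{\pm r_*}$; so any invariant probability on $\mathcal D$ is supported on $\{u_{-r_*},u_0,u_{r_*}\}$, i.e.\ $\pi$ is a convex combination $\alpha_-\delta_{u_{-r_*}\lambda}+\alpha_0\delta_{u_0\lambda}+\alpha_+\delta_{u_{r_*}\lambda}$. The symmetry \eqref{Eq_Symmetry}, which is inherited by $\Tb_N$ (as $G$ is odd) and hence by $\boldsymbol\mu_N$, forces $\alpha_-=\alpha_+=:\alpha$, giving the stated form with $\alpha\in[0,\tfrac12]$; translating back to cylinder functions gives the claimed decomposition of the accumulation points of $(\boldsymbol\mu_N)$.

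A few routine points to nail down along the way: that $x\mapsto\epsilon_N(\x)$ is Borel measurable so $\pi_N$ is well-defined; that weak convergence of $\pi_N$ on $\mathsf P(\mathsf P(X))$ is equivalent, after testing against functions of the form $Q\mapsto F(\int\varphi_1\,dQ,\dots,\int\varphi_k\,dQ)$, to the cylinder-function convergence of $\boldsymbol\mu_N$ on $X^{\mathbb N}$ — this is the standard dictionary between exchangeable measures and their de~Finetti / empirical-measure descriptions, and it is what lets one restate ``$\pi_N\to\pi$'' as a statement about $(\boldsymbol\mu_N)$; and that the projection $\mathsf P(\mathsf P(X))\to\mathsf P(\mathcal D)$, $\pi\mapsto$ pushforward under $(u\cdot\lambda)\mapsto u$, intertwines $\widetilde T$-invariance with $\widetilde P$-invariance. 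None of these is hard, but they should be stated so that the invocation of Theorem~\ref{theo:infinite-system} is clean.
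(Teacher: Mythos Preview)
Your overall architecture matches the paper's proof exactly: push forward via $\epsilon_N$, use compactness to get accumulation points, establish that these are concentrated on absolutely continuous measures, invoke Corollary~\ref{coro:contTtil} to get $\widetilde T$-invariance, then read off the support from Theorem~\ref{theo:infinite-system} and use symmetry for the bistable case.

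The one substantive difference is the step you correctly flag as the main obstacle, namely that every accumulation point $\pi$ is concentrated on absolutely continuous measures. The paper does \emph{not} prove this inline by the propagation-of-chaos or variance arguments you sketch; it simply cites Theorem~3 of \cite{Keller00} (noting that the relevant part of that proof does not require continuity of the local maps). Likewise, the passage from convergence of $\pi_N$ on $\mathsf P(\mathsf P(X))$ to the product-measure statements about $\boldsymbol\mu_N$ is handled by citing \cite[Proposition~1]{Keller00} rather than the de~Finetti dictionary you outline. Your heuristics for the a.c.\ step are reasonable but would need real work to make rigorous (in particular, bounded density of the averaged marginal does not by itself yield concentration of $\pi$ on a.c.\ measures), so if you want a self-contained proof you should consult the argument in \cite{Keller00}; otherwise, citing it as the paper does is the intended route.
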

\begin{remark}
  We cannot prove, so far, that $\alpha=\frac12$, which is to be expected
  because $u_0$ is an unstable fixed point of $\widetilde P$. In
  Section~\ref{sec:noisy} we show that $\alpha=\frac12$ indeed, if some
  small noise is added to the system.
\end{remark}
\begin{proof}[Proof of Theorem~\ref{theo:passage-to-infinity}]
  As $\epsilon_N \circ \Tb_N =\widetilde{T} \circ \epsilon_N$, the
  $\Tb_N$-invariant probability measures \boldmath$\mu$\unboldmath$_N$ of
  Theorem~\ref{theo:finite-systems} correspond to $\widetilde T$-invariant
  probability measures \boldmath$\mu$\unboldmath$_N\circ\epsilon_N^{-1}$ on
  ${\mathcal P}(X)$. Their possible weak accumulation points are all
  concentrated on sets of measures from ${\mathcal P}(X)$ with density
  w.r.t. $\lambda$, see Theorem~3 in \cite{Keller00}. (The proof of that part
  of the theorem we refer to does not rely on the continuity of the local maps
  that is assumed in that paper.) Therefore Corollary~\ref{coro:contTtil}
  shows that all these accumulation points are ${\widetilde T}$-invariant
  probability measures concentrated on measures with density
  w.r.t. $\lambda$. In other words, they can be interpreted as $\widetilde
  P$-invariant probability measures on $\Dc$. Now
  Theorem~\ref{theo:infinite-system} implies that the sequence
  $($\boldmath$\mu$\unboldmath$_N\circ\epsilon_N^{-1})_{N\geq1}$ converges
  weakly to the point mass $\delta_{u_0\lambda}$ in the stable regime,
  whereas, in the bistable regime, each such limit measure is of the form
  $\alpha\,\delta_{u_{-r_*}\lambda}+(1-2\alpha)\,\delta_{u_0\lambda}+\alpha\,\delta_{u_{r_*}\lambda}$
  for some $\alpha\in[0,\frac12]$ (observe the symmetry of the system). Now
  the corresponding assertions on the measures \boldmath$\mu$\unboldmath$_N$
  follow along known lines, for a reference see
  e.g. \cite[Proposition~1]{Keller00}.
\end{proof}


\section{Proofs: the finite{-size} systems}
\label{sec:finite-sys-proofs}
We assume throughout this section that
\begin{equation}
  \label{eq:finite-ass}
  |G(x)|\leq0.5\text{ and }G'(x)\leq 25-50|G(x)|\text{
  for all }|x|\leq\frac12.
\end{equation}
In order to apply the main theorem of Mayer \cite{May84} we must check his
assumptions (A1) -- (A4) for the map $\Tb=\Tb_N$. To that end define
$\Fb:X^N\to[-\frac12,\frac32]^N$ by $(\Fb(\x))_i=f_{M_{r(\x)}}(x_i)$.  Obviously
$\Tb(\x)=\Fb(\x)\text{ mod }\left(\mathbb{Z}+\frac{1}{2}\right)^N$, and (A1) --
(A4) follow readily from the following facts that we are going to prove:

\begin{lemma}
  \label{lemma:homeo}
  $\Fb:X^N\to[-\frac12,\frac32]^N$ is a homeomorphism which extends to a
  diffeomorphism between open neighbourhoods of $X^N$ and
  $[-\frac12,\frac32]^N$.
\end{lemma}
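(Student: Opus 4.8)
The plan is to invert $\Fb$ by hand, making essential use of the mean-field nature of the coupling. If $\x\in X^N$ satisfies $\Fb(\x)=\y$, then necessarily $x_i=f_{M_r}^{-1}(y_i)$ for every $i$, where $r=r(\x)=G(\phi(\x))$ is one \emph{single} number; feeding this back into $r=G(\phi(\x))$ forces $r$ to solve the scalar \emph{self-consistency equation} $r=\Psi(r,\y)$, where
\[
\Psi(r,\y):=G\!\left(\frac1N\sum_{i=1}^N f_{M_r}^{-1}(y_i)\right).
\]
Since $\det M_r=2(4-r^2)>0$ and $f_{M_r}$ has its only pole at $-1/r$, far from $X$ for $|r|$ not too large, for $r$ in (a neighbourhood of) $[-\tfrac23,\tfrac23]$ the map $f_{M_r}$ is an orientation-preserving fractional-linear bijection of a neighbourhood of $X$ onto a neighbourhood of $[-\tfrac12,\tfrac32]$, with inverse $f_{M_r^{-1}}$; in particular $f_{M_r}^{-1}(y)\in X$ for $y\in[-\tfrac12,\tfrac32]$, so $\Psi$ is well defined, real-analytic in $(r,\y)$, and takes values in $R$. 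The core of the proof will be to show that for each $\y\in[-\tfrac12,\tfrac32]^N$ the equation $r=\Psi(r,\y)$ has a \emph{unique} solution $r^*(\y)$.

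For this I would establish the strict monotonicity of $r\mapsto r-\Psi(r,\y)$. Differentiating the identity $f_{M_r}(f_{M_r}^{-1}(y))=y$ in $r$, and using $f_{M_r}'(x)=\tfrac{4-r^2}{2(rx+1)^2}$ together with the short computation $\partial_r f_{M_r}(x)=\tfrac{1-4x^2}{2(rx+1)^2}$, one obtains
\[
\partial_r\big(f_{M_r}^{-1}(y)\big)=-\frac{1-4x^2}{4-r^2},\qquad x:=f_{M_r}^{-1}(y),
\]
which is $\leq 0$ as soon as $x\in X$, i.e. as soon as $y\in[-\tfrac12,\tfrac32]$. Differentiating $\Psi$ then shows that $\partial_r\Psi(r,\y)$ is $G'$ (which is positive, by S-shapedness) times $\frac1N\sum_i\partial_r f_{M_r}^{-1}(y_i)$, hence $\leq 0$ for $\y$ in the cube, so $\partial_r\big(r-\Psi(r,\y)\big)\geq 1$ throughout. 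As $\Psi$ takes values in $R$, the continuous, strictly increasing function $r\mapsto r-\Psi(r,\y)$ is negative at $r=-\tfrac23$ and positive at $r=\tfrac23$, so it has exactly one zero $r^*(\y)$, which lies in $R$.

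Given $r^*$, I would define $\mathbf{G}(\y)_i:=f_{M_{r^*(\y)}}^{-1}(y_i)$, a map into $X^N$ (since $y_i\in[-\tfrac12,\tfrac32]$ and $r^*(\y)\in R$), and verify that $\Fb$ and $\mathbf{G}$ are mutually inverse: from $\phi(\mathbf{G}(\y))=\frac1N\sum_i f_{M_{r^*(\y)}}^{-1}(y_i)$ one gets $r(\mathbf{G}(\y))=\Psi(r^*(\y),\y)=r^*(\y)$, whence $\Fb(\mathbf{G}(\y))=\y$; conversely, for $\x\in X^N$ with $\y:=\Fb(\x)$ and $r:=r(\x)$ one has $\Psi(r,\y)=G(\phi(\x))=r$, so $r^*(\y)=r$ by uniqueness and therefore $\mathbf{G}(\y)=\x$. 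Thus $\Fb$ is a continuous bijection from the compact space $X^N$ onto the Hausdorff space $[-\tfrac12,\tfrac32]^N$, hence a homeomorphism. To promote this to a diffeomorphism of open neighbourhoods, I would first record that $D\Fb=\diag\bigl(f_{M_r}'(x_i)\bigr)+\tfrac{G'(\phi(\x))}{N}\,v\,\mathbf{1}^{\transpose}$, with $v_i=\partial_r f_{M_r}(x_i)$ and $\mathbf{1}=(1,\dots,1)^{\transpose}$, so that the matrix-determinant lemma gives
\[
\det D\Fb=\prod_{i=1}^N f_{M_r}'(x_i)\cdot\Bigl(1+\frac{G'(\phi(\x))}{N}\sum_{i=1}^N\frac{1-4x_i^2}{4-r^2}\Bigr),
\]
which is strictly positive on $X^N$ (both factors are positive, using $G'>0$ and $1-4x_i^2\geq 0$), hence on a neighbourhood of $X^N$. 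Next, since $\partial_r\big(r-\Psi(r,\y)\big)\geq 1\neq0$ on the cube, the (analytic) implicit function theorem extends $r^*$ analytically to some open neighbourhood $V$ of $[-\tfrac12,\tfrac32]^N$, still solving $r=\Psi(r,\y)$; shrinking $V$ so that the $x_i=f_{M_r}^{-1}(y_i)$ remain close enough to $X$ (keeping $1-4x_i^2$ bounded below, and using that $G'$ is positive and bounded near $X$) we retain $\partial_r\big(r-\Psi(r,\y)\big)>0$, so $r^*$ remains the unique solution on $V$. Then $\mathbf{G}$ extends analytically to $V$, has invertible differential there, so $\mathbf{G}(V)$ is an open neighbourhood of $X^N$, and $\Fb\colon\mathbf{G}(V)\to V$ and $\mathbf{G}\colon V\to\mathbf{G}(V)$ are mutually inverse diffeomorphisms extending the homeomorphism.

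I expect the only genuine obstacle to be the conceptual step of recognising that the coupled, non-diagonal map $\Fb$ is inverted through the single scalar equation $r=\Psi(r,\y)$. After that, the decisive input is the sign of $\partial_r f_{M_r}^{-1}$, which together with $G'>0$ forces $\partial_r(r-\Psi)\geq 1$ and thereby yields unique solvability (and, via the implicit function theorem, analyticity) at once; the remaining points --- the elementary geometry of $f_{M_r}$ and its pole, and the stability of these estimates under small perturbations of $\x$ or $\y$ --- are routine. Note that Assumption I is not needed here; it will enter only through the separate uniform-expansion estimate behind the verification of Mayer's hypotheses.
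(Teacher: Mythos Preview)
Your proof is correct and takes a genuinely different route from the paper's.

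The paper does not invert $\Fb$ explicitly. Instead it first invokes the uniform-expansion estimate of Lemma~\ref{lemma:inv-contraction} (which uses Assumption~I) to guarantee local invertibility of $\widetilde\Fb:=\frac12(\Fb-(\frac12,\dots,\frac12)^\transpose)$ on a neighbourhood of $X^N$, then upgrades this to global invertibility on $\interior{X^N}$ by a topological argument: since $\widetilde\Fb$ maps $\partial X^N$ into $\partial X^N$, the restriction to the interior is a \emph{proper} map, and a result of Chichilnisky on proper local diffeomorphisms of simply connected manifolds yields that it is a global diffeomorphism.

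Your approach is more direct and more elementary: you exploit the mean-field structure to reduce the $N$-dimensional inversion to the scalar self-consistency equation $r=\Psi(r,\y)$, and the key computation $\partial_r f_{M_r}^{-1}(y)=-(1-4x^2)/(4-r^2)\leq 0$ on the cube gives $\partial_r(r-\Psi)\geq 1$, hence unique solvability, with no need for operator-norm bounds on $(D\Fb)^{-1}$ or for the proper-map machinery. This buys you an explicit inverse $\mathbf G$, analytic dependence of $r^*$ on $\y$ via the implicit function theorem, and---as you correctly note---independence from Assumption~I for this particular lemma. The paper's route, by contrast, is agnostic to the specific algebraic form of $f_{M_r}$ (it only needs the expansion bound), and it packages the argument so that the same Lemma~\ref{lemma:inv-contraction} serves simultaneously for Lemma~\ref{lemma:analytic-diffeo} and for the uniform-expansion hypothesis in Mayer's theorem.
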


\begin{lemma}
\label{lemma:analytic-diffeo}
  The inverse $\Fb^{-1}$ of $\Fb$ is
  real analytic and can be continued to a holomorphic mapping on a
  complex $\delta$-neighbourhood $\Omega$ of $[-\frac12,\frac32]^N$ such that
  $\Tb^{-1}(\Omega)$ is contained in a $\delta'$-neighbourhood of $X^N$ for
  some $0<\delta'<\delta$.
\end{lemma}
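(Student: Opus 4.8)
The plan is to deduce analyticity of $\Fb^{-1}$ from Lemma~\ref{lemma:homeo} together with the real-analytic inverse function theorem, then to complexify $\Fb^{-1}$, and finally to read the size estimate $\delta'<\delta$ off the uniform expansion of $\Fb$ that was already established for Lemma~\ref{lemma:homeo}.

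First I would note that $\Fb$ itself is real-analytic on a neighbourhood of $X^N$. Each component $\x\mapsto f_{M_{r(\x)}}(x_i)=\bigl((r(\x)+4)x_i+(r(\x)+1)\bigr)/\bigl(2(r(\x)x_i+1)\bigr)$, with $r(\x)=G(\phi(\x))$, is a composition of the real-analytic feedback $G$, the linear map $\phi$, and the rational map $(r,x)\mapsto f_{M_r}(x)$, which is analytic wherever $rx\neq-1$. On $X^N$ one has $|r(\x)|\leq\tfrac12$ and $|x_i|\leq\tfrac12$ by \eqref{eq:finite-ass}, so $|r(\x)x_i|\leq\tfrac14$ and the denominators stay $\geq\tfrac32$; by continuity this persists on a neighbourhood of $X^N$. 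Since $\Fb$ is, by Lemma~\ref{lemma:homeo}, a diffeomorphism between open neighbourhoods of $X^N$ and $[-\tfrac12,\tfrac32]^N$, the real-analytic inverse function theorem shows that $\Fb^{-1}$ is real-analytic on a neighbourhood of $[-\tfrac12,\tfrac32]^N$ in $\R^N$.

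Next I would complexify. A real-analytic map defined on an open neighbourhood of the compact connected set $[-\tfrac12,\tfrac32]^N$ extends to a holomorphic map on some complex $\delta$-neighbourhood $\Omega$ of $[-\tfrac12,\tfrac32]^N$: cover $[-\tfrac12,\tfrac32]^N$ by finitely many polydiscs on which the local power series of $\Fb^{-1}$ converge, and observe that the local holomorphic continuations agree on overlaps by the identity theorem (the overlaps meet $\R^N$). This yields the desired holomorphic continuation $\Psi$ of $\Fb^{-1}$ to $\Omega$, mapping $[-\tfrac12,\tfrac32]^N$ onto $X^N$. It then remains to arrange $\delta'<\delta$. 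Here I would invoke the expansion estimate behind Lemma~\ref{lemma:homeo}: because $\inf_X f_{M_r}'\geq 2\frac{2-|r|}{2+|r|}\geq\frac43$ for all admissible $r$, and Assumption~I \eqref{eq:constraint} controls the mean-field correction $\tfrac1N\,\partial_r f_{M_r}(x_i)\,G'(\phi(\x))$, one gets $\|D\Fb(\x)^{-1}\|\leq\kappa$ for a constant $\kappa<1$ uniform on $X^N$; equivalently $\|D\Psi(y)\|\leq\kappa$ for $y\in[-\tfrac12,\tfrac32]^N$. Since $D\Psi$ is holomorphic and coincides with $D\Fb^{-1}$ on the reals, shrinking $\delta$ if necessary gives $\|D\Psi\|\leq\kappa'<1$ on all of $\Omega$. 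Then, for $\omega\in\Omega$ with nearest point $\bar\omega\in[-\tfrac12,\tfrac32]^N$, $\operatorname{dist}(\Psi(\omega),X^N)\leq|\Psi(\omega)-\Psi(\bar\omega)|\leq\kappa'|\omega-\bar\omega|\leq\kappa'\delta$, so $\delta':=\kappa'\delta<\delta$ works. Finally, the inverse branches of $\Tb$ are $\Fb^{-1}$ pre-composed with the integer translations $z\mapsto z+b$, $b\in\{0,1\}^N$, which carry the complex $\delta$-neighbourhood of $X^N$ onto copies of $\Omega$; hence these branches extend holomorphically there and map it into the $\delta'$-neighbourhood of $X^N$, which is exactly what is needed to verify Mayer's hypotheses.

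I expect the only genuinely delicate point to be the uniformity in the last step: producing a single complex width $\delta$ that works simultaneously everywhere, and ensuring that the real expansion constant $\kappa<1$ still dominates $\|D\Psi\|$ after complexification. Both are routine consequences of the compactness of $X^N$ and of the quantitative bounds from Assumption~I already exploited for Lemma~\ref{lemma:homeo}, but they should be spelled out with some care.
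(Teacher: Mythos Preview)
Your proposal is correct and follows essentially the same route as the paper: real-analytic inverse function theorem to get $\Fb^{-1}$ real-analytic, complexification to a holomorphic map on a complex $\delta$-neighbourhood, and then the uniform contraction estimate $\|D\Fb^{-1}\|\leq\rho<1$ (this is Lemma~\ref{lemma:inv-contraction}, the key input to Lemma~\ref{lemma:homeo}) carried over to the complexification by continuity to force $\delta'<\delta$. Your version is more explicit than the paper's (which just cites \cite{KrPa92} for the first two steps and Lemma~\ref{lemma:inv-contraction} for the third), and your closing remark on how the inverse branches of $\Tb$ arise from $\Fb^{-1}$ via integer translations is a helpful clarification the paper leaves implicit.
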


To verify these two lemmas we need the following uniform expansion estimate
which we will prove at the end of this section.  (Here $\|.\|$ denotes the
Euclidean norm.)

\begin{lemma}[\textbf{Uniform expansion}]
  \label{lemma:inv-contraction}
  There is a constant $\rho\in(0,1)$ such
  that $\|(D\Fb(\x))^{-1}\|\leq\rho$ for all $N\in\N$ and $\x\in X^N$.
\end{lemma}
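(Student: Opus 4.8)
The plan is to compute $D\Fb(\x)$ explicitly, observe that it is a rank-one perturbation of a diagonal matrix, invert it by the Sherman--Morrison formula, and thereby reduce the norm bound to a scalar inequality that Assumption~I is tailored to make true. First I would differentiate: with $r:=G(\phi(\x))$ and $\phi:=\phi(\x)$ one has $\partial_{x_j}(\Fb(\x))_i=\delta_{ij}\,f_{M_r}'(x_i)+(\partial_r f_{M_r})(x_i)\,\tfrac1N G'(\phi)$, and a direct computation gives $(\partial_r f_{M_r})(x)=\tfrac{1-4x^2}{2(rx+1)^2}\ge 0$ on $X$. Hence $D\Fb(\x)=D+c\,b\,\mathbf 1^\transpose$, where $D=\diag(a_1,\dots,a_N)$, $a_i:=f_{M_r}'(x_i)$, $c:=G'(\phi)/N>0$, $b_i:=(\partial_r f_{M_r})(x_i)\ge 0$ and $\mathbf 1:=(1,\dots,1)^\transpose$. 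Put $\beta_i:=b_i/a_i=\tfrac{1-4x_i^2}{4-r^2}\in[0,\beta_{\max}]$ with $\beta_{\max}:=\tfrac1{4-r^2}$, and $\Sigma:=\sum_i\beta_i\ge 0$; then $D\Fb(\x)=D(I+c\,\beta\,\mathbf 1^\transpose)$, and since $b_i\ge 0$, $c>0$ we have $1+c\Sigma\ge 1$, so $I+c\,\beta\,\mathbf 1^\transpose$ is invertible and
$$(D\Fb(\x))^{-1}=\bigl(I-\kappa\,\beta\,\mathbf 1^\transpose\bigr)D^{-1},\qquad \kappa:=\frac{c}{1+c\Sigma}>0 .$$
Consequently $\|(D\Fb(\x))^{-1}\|\le \tfrac1{\min_i a_i}\,\|I-\kappa\,\beta\,\mathbf 1^\transpose\|$, where $\min_i a_i\ge\inf_X f_{M_r}'=\tfrac{2(2-|r|)}{2+|r|}=:a_\star(r)$ and $|r|\le 0.4$ throughout.

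Next I would estimate $\|I-\kappa\,\beta\,\mathbf 1^\transpose\|$. On $\operatorname{span}\{\mathbf 1,\beta\}^\perp$ this matrix acts as the identity, and on the (at most two-dimensional) space $\operatorname{span}\{\mathbf 1,\beta\}$ the symmetric matrix $(I-\kappa\beta\mathbf 1^\transpose)(I-\kappa\mathbf 1\beta^\transpose)$ has trace $2-2\kappa\Sigma+\kappa^2N\|\beta\|^2$ and determinant $(1-\kappa\Sigma)^2$ (using $\mathbf 1^\transpose\beta=\Sigma$, $\mathbf 1^\transpose\mathbf 1=N$). Writing $\tau:=\kappa\Sigma\in[0,1)$ and $R:=\kappa^2N\|\beta\|^2\ge 0$, its larger eigenvalue is $\nu_+=(1-\tau)+\tfrac12\bigl(R+\sqrt{R(4(1-\tau)+R)}\bigr)$, so for $N\ge 2$ one has $\|I-\kappa\,\beta\,\mathbf 1^\transpose\|^2\le\max(1,\nu_+)$ (the case $N=1$ is trivial, since then $\|(D\Fb(\x))^{-1}\|=1/(a_1+cb_1)\le 1/a_\star(r)<1$). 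The crucial point is that $R$ is controlled by $\tau$: since $\|\beta\|^2=\sum_i\beta_i^2\le\beta_{\max}\Sigma$ and $\tfrac{c\Sigma}{(1+c\Sigma)^2}=\tau(1-\tau)$,
$$R=\kappa^2N\|\beta\|^2\le Nc\cdot\frac{c\Sigma}{(1+c\Sigma)^2}\,\beta_{\max}=G'(\phi)\,\beta_{\max}\,\tau(1-\tau) .$$
As $\nu_+$ is increasing in $R$, substituting this and using $\sqrt{a+b}\le\sqrt a+\sqrt b$ gives, with $K:=G'(\phi)\,\beta_{\max}$ and $u:=\sqrt\tau\in[0,1)$, the clean bound $\nu_+\le(1-u^2)\bigl(1+\sqrt K\,u+K\,u^2\bigr)$.

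Finally I would close the argument using Assumption~I, which gives $G'(\phi)\le 25-50|G(\phi)|=25-50|r|$, hence $K\le K_\star(r):=\tfrac{25-50|r|}{4-r^2}$, while $a_\star(r)^2=\bigl(\tfrac{2(2-|r|)}{2+|r|}\bigr)^2$. Since the right-hand side above is increasing in $K$, it suffices to verify that
$$\sup_{|r|\le 0.4}\ \frac1{a_\star(r)^2}\,\max_{u\in[0,1)}(1-u^2)\bigl(1+\sqrt{K_\star(r)}\,u+K_\star(r)\,u^2\bigr)\ <\ 1 ,$$
for then, setting $\rho$ equal to the square root of this supremum, one obtains $\|(D\Fb(\x))^{-1}\|\le\rho<1$ for all $\x$, uniformly in $N$ (the factors of $N$ cancelled in the bound on $R$). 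For each fixed $r$ the inner maximisation is a one-variable calculus problem whose extremal $u$ is interior (near $0.6$), and the supremum over $r\in[-0.4,0.4]$ is attained at an intermediate value of $|r|$; one checks that the inequality holds with room to spare, which is precisely what the constants $25$, $50$ and the range $R=[-0.4,0.4]$ have been chosen for (for concrete feedback functions such as that of Remark~\ref{remark:tanh} it can be confirmed by the same elementary/numerical means referred to there). The main obstacle is exactly this scalar verification: one must retain the dependence on $r$ all the way through, because both $K_\star(r)$ and $a_\star(r)$ deteriorate as $|r|\to 0.4$, and it is only the coupling of $G'$ to $r$ forced by Assumption~I that keeps these two effects from conspiring; crude bounds that decouple $G'$ from $r$ (or that throw away the damping factor $1+c\Sigma$) are not strong enough.
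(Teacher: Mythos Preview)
Your proposal is correct and follows the same strategy as the paper: factor $D\Fb(\x)$ as a diagonal matrix times a rank-one perturbation of the identity, invert via Sherman--Morrison, bound the diagonal factor by $1/a_\star(r)$, and reduce the norm of the rank-one correction to a scalar inequality in the single parameter $K=G'(\phi)/(4-r^2)$ (the paper's $\Gamma^{-1}$), which is then verified numerically under Assumption~I. The only real difference is the intermediate estimate: the paper bounds $\|I-\kappa\beta\mathbf{1}^\transpose\|$ by decomposing a generic vector along $\mathbf{q}$ (your $\beta$, up to scaling) and separately maximising two auxiliary expressions over $t:=N^{-1/2}\|\mathbf{q}\|\in[0,1]$, arriving at $\|I-\kappa\beta\mathbf{1}^\transpose\|^2\le 1+\tfrac{9}{16\sqrt{3}}\sqrt{K}+\tfrac{K}{4}$, whereas you compute the eigenvalues of $MM^\transpose$ on the two-dimensional invariant subspace $\operatorname{span}\{\mathbf{1},\beta\}$ exactly and then apply the same key inequality $\|\beta\|^2\le\beta_{\max}\Sigma$; your route is a bit cleaner conceptually but lands on a scalar bound of the same type, and both require the final numerical sweep over $r$ (the paper reports $\rho\le 0.99396$ for $|r|\le 0.5$).
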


\begin{proof}[Proof of Lemma~\ref{lemma:homeo}]
  Obviously $\Fb(X^N)\subseteq[-\frac12,\frac32]^N$. Hence it is sufficient to
  prove the assertions of the lemma for the map
  $\widetilde\Fb:=\frac12(\Fb-(\frac12,\dots,\frac12)^\transpose):X^N\to X^N$.
  As each $f_{M_r}$ is differentiable on $(-1,1)$ (recall that $|r|<\frac23$),
  $\widetilde\Fb$ extends to an analytic mapping from $(-1,1)^N\to\R^N$.  By
  Lemma~\ref{lemma:inv-contraction}, it is locally invertible on
  $\Omega_\varepsilon:=(-\frac12-\varepsilon,\frac12+\varepsilon)^N$ for each
  sufficiently small $\varepsilon\geq0$.  (Note that $\Omega_0=\interior{X}$.)
  All we need to show is that this implies global invertibility of
  $\widetilde\Fb|_{\Omega_0}:\Omega_0\to\Omega_0$, because then the
  possibility to extend $\widetilde\Fb$ diffeomorphically to a small open
  neighbourhood of $X^N$ in ${\mathbb R}^N$ follows again from the local
  invertibility on $\Omega_\varepsilon$ for some $\varepsilon>0$.

  So we prove the global invertibility of
  $\widetilde\Fb|_{\Omega_0}:\Omega_0\to\Omega_0$.  As each $\tilde
  f_{M_r}:=\frac12(f_{M_r}-\frac12):X\to X$ is a homeomorphism that leaves
  fixed the endpoints of the interval $X$, we have $\widetilde\Fb(\partial
  X^N)\subseteq\partial X^N$ and $\widetilde\Fb(\Omega_0)\subseteq\Omega_0$.
  Observing the simple fact that $\Omega_0$ is a paracompact connected smooth
  manifold without boundary and with trivial fundamental group, we only need
  to show that $\widetilde\Fb|_{\Omega_0}:\Omega_0\to\Omega_0$ is proper in
  order to deduce from \cite[Corollary 1]{Chi98} that
  $\widetilde\Fb|_{\Omega_0}$ is a diffeomorphism of $\Omega_0$. So let $K$ be
  a compact subset of $\Omega_0$.  As $\widetilde\Fb|_{\Omega_0}$ extends to
  the continuous map $\widetilde\Fb:X^N\to X^N$ and as $\widetilde\Fb(\partial
  X^N)\subseteq\partial X^N$, the set
  $\widetilde\Fb|_{\Omega_0}^{-1}(K)=\widetilde\Fb^{-1}(K)\subset\interior{X^N}\subseteq
  X^N$ is closed and hence compact. Therefore
  $\widetilde\Fb|_{\Omega_0}:\Omega_0\to\Omega_0$ is indeed proper.
\end{proof}

\begin{proof}[Proof of Lemma~\ref{lemma:analytic-diffeo}]
  As $\Fb$ is real analytic on a real neighbourhood of $X^N$, the real
  analyticity of $\Fb^{-1}$ on a real neighbourhood of $[-\frac12,\frac32]^N$
  follows from the real analytic inverse function theorem \cite[Theorem
  18.1]{KrPa92}. It extends to a holomorphic function on a complex
  $\delta$-neighbourhood $\Omega$ of $[-\frac12,\frac32]^N$ -- see e.g. the
  discussion of complexifications of real analytic maps
  in\cite[pp.162-163]{KrPa92}. If $\delta>0$ is sufficiently small,
  Lemma~\ref{lemma:inv-contraction} implies that $\Fb^{-1}$ is a uniform
  contraction on $\Omega$. Hence the $\delta'$ in the statement of
  the lemma can be chosen strictly smaller than $\delta$.
\end{proof}

\begin{proof}[Proof of Lemma~\ref{lemma:inv-contraction}]
  Recall that $(\Fb(\x))_i=f_{{r(\x)}}(x_i)$ where $r(\x)=G(\phi(\x))$,
  $\phi(\x)=\frac{1}{N} \sum_{i = 1}^N x_i$ and we write $f_r$ instead of
  $f_{M_r}$. Denote $g (\x) = G' (\phi (\x))$,
\begin{eqnarray*}
  \Delta_1 (\x) & \assign & \diag (f_r' (x_1), \ldots , f_r'
  (x_N))\\
  \Delta_2 (\x) & \assign & \diag ( \frac{\partial
  f_r}{\partial r} (x_1), \ldots , \frac{\partial f_r}{\partial r} (x_N))\\
  E_N & \assign & \left(\begin{array}{c}
    \frac{1}{N} \ldots  \frac{1}{N}\\
    \vdots\quad\quad \vdots\\
    \frac{1}{N} \ldots  \frac{1}{N}
  \end{array}\right)
\end{eqnarray*}
and observe that $q (x) \assign (4 - r^2) \frac{\partial f_r}{\partial r} (x)
/ f_r' (x)$ simplifies to $q(x)=1-4x^2$
so that
\begin{displaymath}
  \Delta_1(\x)^{- 1} \Delta_2(\x) = \frac{1}{4 - r^2} \diag \left( q (x_1),
   \ldots, q (x_N \right)) = : \frac{1}{4 - r^2} \Delta_3 (\x)\ .
\end{displaymath}
Then the derivative of the coupled map $\Fb(\x)=(f_{r (\x)} (x_1),
\ldots , f_{r (\x)} (x_N))$ is
\begin{displaymath}
  D\Fb(\x) = \Delta_1 (\x) + \Delta_2 (\x) E_N g (\x) = \Delta_1 (\x) \left( {\bf 1} +
   \frac{1}{4 - r^2} \Delta_3 (\x) E_N g (\x) \right)\ ,
\end{displaymath}
with $\bf{1}$ denoting the identity matrix. Letting
\begin{eqnarray*}
  \mathbf{q}=\mathbf{q}(\x) & \assign & (q (x_1), \ldots ., q (x_N))^\transpose\\
  \mathbf{e}_N & \assign & ( \frac{1}{N}, \ldots ., \frac{1}{N})^\transpose
\end{eqnarray*}
and observing that $\Delta_3 (\x) = \diag (\mathbf{q}(\x))$ so that
$\Delta_3(\x)E_N=\mathbf{q}\,\mathbf{e}_N^\transpose$, the inverse of $D\Fb
(\x)$ is
\begin{displaymath}
  D \Fb(\x)^{- 1}
  =
  \left({\bf 1} - \frac{g (\x)}{4 - r^2 + g (\x)\mathbf{e}_N^\transpose
      \mathbf{q}(\x)} \mathbf{q}(\x)\mathbf{e}_N^\transpose \right)
  \Delta_1 (\x)^{- 1}  \ .
\end{displaymath}
In order to check conditions under which $\Fb$ is uniformly expanding in all
directions, it is sufficient to find conditions under which $\|D\Fb(\x)^{-1}\|<
1$ \emph{uniformly} in $\x$. Observe first that $\| \Delta_1 (\x)^{- 1} \|
\leq (\inf f_r')^{- 1} \leq \frac{1}{2} \frac{2 + |r|}{2 - | r|} <
1$ for $|r| < \frac{2}{3}$. From now on we fix a point $\x$ and suppress it as
an argument to all functions. Then, if $\mathbf{v}$ is any vector in $\R^N$,
some scalar multiple of it can be decomposed in a unique way as $\alpha
\mathbf{v} =\mathbf{q}- \mathbf{p}$ where $\mathbf{p}$ is perpendicular to
$\mathbf{q}$. Denote $\bar{p} = \mathbf{e}_N^\transpose \mathbf{p}$, $\bar{q} =
\mathbf{e}_N^\transpose \mathbf{q}$, and observe that $\bar{q} \geq 0$ as
$\mathbf{q}$ has only nonnegative entries. We estimate the euclidian norm of \
$({\bf 1} - \frac g{4 - r^2 + g \mathbf{e}^\transpose_N \mathbf{q}} \mathbf{q}\,
\mathbf{e}^\transpose_N) (\alpha \mathbf{v})$:
\begin{equation}
  \label{eq:norm-estimate-1}
  \begin{split}
    &\left\| \left({\bf 1} - \frac{g}{4 - r^2 + g \mathbf{e}^\transpose_N
            \mathbf{q}} \mathbf{q}\, \mathbf{e}^\transpose_N \right) (\alpha
        \mathbf{v})
      \right\|^2\\
    =& \left( 1 + \frac{g \bar{p} - g \bar{q}}{4 - r^2 + g \mathbf{} \bar{q}}
    \right)^2 \| \mathbf{q} \|^2 +\| \mathbf{p} \|^2
    \\
    =& \left( 1 + \frac{ \bar{p} - \bar{q}}{\Gamma + \bar{q}} \right)^2 \|
    \mathbf{q} \|^2 +\| \mathbf{p} \|^2 \\
    = & \left( \frac{1 + \Gamma^{- 1} \bar{p}}{1 + \Gamma^{- 1} \bar{q}}
    \right)^2 \| \mathbf{q} \|^2 +\| \mathbf{p} \|^2
  \end{split}
\end{equation}
where $\Gamma \assign \frac{4 - r^2}{g}$. As $\bar{p} \leq N^{- 1 / 2} \|
\mathbf{p} \|$ and $\bar{q} \geq N^{- 1} \| \mathbf{q} \|^2$ (observe
that all entries of $\mathbf{q}$ are bounded by $1$), we can continue the
above estimate with
\begin{displaymath}
  \leq  \| \mathbf{q} \|^2 +\| \mathbf{p} \|^2 + 2\| \mathbf{p}
  \|\| \mathbf{q} \| \frac{\Gamma^{- 1} N^{- 1 / 2} \| \mathbf{q} \|}{(1 +
  \Gamma^{- 1} N^{- 1} \| \mathbf{q} \|^2)^2} +\| \mathbf{p} \|^2
  \frac{\Gamma^{- 2} N^{- 1} \| \mathbf{q} \|^2}{(1 + \Gamma^{- 1} N^{- 1}
  \| \mathbf{q} \|^2)^2 }
\end{displaymath}
To estimate this expression we abbreviate temporarily $t\assign N^{- 1 / 2} \|
\mathbf{q} \|$. Then $0 \leq t \leq 1$, and straightforward
maximisation yields:
\begin{align*}
  \frac{\Gamma^{- 1} N^{- 1 / 2} \| \mathbf{q} \|}{(1 + \Gamma^{- 1} N^{- 1}
  \| \mathbf{q} \|^2)^2} & \leq \frac{9}{16 \sqrt{3}
  \sqrt{\Gamma}}\\
  \frac{\Gamma^{- 2} N^{- 1} \| \mathbf{q} \|^2}{(1 + \Gamma^{- 1} N^{- 1}
  \| \mathbf{q} \|^2)^2 } & \leq \frac{1}{4 \Gamma}
\end{align*}
So we can continue
the above estimate by
\begin{displaymath}
  \text{(\ref{eq:norm-estimate-1})}
  \leq  (\| \mathbf{q} \|^2 +\| \mathbf{p} \|^2) \left( 1 +
  \frac{9}{16 \sqrt{3}  \sqrt{\Gamma}} + \frac{1}{4 \Gamma} \right)
\end{displaymath}
where $\| \mathbf{q} \|^2 +\| \mathbf{p} \|^2 =\| \alpha \mathbf{v} \|^2 .$
Hence
\begin{displaymath}
  \left\| \left({\bf 1} - \frac{g}{1 - r + g \mathbf{e}^\transpose_N \mathbf{q}}
  \mathbf{q} \mathbf{e}^\transpose_N \right) (\alpha \mathbf{v}) \right\|
   \leq  \left( 1 + \frac{9}{16 \sqrt{3}  \sqrt{\Gamma}} + \frac{1}{4
  \Gamma} \right)^{1 / 2}
  \|\alpha{\bf v}\|
\end{displaymath}
and therefore
\begin{displaymath}
  \|D\Fb^{- 1} (x)\|  \leq \rho \assign \left( \frac{1}{2} \cdot
  \frac{2 + |r|}{2 - |r|} \right)  \left( 1 + \frac{9}{16 \sqrt{3}
  \sqrt{\Gamma}} + \frac{1}{4 \Gamma} \right)^{1 / 2}\ .
\end{displaymath}
Observing $\Gamma = \frac{4 - r^2}{g}$ one finds numerically that the norm is
bounded by $0.99396$ uniformly for all $x$, if $-0.5\leq r \leq 0.5$ and
$0\leq g \leq25-50r$. Hence the map $\Fb$ is uniformly expanding in all
directions provided (\ref{eq:finite-ass}) holds.
\end{proof}

\section{{An iterated function system representation for smooth densities}}
\label{sec:ifs}

\subsection{An invariant class of densities}

The PFOs $P_{r}$ allow a detailed analysis since their action on certain
densities has a convenient explicit description: Consider the family
$(w_{y})_{y\in(-2,2)}$ of probability densities on $X$ given by
\[
w_{y}(x):=\frac{1-y^{2}/4}{(1-xy)^{2}}\text{, \quad}x\in X\text{.}%
\]
As pointed out in \cite{Sch81} (using different parametrisations),
Perron-Frobenius operators $P_{f_{M}}$ of fractional-linear maps $f_{M}$ act
on these densities via their \emph{duals} $f_{M^{\#}}$, where
\[
M^{\#}:=\left(
\begin{array}
[c]{cc}%
0 & 1\\
1 & 0
\end{array}
\right)  \cdot M\cdot\left(
\begin{array}
[c]{cc}%
0 & 1\\
1 & 0
\end{array}
\right)  =\left(
\begin{array}
[c]{cc}%
d & c\\
b & a
\end{array}
\right)  \text{ \quad for }M=\left(
\begin{array}
[c]{cc}%
a & b\\
c & d
\end{array}
\right)  \text{,}%
\]
in that
\begin{equation}
\label{eq:transfer-by-fract-lin}
P_{f_{M}}(1_{J}\cdot w_{y})=\left(  \int_{J}w_{y}\,d\lambda\right)  \cdot
w_{y^{\prime}}\text{ \quad with }y^{\prime}=f_{M^{\#}}(y)\text{,}%
\end{equation}
for matrices $M$ and intervals $J\subseteq X$ for which $f_{M}(J)=X$. (This
can also be verified by direct calculation).
Since $f_{{0\,1\choose1\,0}}(x)=\frac1x$, one can compute
the duals
\begin{equation}
\label{eq:sigma-tau-def}
  \begin{split}
    \sigma_{r}(y)
    &:=
    f_{M_{r}^{\#}}(y) =\frac1{f_{M_r}(\frac1 y)}
    =
    \frac{2(y+r)}{(r+1)y+r+4}\text{ \quad and}\\
    \tau_{r}(y)
    &:=
    f_{N_{r}^{\#}}(y)
    =
    \frac1{f_{N_r}(\frac1  y)}
    =
    \frac{\sigma_{r}(y)}{1-\sigma_{r}(y)} =\frac{2(y+r)}{(r-1)y-r+4}%
  \end{split}
\end{equation}
of the individual branches of $T_{r}$, then express $P_{r}\,w_{y}$ as the
convex combination
\begin{align}
P_{r}\,w_{y}  & =P_{f_{M_{r}}}(1_{[-\frac{1}{2},\alpha_{r})}\cdot
w_{y})+P_{f_{N_{r}}}(1_{(\alpha_{r},\frac{1}{2}]}\cdot w_{y})\nonumber\\
& =p_{r}(y)\cdot w_{\sigma_{r}(y)}+(1-p_{r}(y))\cdot w_{\tau_{r}(y)}%
\text{,}\label{Eq_Pr_gy_in_terms_of_sigma_tau}%
\end{align}
with weights
\begin{equation}
  \label{eq:p_r}
  p_{r}(y):=\int_{-1/2}^{\alpha_{r}}w_{y}(x)\,dx=\frac12-\frac{r+y}{4+ry}\quad
  \text{and}\quad1-p_{r}(y)=\frac12+\frac{r+y}{4+ry}
\end{equation}

It is straightforward to check that for every $r\in(-2,2)$ the
functions $\sigma_{r}$, $\tau_{r}$ are continuous and strictly increasing on
$[-2,2]$ with images $\sigma_{r}([-2,2])=[-2,2/3]$ and $\tau_{r}%
([-2,2])=[-2/3,2]$.

{}From this remark and (\ref{Eq_Pr_gy_in_terms_of_sigma_tau}) it is clear that the $P_{r}$
preserve the class of those $u\in\mathcal{D}$ which are convex combinations
$u=\int_{(-2,2)}w_{y}\,d\mu(y)=\int w_{\bullet}\,d\mu$ of the special densities
$w_{y}$ for some \emph{representing measure} $\mu$ from $\mathsf{P(-2,2)}$. We find
that $P_{r}$ acts on representing measures according to
\begin{gather}
 P_{r}\,\left(  \int_{{(-2,2)}} w_{\bullet}\,d\mu\right)
 =\int_{{(-2,2)}} w_{\bullet}\,d\left(  \mathcal{L}%
_{r}^{\ast}\mu\right)  \text{ \quad with}%
\label{Eq_Pr_acting_on_representing_measures}\\
\mathcal{L}_{r}^{\ast}\mu:=(p_{r}\cdot\mu)\circ\sigma_{r}^{-1}+((1-p_{r}%
)\cdot\mu)\circ\tau_{r}^{-1}\text{,}\nonumber
\end{gather}
where $p_{r}\cdot\mu$ denotes the measure with density $p_{r}$ w.r.t. $\mu$.

To continue, we need to collect several facts about the dual maps $\sigma_{r}$
and $\tau_{r}$.  We have
\begin{equation}
  \label{eq:sigma_prime}
\sigma_{r}^{\prime}(y)=\frac{2(4-r^{2})}{(ry+y+r+4)^{2}}\text{ \quad and
}\quad\tau_{r}^{\prime}(y)=\frac{2(4-r^{2})}{(ry-y-r+4)^{2}}\text{,}%
\end{equation}
showing that $\sigma_{r}$ and $\tau_{r}$ are strictly concave, respectively
convex, on $[-2,2]$. One next gets readily from \eqref{eq:sigma-tau-def} that
$1/\sigma_{r}-1/\tau_{r}=1$ wherever defined, and
\begin{equation*}
\sigma_{r}(y)<\tau_{r}(y)\text{ for }y\in\lbrack-2,2]\diagdown\{-r\}
\end{equation*}
while (and this observation will be crucial later on) $\sigma_r$ and $\tau_r$ have a
common zero $z_r:=-r$ and
\begin{equation}
  \label{eq:z_r}
  \sigma_r'(z_r)=\tau_r'(z_r)=\frac{2}{4-r^2},\quad\text{and}\quad
  \sigma_r(z_r+t)=\frac{2\,t}{(r+1)\,t+4-{r}^{2}} \; .
\end{equation}

\vspace{0.3cm}%

In the following, we restrict our parameter $r$ to the set $R=\left[
-\frac{4}{10},\frac{4}{10}\right]  $. Direct calculation proves that, letting
$Y:=\left[  -\frac{2}{3},\frac{2}{3}\right]  $, we have
\begin{equation}
\sigma_{r}(Y)\cup\tau_{r}(Y)\subseteq Y\text{ \quad if }r\in R\text{,
}\label{Eq_Invariance_of_Y}%
\end{equation}
so that $Y$ is an invariant set for all such $\sigma_{r}$ and $\tau_{r}$, and
that
\begin{equation}
  \label{eq:less3/4}
  \sup_{Y}\sigma_{r}^{\prime}=\sigma_{r}^{\prime}\left(  -\frac{2}{3}\right)
  \leq\frac{3}{4}\text{ \quad and \quad}\sup_{Y}\tau_{r}^{\prime}=\tau
  _{r}^{\prime}\left(  \frac{2}{3}\right)  \leq\frac{3}{4}\text{ \quad for }r\in
  R\text{,}%
\end{equation}
which provides us with a common contraction rate on $Y$ for the $\sigma_{r}$
and $\tau_{r}$ from this parameter region. All these features of $\sigma_r$
and $\tau_r$ are
illustrated by Figure~\ref{fig:sigma-tau}.
\begin{figure}
  \begin{minipage}{0.49\linewidth}
  \centering
    \includegraphics[width=0.95\linewidth,height=0.95\linewidth]{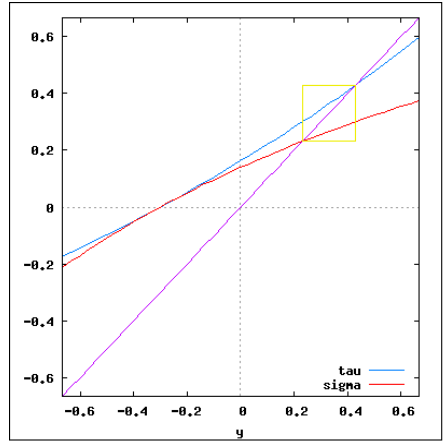}
  \end{minipage}
  \caption{The dual maps $\sigma_r$ and $\tau_r$ for $r=0.3$. The small
    invariant box has endpoints $\gamma_r$ and $\tau_r$.}
  \label{fig:sigma-tau}
\end{figure}

We denote by $\overline{w}(y):=\phi(w_{y})$
the field of the density $w_y$, and find by explicit integration that
\begin{equation}
  \label{eq:field_w_y}
  \overline{w}(y)=\left(\frac14-\frac1{y^2}\right)\log\frac{1+y/2}{1-y/2}+\frac1y
  =
  \sum_{k=0}^\infty\frac1{(2k+1)(2k+3)}\left(\frac y2\right)^{2k+1}
\end{equation}
for $y \in Y$. In particular, $\overline{w}(0)=0$ and $\overline{w}'(y) \geq \frac16 >0$,
so the field depends monotonically
on $y$. Note also that we have, for all $\mu\in\mathsf{P}(Y)$,
\begin{equation}
  \label{Eq_mean_for_convex_combi}
  \phi\left(  \int_{{Y}} w_{\bullet}\,d\mu\right)  =
  \int_{{Y}}\int_X x w_y(x)\,dx\,d\mu(y)
  =
  \int_{{Y}} \overline{w}\,d\mu .
\end{equation}

We focus on densities $u$ with representing measure $\mu$ supported on $Y$,
i.e. on the class $\mathcal{D}^{\prime}:=\{u=\int_{Y}w_{\bullet}\,d\mu$ : $\mu
\in\mathsf{P}(Y)\}$. Writing
\begin{equation}
  r_{\mu}:=
  G\left(\int_{Y}\overline w\,d\mu\right)=G(\phi(u)) \in R,
\end{equation}
and recalling (\ref{Eq_Pr_acting_on_representing_measures}),
we find that our nonlinear operator $\widetilde{P}$ acts on the representing
measures from $\mathsf{P}(Y)$ via
\begin{gather}
\widetilde{P}\,\left(  \int_{{Y}} w_{\bullet}\,d\mu\right)
=\int_{{Y}} w_{\bullet}\,d\left(
\widetilde{\mathcal{L}}^{\ast}\mu\right)  \text{ \quad with }%
\label{Eq_P_tilda_acting_on_representing_measures}\\
\widetilde{\mathcal{L}}^{\ast}\mu:=\mathcal{L}_{r_{\mu}}^{\ast}\,\mu
=(p_{r_{\mu}}\cdot\mu)\circ\sigma_{r_{\mu}}^{-1}+((1-p_{r_{\mu}})\cdot
\mu)\circ\tau_{r_{\mu}}^{-1}\text{.}\nonumber
\end{gather}
As \textrm{supp}$(\widetilde{\mathcal{L}}^{\ast}\mu)$, the support of
$\widetilde{\mathcal{L}}^{\ast}\mu$, is contained in $\sigma_{r}%
(\mathrm{supp}(\mu))\cup\tau_{r}(\mathrm{supp}(\mu))$, it is immediate from
(\ref{Eq_Invariance_of_Y}) that
\[
\widetilde{P}\mathcal{D}^{\prime}\subseteq\mathcal{D}^{\prime}\text{.}%
\]

{}For $r\in R=[-\frac4{10},\frac4{10}]$ we find that $\sigma_{r}$ and $\tau_{r}$
each have a unique stable fixed point in $Y$, given by
\[
\sigma_{r}(\gamma_{r})=\gamma_{r}:=\frac{r}{r+1}\text{ \quad and \quad}%
\tau_{r}(\delta_{r})=\delta_{r}:=\frac{-r}{r-1}\text{,}%
\]
respectively. Note that the interval $Y_{r}:=[\gamma_{r},\delta_{r}]$ of width
$2r^{2}/(1-r^{2})$ between these stable fixed points is invariant under both
$\sigma_{r}$ and $\tau_{r}$, see the small boxed region in Figure~\ref{fig:sigma-tau}. Furthermore, each of $\gamma_{r}$ and $\delta
_{r}$ is mapped to $r$ under the branch not fixing it, i.e.
\[
\sigma_{r}(\delta_{r})=\tau_{r}(\gamma_{r})=r\text{,}%
\]
meaning that, restricted to $Y_{r}$, $\sigma_{r}$ and $\tau_{r}$ are the
inverse branches of some 2-to-1 piecewise fractional linear map $S_{r}%
:Y_{r}\rightarrow Y_{r}$.

{The explicit $T_{r}$-invariant densities $u_r$ from (\ref{eq:inv-dens-2})
can be represented as $u_{r}=\int_Y w_{\bullet}\,d\mu_{r}$
with $\mu_{r}\in\mathsf{P}(Y_{r})\subseteq\mathsf{P}(Y)$ given by}
\begin{equation}
  \label{eq:mu-r-density}
  {
  \frac{d\mu_{r}}{d\lambda}(y) =
  \left(\log\frac{r^{2}-4}{9r^{2}-4}\right)^{-1}
  \frac{1_{Y_r}(y)}{1-y^2/4}.
  }
\end{equation}

{Our goal in this section is to study the asymptotic
behaviour of $\widetilde{P}$ on $\mathcal{D}'$, using its representation
by means of the IFS $\tilde\Lc^*$. We will prove}

\begin{prop} [ {\textbf{Long-term behaviour of }$\widetilde{P}$\textbf{\ on
  }$\mathcal{D}'$} ]
  \label{prop:summary}
  Take any $u \in \mathcal{D}'$, $u=\int_Y w_y\,d\mu(y)$ for some $\mu\in\mathsf{P}(Y)$.
  The following is an
  exhaustive list of possibilities for the asymptotic behaviour of
  the sequence $(\tilde\Lc^{*n}\mu)_{n\geq0}$:
  \begin{enumerate}
  \item $\tilde\Lc^{*n}\mu\succ\delta_0$ for some $n\geq0$. Then
    $(\tilde\Lc^{*n}\mu)_{n\geq0}$ converges to $\mu_{r_*}$ and hence $\tilde
    P^n u$ converges to $u_{r_*}$ in $L_{1}({X},\lambda)$.
  \item\label{item:summary-2}
    The interval $\conv(\supp(\tilde\Lc^{*n}\mu))$ contains $0$ for all
    $n\geq0$. Then $(\tilde\Lc^{*n}\mu)_{n\geq0}$ converges to $\delta_0$ and
    hence $\widetilde{P}^n u$ converges to $u_0$ in $L_{1}({X},\lambda)$. In this case also
    the length of $\conv(\supp(\tilde\Lc^{*n}\mu))$ tends to $0$
  \item $\tilde\Lc^{*n}\mu\prec\delta_0$ for some $n\geq0$. Then
    $(\tilde\Lc^{*n}\mu)_n$ converges to $\mu_{-r_*}$ and hence $\tilde
    P^n u$ converges to $u_{-r_*}$ in $L_{1}({X},\lambda)$.
  \end{enumerate}
  {In the stable regime},
  only scenario (2) is possible, so that we always have convergence
  {of $(\tilde\Lc^{*n}\mu)_{n\geq0}$} to $\delta_0$.
\end{prop}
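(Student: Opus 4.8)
The plan is to exploit the monotonicity and contraction structure of the IFS $\tilde\Lc^*$ on $\mathsf{P}(Y)$, organised around a stochastic (or convex-hull) partial order $\succ$ with $\delta_0$ as a distinguished reference point. First I would make the order structure explicit: say $\mu\succ\nu$ if the distribution functions are ordered, and note that $\mu\succ\delta_0$ exactly means $\supp(\mu)\subseteq(0,2]$, while $\conv(\supp\mu)\ni 0$ is the incomparable case. The three scenarios of the proposition are then literally the three positions $\mu$ can occupy relative to $\delta_0$ at some time $n$, except that one must first check these positions are \emph{stable under $\tilde\Lc^*$} in the appropriate direction: once $\supp(\tilde\Lc^{*n}\mu)\subseteq(0,2]$ it stays there (so scenario (1) persists), once it is below it stays below (scenario (3)), and if none of these ever happens we are in scenario (2). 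This dichotomy/trichotomy is exhaustive by construction, so the real content is proving the three convergence claims.

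For scenario (1): once $\supp(\tilde\Lc^{*n}\mu)\subseteq(0,2]$, the relevant dynamics is driven by $r_\mu=G(\phi(u))>0$, and I would show the support is eventually trapped in $Y_{r_*}=[\gamma_{r_*},\delta_{r_*}]$ and that $\tilde\Lc^*$ acts there as a genuine contraction: using the common contraction rate $\sup_Y\sigma_r',\sup_Y\tau_r'\le\frac34$ from \eqref{eq:less3/4} together with the fact that $\gamma_{r_*},\delta_{r_*}$ are the stable fixed points of $\sigma_{r_*},\tau_{r_*}$ and $H(r_*)=r_*$, one gets that $\conv(\supp(\tilde\Lc^{*n}\mu))$ shrinks and its location converges to $r_*$; then the measures converge weakly to the unique invariant measure on $Y_{r_*}$, which is $\mu_{r_*}$ by \eqref{eq:mu-r-density}. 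Because $u\mapsto\int_Y w_\bullet\,d\mu$ is continuous from weak convergence on $\mathsf{P}(Y)$ into $L_1(X,\lambda)$ (the $w_y$ depend continuously on $y\in Y$ in $L_1$, uniformly), weak convergence $\tilde\Lc^{*n}\mu\to\mu_{r_*}$ upgrades to $\tilde P^n u\to u_{r_*}$ in $L_1$. Scenario (3) is identical via the symmetry \eqref{Eq_Symmetry}.

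For scenario (2): here $\conv(\supp(\tilde\Lc^{*n}\mu))=:[a_n,b_n]$ always contains $0$. The key observation is that $\sigma_r$ and $\tau_r$ share the common zero $z_r=-r$ with $\sigma_r'(z_r)=\tau_r'(z_r)=\tfrac{2}{4-r^2}\le\tfrac12$ on $R$ (see \eqref{eq:z_r}), and that $r_\mu$ and the interval $[a_n,b_n]$ both bracket $0$; I would show $[a_{n+1},b_{n+1}]\subseteq\sigma_{r_n}([a_n,b_n])\cup\tau_{r_n}([a_n,b_n])$ and, using concavity/convexity of $\sigma_r,\tau_r$ plus the position of the common zero between the stable fixed points, that the length $b_n-a_n$ decreases by a uniform factor bounded below $1$ as long as it stays bounded. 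This forces $b_n-a_n\to 0$, hence $a_n,b_n\to 0$ (both sandwiching $0$), hence $\tilde\Lc^{*n}\mu\to\delta_0$ weakly and $\tilde P^n u\to u_0=1_X$ in $L_1$. The main obstacle, and the step I would spend the most care on, is precisely this uniform contraction of $\conv(\supp)$ in scenario (2): the factor $g=G'(\phi)$ varies with the state, $r_\mu$ need not equal the "centre" of the support, and one must rule out the pathology where the support slowly creeps without its length going to zero — this is where the specific geometry of $\sigma_r,\tau_r$ (common zero, derivatives $\le\frac12$ there, the nesting \eqref{Eq_Invariance_of_Y}) has to be used quantitatively rather than softly. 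Finally, the last sentence (stable regime $\Rightarrow$ only (2)): since $H'(0)\le 1$ and $H$ is S-shaped, $0$ is the unique fixed point and $H(r)$ lies strictly between $0$ and $r$ for $r\ne0$; an argument as above then shows $\conv(\supp)$ can never be pushed entirely to one side of $0$, so scenarios (1) and (3) are vacuous and convergence to $\delta_0$ is universal.
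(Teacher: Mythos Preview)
Your trichotomy and the scenario (2) argument are in the right spirit and close to the paper's treatment. The substantial divergence is in scenarios (1) and (3), and there is a real gap there.

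For scenario (1) you propose to trap the support in $Y_{r_*}$ and then argue that $\tilde\Lc^*$ is a genuine contraction on $\mathsf{P}(Y_{r_*})$. This is problematic on two counts. First, $Y_{r_*}$ is invariant for $\sigma_{r_*},\tau_{r_*}$, but the actual iteration uses $\sigma_{r_{\mu_n}},\tau_{r_{\mu_n}}$ with $r_{\mu_n}\neq r_*$; to get the support into $Y_{r_*}$ you would already need $r_{\mu_n}\to r_*$, which is what you are trying to prove. Second, the branch contraction rate $\tfrac34$ controls each $\Lc_r^*$, but $\tilde\Lc^*$ is \emph{nonlinear} through the feedback $\mu\mapsto r_\mu$, and that feedback can in principle offset the $\tfrac34$ gain; the paper notes explicitly (in the remark following the continuity lemma) that establishing contraction of $\tilde\Lc^*$ near $\mu_{r_*}$ requires additional numerical bounds and further restrictions on $G$, and deliberately avoids this route. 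Also, ``$\conv(\supp)$ shrinks and its location converges to $r_*$'' cannot be right as stated: $\conv(\supp(\mu_{r_*}))=Y_{r_*}$ has positive length, so the support does not shrink to a point.

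The paper's argument for (1) instead leans entirely on the \emph{order} you set up but then abandoned: it sandwiches $\mu\succ\delta_0$ between two Dirac masses, $\delta_y\preceq\tilde\Lc^*\mu\preceq\delta_{2/3}$ for some small $y>0$, and observes that $(\tilde\Lc^{*n}\delta_{2/3})$ is $\preceq$-decreasing while, for $y$ small enough that $\sigma_{G(\overline w(y))}(y)>y$, the sequence $(\tilde\Lc^{*n}\delta_y)$ is $\preceq$-increasing. Monotone bounded sequences in $(\mathsf{P}(Y),\preceq)$ converge weakly to fixed points of $\tilde\Lc^*$, and the only fixed point $\succeq\delta_y\succ\delta_0$ is $\mu_{r_*}$; the sandwich then forces $\tilde\Lc^{*n}\mu\to\mu_{r_*}$. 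No contraction estimate for $\tilde\Lc^*$ is needed.

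Your stable-regime paragraph contains an error: the claim that ``$\conv(\supp)$ can never be pushed entirely to one side of $0$'' is false. If $\mu=\delta_{1/2}$ then $r_\mu>0$, so $z_{r_\mu}<0$ and both $\sigma_{r_\mu},\tau_{r_\mu}$ map $(0,\tfrac23]$ into itself; the support stays strictly positive forever, so the hypothesis of (1) holds. The reason the stable regime still yields convergence to $\delta_0$ is not that (1) and (3) are vacuous, but that the paper sets $r_*:=0$ there, so $\mu_{\pm r_*}=\delta_0$ and all three scenarios have the same conclusion.
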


{Our arguments will rely on continuity and
monotonicity properties of the IFS, that we detail below, before proving
Proposition \ref{prop:summary} in Section \ref{se:propro}.}

\subsection{The IFS: continuity}

{\emph{Convergence in} $\mathsf{P}(Y)$, $\lim \mu_n = \mu$, will always mean weak convergence
of measures, $\int_Y \rho\;d\mu_n \to \int_Y \rho\;d\mu$ for bounded
continuous $\rho:Y\to \R$. Since $Y$ is a
bounded interval, this is equivalent to convergence in the}
\emph{Wasserstein-metric} $\d$ on $\mathsf{P}(Y)$.
If $F_\mu$ and $F_\nu$ are the distribution functions of $\mu$ and
$\nu$, then
\begin{equation}
  \label{eq:Wasserstein}
  \d(\mu,\nu):=\int_{-\infty}^\infty|F_\mu(x)-F_\nu(x)|\,dx .
\end{equation}
The Kantorovich-Rubinstein theorem (e.g. \cite[Ch.11]{Dudley02}) provides an additional
characterisation:
\begin{equation}
\label{eq:Kanto-Rubi}
  \d(\mu,\nu)=\sup_{\psi:\,\mathrm{Lip}_{Y}[
    \psi]\leq1}\int_{Y}\psi\,d(\mu-\nu)
\end{equation}
for any $\mu,\nu\in\mathsf{P}(Y)$.
Here, $\mathrm{Lip}_{Y}[\psi]  :=\sup_{y,y^{\prime}\in
Y;y\neq y^{\prime}}\left\vert \psi(y)-\psi(y^{\prime})\right\vert /\left\vert
y-y^{\prime}\right\vert $ for any $\psi:Y\rightarrow\mathbb{R}$ (and
analogously for functions on other domains).
{We now see that} there is a constant $K>0$
{(the common Lipschitz bound for the functions
$y \mapsto w_y(x)$ on $Y$, where $x\in X$)}
such that
\begin{align}
  \left\|\int_{{Y}} w_{\bullet}\,d\mu-\int_{{Y}} w_{\bullet}\,d\nu
  \right\|_{L_{1}({X},\lambda)}
  &\leq
  K\cdot\d(\mu,\nu).\label{eq:norm_W_estimate}
\end{align}
This means that, for densities from $\mathcal{D}'$, convergence of the
representing measures implies $L_1$-convergence of the densities.

We will also use the following estimate.
\begin{lemma}[{\textbf{Continuity of $ {(r,\mu) \mapsto} \Lc_r^*\mu$}}]
  \label{lemma:stability}
  There are constants $\kappa_1,\kappa_2>0$ such that
  \begin{displaymath}
    \d(\Lc_r^*\mu,\Lc_s^*\nu)\leq \kappa_1\d(\mu,\nu)+\kappa_2|r-s|
  \end{displaymath}
  for all $\mu,\nu\in\mathsf{P}(Y)$ and all $r,s\in R$.
\end{lemma}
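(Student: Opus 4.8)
**Proof proposal for Lemma (Continuity of $(r,\mu)\mapsto\mathcal{L}_r^*\mu$).**

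The plan is to estimate $\d(\mathcal{L}_r^*\mu,\mathcal{L}_s^*\nu)$ by inserting the intermediate measure $\mathcal{L}_r^*\nu$ and applying the triangle inequality: $\d(\mathcal{L}_r^*\mu,\mathcal{L}_s^*\nu)\leq\d(\mathcal{L}_r^*\mu,\mathcal{L}_r^*\nu)+\d(\mathcal{L}_r^*\nu,\mathcal{L}_s^*\nu)$. The first term measures the effect of changing the representing measure with the parameter fixed, and should be controlled by $\kappa_1\d(\mu,\nu)$; the second measures the effect of changing the parameter with the measure fixed, and should be controlled by $\kappa_2|r-s|$. For both I will use the Kantorovich--Rubinstein dual formulation \eqref{eq:Kanto-Rubi}, testing against $1$-Lipschitz functions $\psi:Y\to\R$ and pulling $\psi$ back through the branch maps.

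For the first term, write $\int_Y\psi\,d(\mathcal{L}_r^*\mu-\mathcal{L}_r^*\nu)=\int_Y (p_r\cdot(\psi\circ\sigma_r)+(1-p_r)\cdot(\psi\circ\tau_r))\,d(\mu-\nu)$. The integrand is a function on $Y$, and I need its Lipschitz constant. Since $\sigma_r,\tau_r$ are contractions on $Y$ with $\sup_Y\sigma_r',\sup_Y\tau_r'\leq\tfrac34$ by \eqref{eq:less3/4}, the compositions $\psi\circ\sigma_r$ and $\psi\circ\tau_r$ are $\tfrac34$-Lipschitz and uniformly bounded (by $\max_Y|\psi|$, which we may take bounded after subtracting a constant, using that only differences against $\mu-\nu$ matter). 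The weight $p_r$ is smooth in $y$ on $Y$ with a derivative bounded uniformly in $r\in R$ (it is a fractional-linear function of $y$ with denominator bounded away from zero on the compact set $R\times Y$), so the product is Lipschitz with a constant $\kappa_1$ independent of $r$. This gives $\d(\mathcal{L}_r^*\mu,\mathcal{L}_r^*\nu)\leq\kappa_1\d(\mu,\nu)$. (In fact one expects $\kappa_1<1$ once the Lipschitz bound on $p_r$ is folded in with the $\tfrac34$ contraction, but any finite $\kappa_1$ suffices for the statement.)

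For the second term, $\int_Y\psi\,d(\mathcal{L}_r^*\nu-\mathcal{L}_s^*\nu)=\int_Y\bigl[(p_r\,\psi\circ\sigma_r+(1-p_r)\,\psi\circ\tau_r)-(p_s\,\psi\circ\sigma_s+(1-p_s)\,\psi\circ\tau_s)\bigr]\,d\nu$, and I bound the bracket pointwise by $\kappa_2|r-s|$. This uses: $|\psi\circ\sigma_r-\psi\circ\sigma_s|\leq\mathrm{Lip}[\psi]\cdot\|\sigma_r-\sigma_s\|_\infty\leq\sup_{r,y}|\partial_r\sigma_r(y)|\cdot|r-s|$, and similarly for $\tau$; and $|p_r-p_s|\leq\sup_{r,y}|\partial_r p_r(y)|\cdot|r-s|$. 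All these partial derivatives are continuous functions on the compact set $R\times Y$ (from the explicit formulae \eqref{eq:sigma-tau-def}, \eqref{eq:sigma_prime}, \eqref{eq:p_r}, with denominators bounded away from zero), hence uniformly bounded, so collecting terms yields the bracket $\leq\kappa_2|r-s|$ with $\kappa_2$ independent of $\nu$ and of $\psi$. Taking the supremum over $1$-Lipschitz $\psi$ and adding the two bounds completes the proof.

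The only mild obstacle is bookkeeping: one must confirm that all the denominators appearing in $\sigma_r,\tau_r,p_r$ and their $r$-derivatives stay bounded away from $0$ for $(r,y)\in R\times Y=[-\tfrac{4}{10},\tfrac{4}{10}]\times[-\tfrac23,\tfrac23]$, so that the various sup-norms are genuinely finite; this is a routine check on explicit fractional-linear expressions, exactly the kind of estimate already carried out in \eqref{Eq_Invariance_of_Y}--\eqref{eq:less3/4}. No serious difficulty is expected.
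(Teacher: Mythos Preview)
Your proposal is correct and follows essentially the same approach as the paper: test against $1$-Lipschitz $\psi$ via Kantorovich--Rubinstein, pull back through the IFS to obtain the function $p_r\,(\psi\circ\sigma_r)+(1-p_r)\,(\psi\circ\tau_r)$, and bound its Lipschitz dependence on $y$ and on $r$ separately using the explicit fractional-linear formulae on the compact set $R\times Y$. The only cosmetic differences are that the paper writes the two contributions in a single inequality rather than via an explicit triangle-inequality split through $\mathcal{L}_r^*\nu$, and that it passes to $\mathcal{C}^1$ test functions (then approximates) to differentiate the pulled-back integrand directly; your normalisation $\psi(y_0)=0$ to control $\|\psi\|_\infty$ serves the same purpose.
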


\begin{proof}
  {}For Lipschitz functions $\psi$ on $Y$ we have
  \begin{gather}
    \label{eq:Lipschitz-1}
    \int_{Y}\psi\,d\left(\Lc_r^*\mu-\Lc_s^*\nu\right)
    \leq
    \mathrm{Lip}_{Y}\left[
      (\psi\circ\sigma_{r})\,p_{r}+(\psi\circ\tau_{r}
      )(1-p_{r})\right]  \cdot\d(\mu,\nu)\nonumber\\
    +\sup_{y\in Y}\,\mathrm{Lip}_{R}\left[  \psi(\sigma_{.}(y))\,p_{.}%
      (y)+\psi(\tau_{.}(y))(1-p_{.}(y))\right]  \cdot|r-s|
  \end{gather}
  Suppose that $\psi$ is $\mathcal{C}^{1}$, with $\left\vert \psi^{\prime
    }\right\vert \leq1$. Then the first Lipschitz constant is bounded by
  \begin{displaymath}
    \kappa_1:=\sup_{r \in R} {\left[
    \left\|(\tau_{r}-\sigma_{r})\cdot p_{r}^{\prime}\right\|_\infty
    +\left\|\sigma_{r}^{\prime}\cdot p_{r}+\tau_{r}^{\prime}\cdot
    (1-p_{r})\right\|_\infty
    \right]} < \infty,
\end{displaymath}
and the second one by
\begin{displaymath}
    \kappa_2:=\sup_{r \in R}   {\left[
    \left\| \frac{\partial\sigma_{r}}{\partial r}\cdot p_{r}\right\|_\infty
    +\left\| \frac{\partial\tau_{r}}{\partial r}\cdot(1-p_{r})\right\|_\infty
    +\left\| (\tau_{r}-\sigma_{r})\cdot\frac{\partial p_{r}}{\partial r}\right\|_\infty
    \right]} < \infty,
\end{displaymath}
and the lemma follows from the Kantorovich-Rubinstein theorem
\eqref{eq:Kanto-Rubi},
{since these $\mathcal{C}^1$ functions $\psi$
uniformly approximate the Lipschitz functions appearing there}.
\end{proof}

\begin{coro}
\label{co:contIFS}
{
The operators $\Lc^*_r$, $r\in R$, and $\tilde\Lc^*$ are uniformly}
Lipschitz-continuous on $\mathsf{P}(Y)$ for the Wasserstein metric.
\end{coro}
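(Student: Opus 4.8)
The plan is to read everything off Lemma~\ref{lemma:stability}. For the family $(\Lc_r^*)_{r\in R}$ this is immediate: taking $s=r$ in that lemma gives $\d(\Lc_r^*\mu,\Lc_r^*\nu)\leq\kappa_1\,\d(\mu,\nu)$ for all $\mu,\nu\in\mathsf{P}(Y)$, with $\kappa_1$ not depending on $r$, so all the $\Lc_r^*$ are Lipschitz with the common constant $\kappa_1$.

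For $\tilde\Lc^*$ I would use that $\tilde\Lc^*\mu=\Lc_{r_\mu}^*\mu$ with $r_\mu=G(\int_Y\overline w\,d\mu)\in R$ and apply Lemma~\ref{lemma:stability} with $(r,s)=(r_\mu,r_\nu)$:
\[
\d(\tilde\Lc^*\mu,\tilde\Lc^*\nu)\leq\kappa_1\,\d(\mu,\nu)+\kappa_2\,|r_\mu-r_\nu|.
\]
So it remains to check that $\mu\mapsto r_\mu$ is Lipschitz on $(\mathsf{P}(Y),\d)$. Here $r_\mu-r_\nu=G(\int_Y\overline w\,d\mu)-G(\int_Y\overline w\,d\nu)$; since $G$ is real-analytic (a fortiori $\mathcal{C}^1$) on the compact interval $X$, $\sup_X|G'|<\infty$ (in fact it equals the coupling strength $B$, because $G$ is S-shaped), so $|r_\mu-r_\nu|\leq(\sup_X|G'|)\cdot\bigl|\int_Y\overline w\,d(\mu-\nu)\bigr|$. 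By the power series in \eqref{eq:field_w_y}, $\overline w$ is $\mathcal{C}^1$ on $Y$ with $\overline w'\geq\tfrac16$ and $\overline w'$ bounded above, hence $\mathrm{Lip}_Y[\overline w]<\infty$; the Kantorovich--Rubinstein formula \eqref{eq:Kanto-Rubi} then gives $\bigl|\int_Y\overline w\,d(\mu-\nu)\bigr|\leq\mathrm{Lip}_Y[\overline w]\,\d(\mu,\nu)$. Combining these bounds yields $|r_\mu-r_\nu|\leq(\sup_X|G'|)\,\mathrm{Lip}_Y[\overline w]\,\d(\mu,\nu)$ and hence $\d(\tilde\Lc^*\mu,\tilde\Lc^*\nu)\leq\bigl(\kappa_1+\kappa_2(\sup_X|G'|)\,\mathrm{Lip}_Y[\overline w]\bigr)\,\d(\mu,\nu)$, which is the assertion.

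Since the whole argument is just a concatenation of Lipschitz estimates, I do not expect a genuine obstacle; the only thing needing (routine) verification is that all the constants appearing — $\kappa_1$ and $\kappa_2$ from Lemma~\ref{lemma:stability}, $\sup_X|G'|$, and $\mathrm{Lip}_Y[\overline w]$ — are finite, which holds because $\sigma_r,\tau_r,p_r$ and their $r$-derivatives, as well as $\overline w$ and $G$, are smooth on the compact sets $R$ and $Y$ (resp.\ $X$), and $\overline w'$ stays bounded away from $0$ on $Y$.
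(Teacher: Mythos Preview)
Your proof is correct and follows essentially the same route as the paper's: both derive the Lipschitz continuity of $\Lc_r^*$ directly from Lemma~\ref{lemma:stability} with $s=r$, and both handle $\tilde\Lc^*$ by bounding $|r_\mu-r_\nu|\leq\mathrm{Lip}(G)\,\mathrm{Lip}_Y[\overline w]\,\d(\mu,\nu)$ and plugging into Lemma~\ref{lemma:stability}. The only difference is cosmetic: you spell out the Kantorovich--Rubinstein step and the finiteness of the constants, whereas the paper states the key inequality in one line. (Your closing remark that $\overline w'$ is bounded away from $0$ is not actually needed here---only the upper bound matters for the Lipschitz constant.)
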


\begin{proof}
The Lipschitz-continuity of $\Lc^*_r$ is immediate from Lemma~\ref{lemma:stability}.
{}For $\tilde\Lc^*$, we recall from \eqref{eq:r_mu} and \eqref{Eq_P_tilda_acting_on_representing_measures}
that $\tilde\Lc^*\mu=\Lc^*_{r_\mu}\mu$ with $r_{\mu}=G(\int_{Y}\overline w\,d\mu)$ so that
$$|r_\mu-r_\nu|\leq \mathrm{Lip}(G)\; \mathrm{Lip}(\overline w)\; \d(\mu,\nu),$$
which allows to conclude with Lemma~\ref{lemma:stability}.
\end{proof}

\begin{remark}
  Rigorous numerical bounds give $\kappa_1\leq0.5761$ and
  $\kappa_2\leq0.5334$.
  These estimates can be used to show that not only the individual $\Lc^*_r$
  are uniformly contracting on $\mathsf{P}(Y)$, but also
  (under suitable restrictions on the function $G$)  $\tilde\Lc^*$ is a
  uniform contraction on $\mathsf{P}(Y^*)$ where $Y^*$ is a suitable
  neighbourhood of the support of $\mu_{r_*}$, and $\mu_{r_*}$ is the
  representing measure of $u_{r_*}$ with $r_*$ the unique positive fixed
  point of the equation $r=G(\phi(u_r))$. Our treatment of $\tilde\Lc^*$,
  however, does not rely on these estimates, because it is based on
  monotonicity properties explained below.
\end{remark}

\subsection{The IFS: monotonicity}
On the space $\mathsf{P}(Y)$ of {probability measures $\mu,\nu$ representing
densities from $\mathcal{D}',$} we introduce
an order {relation} by {defining}
\begin{equation}
  \label{eq:order}
  \mu\preceq\nu\quad:\Leftrightarrow\quad\forall y\in Y:\,
  \mu(y,\infty)\leq\nu(y,\infty)
\end{equation}
The symbols $\prec$, $\succeq$ and $\succ$ designate the usual variants of
$\preceq$.  We collect a few elementary facts on this order relation:
\begin{equation}
  \label{eq:mono-fact-1}
  \eqbox{t}{$\mu\preceq\nu$ if and only if
    $\textstyle{\int_Y} u\,d\mu\leq\int_Y u\,d\nu$ for
    each bounded and non-decreasing $u:Y\to\R$.}
\end{equation}
In particular, if $\mu\preceq\nu$, then
    ${\textstyle{\int_Y}} \overline{w}\,d\mu \leq
    {\textstyle{\int_Y}} \overline{w}\,d\nu$, and hence $r_{\mu} \leq r_{\nu}$ as well.
\begin{equation}
  \label{eq:mono-fact-2}
  \eqbox{t}{If $\mu\preceq\nu$ and if $\rho_1,\rho_2:Y\to Y$ are non-decreasing
    and such that
    $\rho_1(y)\leq\rho_2(y)$ for all $y$,
    then $\mu\circ\rho_1^{-1}\preceq\nu\circ\rho_2^{-1}$.}
\end{equation}
\begin{equation}
  \label{eq:mono-fact-2a}
  \eqbox{t}{If $\mu\preceq\nu$ and if
  ${\textstyle{\int_Y}} u\,d\mu
    = {\textstyle{\int_Y}} u\,d\nu$ for some
    strictly increasing\\ $u:Y\to\R$, then $\mu=\nu$.}
\end{equation}
\begin{equation}
  \label{eq:mono-fact-3}
  \eqbox{t}{Let $z\in Y$. Then $\delta_z\preceq\mu$ if and only if
    $\supp(\mu)\subseteq[z,\infty)$.}
\end{equation}
{We also observe that the representing measures $\mu_r$ of the
$T_r$-invariant densities $u_r$ form a linearly ordered subset of $\mathsf{P}(Y)$.
Routine calculations based on (\ref{eq:mu-r-density}) show that}
\begin{equation}
  \label{eq:mono-mu-r-family}
  {
  \mu_r \prec \mu_s \quad \text{if}\ r<s.
  }
\end{equation}

{Our analysis of the asymptotic behaviour of the IFS will crucially
depend on the fact that the operators $\Lc^*_r $ and $\tilde\Lc^* $ respect this order
relation on $\mathsf{P}(Y)$, as made precise in the next lemma:}

\begin{lemma}[{\textbf{Monotonicity of $(r,\mu) \mapsto \Lc_r^*\mu$}}]
  \label{lemma:pre-monotonicity}
  Let $\mu,\nu\in\mathsf{P}(Y)$ {and $r,s \in R$}.
  \begin{enumerate}[a)]
  \item\label{item:pre-monotonicity-a} If $\mu\prec\nu$,
    then $\Lc^*_r\mu\prec\Lc_r^*\nu$.
  \item\label{item:pre-monotonicity-b} If $r<s$, then
    $\Lc_r^*\mu\prec\Lc_s^*\mu$.
  \item\label{item:pre-monotonicity-c} If
    $\mu\prec\nu$, then
    $\tilde\Lc^*\mu\prec\tilde\Lc^*\nu$.
  \end{enumerate}
\end{lemma}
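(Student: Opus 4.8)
\emph{Proof plan.} The plan is to reduce all three statements to one elementary monotonicity property of the ``one-step average'' built into the IFS. For a bounded non-decreasing $u:Y\to\R$ and a parameter $r\in R$, I would set
\[
  \Psi_r^u(y):=u(\sigma_r(y))\,p_r(y)+u(\tau_r(y))\,(1-p_r(y)),\qquad y\in Y,
\]
so that, directly from the definition \eqref{Eq_Pr_acting_on_representing_measures} of $\Lc_r^*$ and a change of variables, $\int_Y u\,d(\Lc_r^*\mu)=\int_Y\Psi_r^u\,d\mu$ for every $\mu\in\mathsf{P}(Y)$. By the order characterisation \eqref{eq:mono-fact-1}, part a) then follows once $\Psi_r^u$ is shown to be non-decreasing in $y$ for every bounded non-decreasing $u$ (then $\mu\preceq\nu$ gives $\int\Psi_r^u\,d\mu\leq\int\Psi_r^u\,d\nu$ for all such $u$, i.e.\ $\Lc_r^*\mu\preceq\Lc_r^*\nu$); part b) follows once $r\mapsto\Psi_r^u(y)$ is shown to be non-decreasing for every fixed $y\in Y$ (then $r\leq s$ gives $\Psi_r^u\leq\Psi_s^u$ pointwise, hence $\Lc_r^*\mu\preceq\Lc_s^*\mu$ after integration against $\mu$); and part c) drops out of the chain $\tilde\Lc^*\mu=\Lc_{r_\mu}^*\mu\prec\Lc_{r_\mu}^*\nu\preceq\Lc_{r_\nu}^*\nu=\tilde\Lc^*\nu$, where the first relation is part a) applied to $\mu\prec\nu$, the second is part b) applied with $r_\mu\leq r_\nu$ (valid since $\mu\preceq\nu$ forces $\int_Y\overline{w}\,d\mu\leq\int_Y\overline{w}\,d\nu$ by \eqref{eq:mono-fact-1} and $G$ is increasing), and a $\prec$-step followed by a $\preceq$-step is again $\prec$.

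The heart of the matter is thus the joint monotonicity of $\Psi_r^u$ in $y$ and $r$, which I would establish from signs immediate from the explicit formulas: by \eqref{eq:sigma_prime} the maps $\sigma_r,\tau_r$ are strictly increasing on $Y$ with $\sigma_r\leq\tau_r$ there; differentiating \eqref{eq:sigma-tau-def} in the parameter gives $\frac{\partial\sigma_r}{\partial r}(y)=\frac{2(4-y^2)}{((r+1)y+r+4)^2}>0$ and $\frac{\partial\tau_r}{\partial r}(y)=\frac{2(4-y^2)}{((r-1)y-r+4)^2}>0$ on $Y$, so $\sigma_r(y),\tau_r(y)$ also increase with $r$; and from \eqref{eq:p_r}, $p_r(y)\in(0,1)$ on $Y$ with $\frac{\partial p_r}{\partial y}(y)=-\frac{4-r^2}{(4+ry)^2}<0$ and $\frac{\partial p_r}{\partial r}(y)=-\frac{4-y^2}{(4+ry)^2}<0$, so $p_r(y)$ decreases in both variables. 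The key trick is then the algebraic splitting: for reals $a_1\leq a_2$, $b_1\leq b_2$, $a_1\leq b_1$ and $p_1\geq p_2$ in $[0,1]$,
\[
  \bigl(p_2a_2+(1-p_2)b_2\bigr)-\bigl(p_1a_1+(1-p_1)b_1\bigr)
   =p_2(a_2-a_1)+(1-p_2)(b_2-b_1)+(p_1-p_2)(b_1-a_1)\geq0,
\]
a sum of three manifestly non-negative terms. Feeding in $(a_i,b_i,p_i)=\bigl(u(\sigma_r(y_i)),u(\tau_r(y_i)),p_r(y_i)\bigr)$ for $y_1<y_2$ yields $\Psi_r^u(y_1)\leq\Psi_r^u(y_2)$, and feeding in $(a_i,b_i,p_i)=\bigl(u(\sigma_{r_i}(y)),u(\tau_{r_i}(y)),p_{r_i}(y)\bigr)$ for $r_1<r_2$ yields $\Psi_r^u(y)\leq\Psi_s^u(y)$; the four sign hypotheses are in each case exactly the monotonicity facts just listed. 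This already gives the $\preceq$-versions of a), b), c).

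To upgrade to $\prec$ under the strict hypotheses $\mu\prec\nu$ resp.\ $r<s$, I would test against the strictly increasing field function $\overline{w}$ of \eqref{eq:field_w_y}: because $p_r(y)>0$ strictly on $Y$ and $\sigma_r,\tau_r$ are strictly increasing in $y$ and in $r$, the first summand $p_2(a_2-a_1)$ in the identity is strictly positive when $u=\overline{w}$ and $y_1<y_2$ (resp.\ $r<s$), so $\Psi_r^{\overline{w}}$ is strictly increasing in $y$ and $r\mapsto\Psi_r^{\overline{w}}(y)$ is strictly increasing. For a), $\mu\prec\nu$ together with \eqref{eq:mono-fact-2a} then forces $\int_Y\overline{w}\,d(\Lc_r^*\mu)=\int_Y\Psi_r^{\overline{w}}\,d\mu<\int_Y\Psi_r^{\overline{w}}\,d\nu=\int_Y\overline{w}\,d(\Lc_r^*\nu)$, so $\Lc_r^*\mu\neq\Lc_r^*\nu$ and hence $\Lc_r^*\mu\prec\Lc_r^*\nu$; for b), the pointwise strict inequality $\Psi_r^{\overline{w}}<\Psi_s^{\overline{w}}$ on the compact interval $Y$ integrates to $\int_Y\overline{w}\,d(\Lc_r^*\mu)<\int_Y\overline{w}\,d(\Lc_s^*\mu)$, whence $\Lc_r^*\mu\prec\Lc_s^*\mu$; and c) follows from the chain in the first paragraph, since a $\prec$-step composed with a $\preceq$-step is again $\prec$ (equality of the outer measures would force the middle one to equal both, contradicting the strict first step). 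I do not expect a real obstacle here: the one point that needs a moment's thought is spotting the three-term splitting of the difference of convex combinations, which handles the $y$-direction and the $r$-direction by the same bookkeeping; the remaining work is just reading signs off the fractional-linear formulas already derived.
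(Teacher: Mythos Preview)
Your proof is correct and follows essentially the same route as the paper: both reduce everything to the monotonicity (in $y$ and in $r$) of the integrand $\Psi_r^u(y)=u(\sigma_r(y))p_r(y)+u(\tau_r(y))(1-p_r(y))$, obtain strictness by testing against a strictly increasing function, and derive c) from the chain $\Lc_{r_\mu}^*\mu\prec\Lc_{r_\mu}^*\nu\preceq\Lc_{r_\nu}^*\nu$. Your three-term identity for the difference of two convex combinations is exactly the bookkeeping the paper carries out line by line in its display \eqref{eq:monotonicity-estimate}, just packaged more compactly.
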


\begin{proof}
  Let $u:Y\to\R$ be bounded and non-decreasing and {recall} that
\begin{gather}
    \label{eq:explicit-1}
      \int_{{Y}} u\,d(\Lc_r^*\mu) =
      \int_{{Y}} {[} u\circ\sigma_r\cdot p_r+ u\circ\tau_r\cdot(1-p_r){]}\,d\mu.
\end{gather}

  \ref{item:pre-monotonicity-a}) Let $\mu\preceq\nu$.
  {In view of (\ref{eq:mono-fact-1}) we can prove $\Lc^*\mu\preceq\Lc^*\nu$ by showing
  that the integrand on the right-hand side of} \eqref{eq:explicit-1} is non-decreasing.
  {}For this we use the facts that
  $\sigma_r$ and $\tau_r$ are strictly increasing {with $\sigma_r < \tau_r$},
  and that $p_r$ is non-increasing, since
   $p_r'(y)=-\frac{4-r^2}{(4+ry)^2}<0$. One gets then, for $x {<} y$,
  \begin{equation}
    \label{eq:monotonicity-estimate}
    \begin{split}
      &u(\sigma_rx)p_r(x)+u(\tau_rx)(1-p_r(x))\\
      =&
      u(\sigma_rx)p_r(y)+u(\sigma_rx)\underbrace{(p_r(x)-p_r(y))}_{{>}0}
      +u(\tau_rx)(1-p_r(x))\\
      \leq&
      u(\sigma_rx)p_r(y)+u(\tau_rx)(p_r(x)-p_r(y))+u(\tau_rx)(1-p_r(x))\\
      =&
      u(\sigma_rx)p_r(y)+u(\tau_rx)(1-p_r(y))\\
      =&
      u(\sigma_ry)p_r(y)+u(\tau_ry)(1-p_r(y))\\
      &- \underbrace{\left[ \left( u(\sigma_ry)-u(\sigma_rx) \right) p_r(y)
                     + \left( u(\tau_ry)-u(\tau_rx) \right) (1-p_r(y)) \right].}_{\geq 0}
    \end{split}
  \end{equation}
  Hence $\mu\preceq\nu$ implies
  $\Lc^*_{{r}}\mu\preceq\Lc^*_{{r}}\nu$.
  {Now, if $u$ is strictly increasing, then
  (\ref{eq:monotonicity-estimate}) always is a strict inequality, i.e. the
  integrand on the right-hand side of \eqref{eq:explicit-1} is strictly increasing.
  Therefore, $\mu\prec\nu$ implies $\Lc^*_r\mu\prec\Lc^*_r\nu$ by (\ref{eq:mono-fact-2a}).} \\

  \ref{item:pre-monotonicity-b}) We must show that \eqref{eq:explicit-1} is
  non-decreasing as a function of $r$. To this end note first that
  \begin{gather}
    \frac{\partial p_r(y)}{\partial r}=\frac{y^2-4}{(ry+4)^2}<0\ ,\\
    \frac{\partial\sigma_r(y)}{\partial r}=\frac{8-2y^2}{(ry+y+r+4)^2}>0\ ,
    \label{eq:dp_dr}\\
    \frac{\partial\tau_r(y)}{\partial r}=\frac{8-2y^2}{(ry-y-r+4)^2}>0\ .
  \end{gather}
  Hence, if $r< s$, then
  \begin{displaymath}
    \begin{split}
      &u(\sigma_rx)p_r(x)+u(\tau_rx)(1-p_r(x))\\
      =&
      u(\sigma_rx)p_s(x)+u(\sigma_rx)\underbrace{(p_r(x)-p_s(x))}_{>0}
      +u(\tau_rx)(1-p_r(x))\\
      \leq&
      u(\sigma_sx)p_s(x)+u(\tau_sx)(p_r(x)-p_s(x))+u(\tau_sx)(1-p_r(x))\\
      =&
      u(\sigma_sx)p_s(x)+u(\tau_sx)(1-p_s(x))\ ,
    \end{split}
  \end{displaymath}
  and for strictly {increasing} $u$ we have indeed a strict inequality.\\

  \ref{item:pre-monotonicity-c}) This follows from
  \ref{item:pre-monotonicity-a}) and \ref{item:pre-monotonicity-b}):
  \begin{displaymath}
    \tilde\Lc^*\mu=\Lc_{r_\mu}^*\mu\prec\Lc_{r_\mu}^*\nu
    {\,\preceq\,} \Lc^*_{r_\nu}\nu
    =\tilde\Lc^*\nu
  \end{displaymath}
  as $r_\mu=G(\int_{{Y}}\overline{w}\,d\mu) {\,\leq\,}
  G(\int_{{Y}}\overline{w}\,d\nu)=r_\nu$ by
  \eqref{eq:mono-fact-1}.
\end{proof}

In \S\ref{subsec:stablemfd} below, we will also make use of a more precise
quantitative version of statement a).
It is natural to state and prove it at this point.

\begin{lemma}[{\textbf{Quantifying the growth of $\mu \mapsto \Lc_r^*\mu$}}]
 \label{lemma:gap-estimate}
 Suppose that $\alpha,\beta>0$ are such
 that $u'\geq\alpha$ and $\tau_r',\sigma_r'\geq\beta$.
 Then, for $\mu\preceq\nu$,
 \begin{equation}
   \label{eq:gap-estimate}
   \int_{{Y}} u\,d(\Lc_r^*\nu)-
   \int_{{Y}} u\,d(\Lc_r^*\mu)
   \geq
   \alpha\,\beta\,\left(
   \int_{{Y}}\id\,d\nu-\int_{{Y}}\id\,d\mu\right)\,.
 \end{equation}
\end{lemma}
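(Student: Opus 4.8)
\medskip

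The plan is to deduce the estimate \eqref{eq:gap-estimate} from the order characterisation \eqref{eq:mono-fact-1}, applied to one well-chosen test function. Recall from \eqref{eq:explicit-1} that for a bounded non-decreasing $u$ one has $\int_Y u\,d(\Lc_r^*\mu)=\int_Y\Psi_r\,d\mu$, where $\Psi_r(y):=u(\sigma_r(y))\,p_r(y)+u(\tau_r(y))\,(1-p_r(y))$, and likewise with $\nu$ in place of $\mu$. Consequently the asserted inequality is equivalent to $\int_Y(\Psi_r-\alpha\beta\,\id)\,d\nu\geq\int_Y(\Psi_r-\alpha\beta\,\id)\,d\mu$. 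Since $\Psi_r$ is bounded on the bounded interval $Y$ (as $u$ is bounded and $0\leq p_r\leq1$), so is $\Psi_r-\alpha\beta\,\id$; hence, granted $\mu\preceq\nu$, this will follow at once from \eqref{eq:mono-fact-1} as soon as we show that $\Psi_r-\alpha\beta\,\id$ is non-decreasing on $Y$, i.e. that $\Psi_r'(y)\geq\alpha\beta$ for (a.e.) $y\in Y$.

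So the whole content of the lemma is this one derivative bound. Differentiating, \[\Psi_r'(y)=u'(\sigma_r(y))\,\sigma_r'(y)\,p_r(y)+u'(\tau_r(y))\,\tau_r'(y)\,(1-p_r(y))+p_r'(y)\,\bigl(u(\sigma_r(y))-u(\tau_r(y))\bigr).\] I would handle the three summands separately. The first two are nonnegative, and by the hypotheses $u'\geq\alpha$, $\sigma_r'\geq\beta$, $\tau_r'\geq\beta$ together with $p_r,1-p_r\in[0,1]$ (these being the weights in the convex combination \eqref{Eq_Pr_gy_in_terms_of_sigma_tau}) their sum is at least $\alpha\beta\,p_r(y)+\alpha\beta\,(1-p_r(y))=\alpha\beta$. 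The third summand is $\geq0$ as well: $p_r'<0$ by \eqref{eq:p_r}, while $\sigma_r(y)\leq\tau_r(y)$ on $Y$, so $u(\sigma_r(y))-u(\tau_r(y))\leq0$ by monotonicity of $u$, and the product of two nonpositive factors is nonnegative. Adding up, $\Psi_r'(y)\geq\alpha\beta$, which is what was needed; this merely refines the sign bookkeeping already carried out in the proof of part a) of Lemma~\ref{lemma:pre-monotonicity}.

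I do not expect a genuine obstacle here; the only point requiring a little care is the regularity of $u$. If $u$ is merely Lipschitz (the natural assumption in the Wasserstein framework used elsewhere), then $\Psi_r$ is again Lipschitz, hence absolutely continuous, the product-rule computation is valid for a.e. $y$, and ``$\Psi_r-\alpha\beta\,\id$ non-decreasing'' still follows from the a.e. lower bound on its derivative; alternatively one proves the claim first for $\mathcal{C}^1$ functions $u$ and passes to the limit via uniform approximation. Everything else is the routine computation displayed above.
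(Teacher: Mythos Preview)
Your proof is correct and follows essentially the same route as the paper: both show that the function $\Psi_r-\alpha\beta\,\id$ (the paper calls it $v$) is non-decreasing on $Y$ and then invoke \eqref{eq:mono-fact-1}. The only cosmetic difference is that you verify monotonicity by differentiating $\Psi_r$, whereas the paper works with finite differences, plugging the bound $u(\sigma_r y)-u(\sigma_r x)\geq\alpha\beta(y-x)$ (and its $\tau_r$-analogue) into the chain \eqref{eq:monotonicity-estimate} already established for Lemma~\ref{lemma:pre-monotonicity}\ref{item:pre-monotonicity-a}); this sidesteps your regularity caveat about Lipschitz $u$ but is otherwise the same computation.
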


\begin{proof}
  Observing that
  $u(\sigma_ry)-u(\sigma_rx)$ and $u(\tau_ry)-u(\tau_rx)$ are
  $\geq \alpha \beta (y-x)$, we find for the last expression in
  \eqref{eq:monotonicity-estimate} that
  $$ \left[ \left( u(\sigma_ry)-u(\sigma_rx) \right) p_r(y)
    + \left( u(\tau_ry)-u(\tau_rx) \right) (1-p_r(y)) \right] \geq \alpha
  \beta y - \alpha \beta x.$$ This turns \eqref{eq:monotonicity-estimate} into
  a chain of inequalities which shows that the function given by $v(x):=
  u(\sigma_rx)p_r(x)+u(\tau_rx)(1-p_r(x)) -\alpha \beta x$ is
  non-decreasing. Hence, by \eqref{eq:mono-fact-1}, $\mu\preceq\nu$ entails
  $\textstyle{\int_Y} v\,d\mu\leq\int_Y v\,d\nu$, which is
  \eqref{eq:gap-estimate}.
\end{proof}

\subsection{{Dynamics of the IFS and the
asymptotics of $\widetilde{P}$ on $\mathcal{D}'$}}
\label{se:propro}

{We are now going to clarify the asymptotic
behaviour of $\tilde\Lc^*$ on $\mathsf{P}(Y)$. In view of
(\ref{eq:norm_W_estimate}), this also determines the asymptotics of
$\widetilde{P}$ on $\mathcal{D}'$}{, and hence proves
Proposition \ref{prop:summary}.}

{Our argument depends on monotonicity properties
which we can exploit since the topology of weak convergence on
$\mathsf{P}(Y)$, conveniently given by the Wasserstein metric, is
consistent with the order relation introduced above. Indeed, one
easily checks:}
\begin{equation}
  \label{eq:mono-cty-fact-1}
  {
  \eqbox{t}{If $(\nu_n)$ and $(\bar{\nu}_n)$ are weakly convergent sequences in
  $\mathsf{P}(Y)$ with $\nu_n \preceq \bar{\nu}_n$ for all $n$, then
  $\lim \nu_n \preceq \lim \bar{\nu}_n$.}
  }
\end{equation}

Recall from \S ~\ref{subsec:infinite-sys} that, {in the bistable regime},
$r_*$ is the unique positive fixed point of the equation $r=G(\phi(u_r))$.
For convenience, we now let
$$r_*:=0 \quad \text{in the stable regime}.$$
Then, in either case, $u_r$ with representing measure $\mu_r$ is
fixed by $\widetilde{P}$ iff $r\in\{0,\pm r_*\}$. By (\ref{eq:mono-mu-r-family})
we have $\mu_{-r_*} \preceq \mu_{0}=\delta_0 \preceq \mu_{r_*}$ with strict
inequalities {in the bistable regime}.

\begin{lemma}[{\textbf{Convergence by monotonicity}}]
  \label{lemma:monotonicity}
  If $\mu\preceq\tilde\Lc^*\mu$, then
  $\mu\preceq\tilde\Lc^*\mu\preceq\tilde\Lc^{*2}\mu\preceq\dots$, and the
  sequence $(\tilde\Lc^{*n}\mu)_{n\geq0}$ converges weakly to a measure
  $\mu_{r}\succeq\mu$ with $r\in\{0,\pm r_*\}$. The same holds for $\succeq$
  instead of $\preceq$.
\end{lemma}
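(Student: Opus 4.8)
The plan is to exploit the monotonicity of $\tilde\Lc^*$ established in Lemma~\ref{lemma:pre-monotonicity}\ref{item:pre-monotonicity-c}, together with the compactness of $\mathsf{P}(Y)$ and the continuity of $\tilde\Lc^*$ from Corollary~\ref{co:contIFS}. Assume $\mu\preceq\tilde\Lc^*\mu$. First I would show by induction that the orbit is monotone increasing: applying $\tilde\Lc^*$ to $\tilde\Lc^{*(n-1)}\mu\preceq\tilde\Lc^{*n}\mu$ and using part~c) of Lemma~\ref{lemma:pre-monotonicity} (in its non-strict form, which follows from the strict version together with the trivial case of equality) gives $\tilde\Lc^{*n}\mu\preceq\tilde\Lc^{*(n+1)}\mu$. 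So $(\tilde\Lc^{*n}\mu)_{n\geq0}$ is a $\preceq$-increasing sequence in $\mathsf{P}(Y)$.

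Next, $\mathsf{P}(Y)$ is weakly compact (as $Y$ is a compact interval), so some subsequence $\tilde\Lc^{*n_k}\mu$ converges weakly to a limit $\nu\in\mathsf{P}(Y)$. I claim the full sequence converges to $\nu$. Indeed, fix $m$; for $n_k\geq m$ we have $\tilde\Lc^{*m}\mu\preceq\tilde\Lc^{*n_k}\mu$, so by \eqref{eq:mono-cty-fact-1} (order is preserved under weak limits) $\tilde\Lc^{*m}\mu\preceq\nu$; conversely, for any $n$ pick $n_k\geq n$ and argue similarly, or more directly: since the sequence of distribution functions $F_{\tilde\Lc^{*n}\mu}$ is pointwise monotone decreasing in $n$ (by the definition \eqref{eq:order} of $\preceq$), it converges pointwise, hence the sequence $(\tilde\Lc^{*n}\mu)$ converges weakly to a single limit $\nu$, which agrees with the subsequential limit. (Here one uses that the limit of distribution functions of probability measures on the bounded set $Y$ is again the distribution function of a probability measure, with no escape of mass.) By construction $\nu\succeq\tilde\Lc^{*n}\mu\succeq\mu$ for all $n$.

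It remains to identify $\nu$ as one of $\mu_0,\mu_{\pm r_*}$. By Corollary~\ref{co:contIFS}, $\tilde\Lc^*$ is continuous on $\mathsf{P}(Y)$ for the Wasserstein metric, which metrises weak convergence; hence $\tilde\Lc^*\nu=\tilde\Lc^*(\lim_n\tilde\Lc^{*n}\mu)=\lim_n\tilde\Lc^{*(n+1)}\mu=\nu$, so $\nu$ is a fixed point of $\tilde\Lc^*$. A fixed point of $\tilde\Lc^*$ corresponds to a $\widetilde P$-fixed density in $\mathcal{D}'$, which by \eqref{eq:r-star} (combined with $u_r\neq u_{r'}$ for $r\neq r'$ and ergodicity of each $T_r$, exactly as in the discussion preceding Theorem~\ref{theo:infinite-system}) must be one of $u_0,u_{\pm r_*}$; equivalently $\nu\in\{\mu_0,\mu_{r_*},\mu_{-r_*}\}$, i.e. $\nu=\mu_r$ with $r\in\{0,\pm r_*\}$, and $\mu_r\succeq\mu$. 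The symmetric statement (with $\succeq$ replacing $\preceq$, giving a decreasing orbit converging down to some $\mu_r\preceq\mu$) follows by the identical argument with the order reversed, or directly from the symmetry \eqref{Eq_Symmetry} of the system.

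The main obstacle I anticipate is the last step of pinning down that the fixed points of $\tilde\Lc^*$ on all of $\mathsf{P}(Y)$ are \emph{only} the $\mu_r$, $r\in\{0,\pm r_*\}$: one must be sure that a $\tilde\Lc^*$-fixed measure $\nu$ genuinely yields a $\widetilde P$-fixed density (this is immediate from \eqref{Eq_P_tilda_acting_on_representing_measures} and \eqref{eq:norm_W_estimate}), and then invoke the already-noted fact that the $\widetilde P$-fixed densities in $\mathcal D$ are exactly $u_0$ and $u_{\pm r_*}$. No new work is needed there, so the argument is essentially a soft compactness-plus-monotonicity argument; the only mild care required is the no-mass-escape point in the convergence step, which is automatic because everything lives on the compact interval $Y$.
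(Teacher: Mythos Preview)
Your proposal is correct and follows essentially the same route as the paper: monotonicity of the orbit via Lemma~\ref{lemma:pre-monotonicity}\ref{item:pre-monotonicity-c}), uniqueness of the weak limit via the order relation (the paper cites \eqref{eq:mono-cty-fact-1}, you additionally spell out the pointwise monotonicity of distribution functions), and identification of the limit as a fixed point of $\tilde\Lc^*$ via compactness of $\mathsf{P}(Y)$ and continuity of $\tilde\Lc^*$. Your extra care about why the only $\tilde\Lc^*$-fixed measures are the $\mu_r$, $r\in\{0,\pm r_*\}$, is justified; the paper takes this for granted, and your reduction to the classification \eqref{eq:r-star} of $\widetilde P$-fixed densities (together with injectivity of $\mu\mapsto\int_Y w_\bullet\,d\mu$, which follows from a moment argument since the $w_y$ are analytic in $x$) fills that in cleanly.
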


\begin{proof}
  The monotonicity of the sequence $(\tilde\Lc^{*n}\mu)_{n\geq0}$ follows immediately
  from Lemma~\ref{lemma:pre-monotonicity}\ref{item:pre-monotonicity-c}).
  {Because of (\ref{eq:mono-cty-fact-1}),
  it implies that the sequence can have at most one weak accumulation point.
  Compactness of $\mathsf{P}(Y)$ and continuity of $\tilde\Lc^*$ therefore ensure
  that $(\tilde\Lc^{*n}\mu)_{n\geq0}$ converges to a fixed point
  of $\tilde\Lc^*$, i.e. to one of the measures $\mu_r$ with $r\in\{0,\pm
  r_*\}$, and (\ref{eq:mono-cty-fact-1}) entails $\mu_r\succeq\mu$.}
  The proof for decreasing sequences is the same.
\end{proof}

The following lemma strengthens the previous one considerably.
{It provides uniform control, in terms of the
Wasserstein distance \eqref{eq:Wasserstein}, on the asymptotics
of large families of representing measures.}

\begin{lemma}[{\textbf{Convergence by comparison}}]
  \label{lemma:controlled-convergence}
  We have the following:
  \begin{enumerate}[a)]
  \item\label{item:controlled-convergence-a}
    {In the stable regime}, there exists a sequence $(\varepsilon_n)_{n\geq0}$ of
    positive real numbers converging to zero such that
    $$ \d(\tilde\Lc^{*n}\mu,\delta_0)\leq\varepsilon_n \quad\ \text{for}\
       \mu\in\mathsf{P}(Y)\ \text{and}\ n\in\N.$$
  \item\label{item:controlled-convergence-b} {In the bistable regime, for every
    ${y}>0$ there exists a sequence
    $(\varepsilon_n)_{n\geq0}$ of positive real numbers converging to zero such that}
    $$ \d(\tilde\Lc^{*n}\mu,\mu_{r_*})\leq\varepsilon_n \quad\ \text{for}\
       \mu\in\mathsf{P}(Y)\ \text{with}\  \mu\succeq\delta_{{y}}\
       \text{and}\ n\in\N.$$
    An analogous assertion holds for measures $\mu\preceq\delta_{{-y}}$.
\end{enumerate}
\end{lemma}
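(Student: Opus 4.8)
The plan is to trap the orbit of an arbitrary $\mu$ between the orbits of two carefully chosen Dirac masses, whose orbits are monotone — hence convergent by Lemma~\ref{lemma:monotonicity} — and then to identify the two limits from the order structure of the fixed point set $\{\mu_{-r_*},\mu_0,\mu_{r_*}\}$ of $\tilde\Lc^*$ on $\mathsf{P}(Y)$. Throughout I use the following elementary consequence of \eqref{eq:Wasserstein}: if $\mu_1\preceq\nu\preceq\mu_2$ and $\mu_1\preceq\bar\nu\preceq\mu_2$ in $\mathsf{P}(Y)$, then $\d(\nu,\bar\nu)\leq\d(\mu_1,\mu_2)$, because the distribution functions of $\nu$ and $\bar\nu$ both lie pointwise between $F_{\mu_2}$ and $F_{\mu_1}$. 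I also use that, by \eqref{eq:mono-fact-3}, $\delta_{-2/3}$ and $\delta_{2/3}$ are the $\preceq$-smallest and $\preceq$-largest elements of $\mathsf{P}(Y)$, and that $\tilde\Lc^*$ maps $\mathsf{P}(Y)$ into itself.

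For part a), in the stable regime $\mu_0=\delta_0$ is the only fixed point of $\tilde\Lc^*$. Extremality of $\delta_{\pm2/3}$ gives $\delta_{-2/3}\preceq\tilde\Lc^*\delta_{-2/3}$ and $\delta_{2/3}\succeq\tilde\Lc^*\delta_{2/3}$, so by Lemma~\ref{lemma:monotonicity} the orbit $(\tilde\Lc^{*n}\delta_{-2/3})_n$ increases weakly to $\delta_0$ and $(\tilde\Lc^{*n}\delta_{2/3})_n$ decreases weakly to $\delta_0$; weak convergence on the bounded interval $Y$ is convergence in $\d$. For any $\mu\in\mathsf{P}(Y)$ we have $\delta_{-2/3}\preceq\mu\preceq\delta_{2/3}$, hence by Lemma~\ref{lemma:pre-monotonicity} $\tilde\Lc^{*n}\delta_{-2/3}\preceq\tilde\Lc^{*n}\mu\preceq\tilde\Lc^{*n}\delta_{2/3}$, and by \eqref{eq:mono-cty-fact-1} also $\tilde\Lc^{*n}\delta_{-2/3}\preceq\delta_0\preceq\tilde\Lc^{*n}\delta_{2/3}$. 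The squeezing remark then yields $\d(\tilde\Lc^{*n}\mu,\delta_0)\leq\d(\tilde\Lc^{*n}\delta_{-2/3},\tilde\Lc^{*n}\delta_{2/3})$, and the right-hand side, enlarged by $2^{-n}$ to make it strictly positive, is the desired null sequence $\varepsilon_n$.

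For part b), fix $y>0$ and consider $\mu\succeq\delta_y$ (the case $\mu\preceq\delta_{-y}$ follows from the symmetry \eqref{Eq_Symmetry}, which conjugates $\tilde\Lc^*$ to itself, reverses $\preceq$, and swaps $\mu_{r_*}\leftrightarrow\mu_{-r_*}$, $\delta_y\leftrightarrow\delta_{-y}$). The \emph{upper} orbit is as before: $(\tilde\Lc^{*n}\delta_{2/3})_n$ decreases weakly to a fixed point, which by Lemma~\ref{lemma:pre-monotonicity} dominates $\tilde\Lc^{*n}\mu_{r_*}=\mu_{r_*}$ for every $n$, hence is $\succeq\mu_{r_*}$ and therefore equals $\mu_{r_*}$ (the largest fixed point, since $\mu_{-r_*}\prec\mu_0\prec\mu_{r_*}$). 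For the \emph{lower} orbit I need a small $y''\in(0,y]$ with $\delta_{y''}\preceq\tilde\Lc^*\delta_{y''}$. Writing $r(t):=G(\overline w(t))$, one has $\tilde\Lc^*\delta_t=p_{r(t)}(t)\,\delta_{\sigma_{r(t)}(t)}+(1-p_{r(t)}(t))\,\delta_{\tau_{r(t)}(t)}$, and for $t>0$ also $r(t)>0$ and $\sigma_{r(t)}(t)<\tau_{r(t)}(t)$, so by \eqref{eq:mono-fact-3} the condition $\delta_t\preceq\tilde\Lc^*\delta_t$ is equivalent to $\sigma_{r(t)}(t)\geq t$, i.e.\ (as $\sigma_r$ is increasing and concave with stable fixed point $\gamma_r=\tfrac{r}{r+1}$) to $t\leq\gamma_{r(t)}$. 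Since $\gamma_{r(t)}/t=\frac{\gamma_{r(t)}}{r(t)}\cdot\frac{r(t)}{t}\to 1\cdot G'(0)\,\overline w'(0)=G'(0)/6$ as $t\downarrow0$ (using $\overline w'(0)=\tfrac16$ from \eqref{eq:field_w_y}), and $G'(0)/6>1$ in the bistable regime, we get $t\leq\gamma_{r(t)}$ for all sufficiently small $t>0$; this furnishes $y''$. By Lemma~\ref{lemma:monotonicity}, $(\tilde\Lc^{*n}\delta_{y''})_n$ then increases weakly to a fixed point $\succeq\delta_{y''}\succ\delta_0=\mu_0$, hence to $\mu_{r_*}$. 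Finally, for $\mu\succeq\delta_y\succeq\delta_{y''}$ we have $\tilde\Lc^{*n}\delta_{y''}\preceq\tilde\Lc^{*n}\mu\preceq\tilde\Lc^{*n}\delta_{2/3}$ and $\tilde\Lc^{*n}\delta_{y''}\preceq\mu_{r_*}\preceq\tilde\Lc^{*n}\delta_{2/3}$, so the squeezing remark gives $\d(\tilde\Lc^{*n}\mu,\mu_{r_*})\leq\d(\tilde\Lc^{*n}\delta_{y''},\tilde\Lc^{*n}\delta_{2/3})$, a null sequence depending only on $y$ (through $y''$); enlarging it by $2^{-n}$ produces the required $\varepsilon_n$.

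The only genuinely new ingredient, and the main obstacle, is the construction of the starting measure $\delta_{y''}$ in the bistable regime — i.e.\ the fact that $\tilde\Lc^*$ pushes sufficiently small positive Dirac masses strictly upward in the $\preceq$-order. This is exactly where bistability ($G'(0)>6$, equivalently $H'(0)>1$) enters; the remainder is bookkeeping with the order relation, together with the monotonicity, continuity and compactness already established.
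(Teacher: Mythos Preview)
Your proof is correct and follows essentially the same approach as the paper: trap $\tilde\Lc^{*n}\mu$ between the monotone orbits of two Dirac masses (the extremal $\delta_{\pm2/3}$, and in part~b) a small $\delta_{y''}$), identify their common limit via Lemma~\ref{lemma:monotonicity} and the known fixed-point set, and use the order-squeezing estimate for $\d$. The only minor variations are that you take $\varepsilon_n=\d(\tilde\Lc^{*n}\delta_{-},\tilde\Lc^{*n}\delta_{+})+2^{-n}$ instead of the paper's sum of distances to the limit, and that you verify $\sigma_{r(t)}(t)\geq t$ for small $t>0$ via the fixed-point inequality $t\leq\gamma_{r(t)}$ rather than via the paper's explicit first-order expansion \eqref{eq:jump-from-y}; both arguments yield the same threshold $G'(0)/6>1$.
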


\begin{proof}
  {
  As $Y=[-\frac23,\frac23]$, we trivially have $\delta_{-2/3} \preceq \mu \preceq \delta_{2/3}$
  for all $\mu \in \mathsf{P}(Y)$.
  In particular, $\tilde\Lc^*\delta_{2/3}\preceq\delta_{2/3}$, and Lemma~\ref{lemma:monotonicity}
  ensures that $(\tilde\Lc^{*n}\delta_{2/3})_{n\geq0}$ converges.
  Due to Lemma~\ref{lemma:pre-monotonicity}\ref{item:pre-monotonicity-c}), we have
  $\delta_0 \preceq \mu_{r_*} = \tilde\Lc^{*n} \mu_{r_*} \preceq \tilde\Lc^{*n} \delta_{2/3}$
  for all $n\geq0$, showing, via (\ref{eq:mono-cty-fact-1}), that
  $\lim \tilde\Lc^{*n}\delta_{2/3} = \mu_{r_*}$.
  }
  In the same way one proves that $(\tilde\Lc^{*n}\delta_{-2/3})_{n\geq0}$
  converges to $\mu_{-r_*}$.
  {For the stable regime this means}
  that both sequences converge to $\delta_0=\mu_0$.\\

  \ref{item:controlled-convergence-a})
  Assume we are {in the stable regime}. By the above discussion,
  \begin{displaymath}
    \varepsilon_n:=\d(\tilde\Lc^{*n}\delta_{-2/3},\delta_0)+
    \d(\tilde\Lc^{*n}\delta_{2/3},\delta_0)
  \end{displaymath}
  tends to zero. For any $\mu$, (\ref{eq:mono-cty-fact-1}) guarantees
  $\tilde\Lc^{*n}\delta_{-2/3}\preceq\tilde\Lc^{*n}\mu\preceq\tilde\Lc^{*n}\delta_{2/3}$
  for all $n\geq0$. Hence $F_{\tilde\Lc^{*n}\delta_{-2/3}}(y)\geq
  F_{\tilde\Lc^{*n}\mu}(y)\geq F_{\tilde\Lc^{*n}\delta_{2/3}}(y)$ for all
  $y$, proving $\d(\tilde\Lc^{*n}\mu,\delta_0)\leq\varepsilon_n$.\\

  \ref{item:controlled-convergence-b}) Now consider {the bistable regime}.
  {Note first that if there is a suitable sequence $(\varepsilon_n)_{n\geq0}$
    for some ${y}>0$, then it also works for all ${y}'>{y}$. Therefore, there
    is no loss of generality if we assume that ${y}>0$ is so small that }
  \begin{equation}
    \label{eq:jump-from-y}
    \sigma_{G(\overline{w}({{y}}))}({y})
    = \left(\frac12+\frac{G'(0)}{12}\right){y}+\Oc({y}^2)
    > {y}\
  \end{equation}
  {(use \eqref{eq:field_w_y}, \eqref{eq:dp_dr} and \eqref{eq:sigma-tau-def} to see
  that this can be achieved.) }
  {Since $\Lc^*_r \delta_{{y}} = p_r({{y}})
  \delta_{\sigma_r({y})} +
  (1-p_r({{y}})) \delta_{\tau_r({{y}})}$ and
  $\sigma_r({{y}}) < \tau_r({{y}})$, we then have}
  $\delta_0\prec\delta_{{y}}\prec
  \Lc^*_{G(\overline{w}({{y}}))}\delta_{{y}}
  =\tilde\Lc^*\delta_{{y}}$,
  recall Lemma \ref{lemma:pre-monotonicity}. Lemma~\ref{lemma:monotonicity} then implies that
  $(\tilde\Lc^{*n}\delta_{{y}})_{n\geq0}$ converges to
  $\mu_{r_*}$. In view of the
  initial discussion, $(\tilde\Lc^{*n}\delta_{2/3})_{n\geq0}$ converges
  to $\mu_{r_*}$ as well, so that
  \begin{displaymath}
    \varepsilon_n:=\d(\tilde\Lc^{*n}\delta_{{y}},\mu_{r_*})+
    \d(\tilde\Lc^{*n}\delta_{2/3},\mu_{r_*})
  \end{displaymath}
  defines a sequence of reals converging to zero.
  {Now take any $\mu\in\mathsf{P}(Y)$ with $\mu\succeq\delta_{{y}}$,
  then $\tilde\Lc^{*n}\delta_{2/3}\succeq\tilde\Lc^{*n}\mu\succeq\tilde\Lc^{*n}\delta_{{y}}$
  for all $n\geq0$, and $\d(\tilde\Lc^{*n}\mu,\mu_{r_*})\leq\varepsilon_n$ follows
  as in the proof of \ref{item:controlled-convergence-a}) above.}
\end{proof}

{This observation enables us to determine the asymptotics
of $\tilde\Lc^{*n}\mu$ for any $\mu \in \mathsf{P}(Y)$ which is completely supported
on the positive half $(0,2/3]$ of $Y$ (meaning that $\mu\succ\delta_0$, cf.
(\ref{eq:mono-fact-3})), or on its negative half $[-2/3,0)$.}

\begin{coro}
  \label{coro:monotone-convergence}
  Let $\mu\in\mathsf{P}(Y)$.
  \begin{enumerate}[a)]
  \item\label{item:monotone-convergence-a}
    {In the stable regime}, the sequence $(\tilde\Lc^{*n}\mu)_{n\geq0}$ converges
    to $\delta_0$.
  \item\label{item:monotone-convergence-b} {In the bistable regime}, if
    $\mu\succ\delta_0$, then the sequence $(\tilde\Lc^{*n}\mu)_{n\geq0}$ converges
    to $\mu_{r_*}$. If $\mu\prec\delta_0$, it converges to
    $\mu_{-r_*}$.
  \end{enumerate}
\end{coro}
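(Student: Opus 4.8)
The plan is to read this off Lemma~\ref{lemma:controlled-convergence}, with one small preparatory step needed in the bistable case. Part~\ref{item:monotone-convergence-a}) requires nothing new: in the stable regime Lemma~\ref{lemma:controlled-convergence}\ref{item:controlled-convergence-a}) already furnishes a single null sequence $(\varepsilon_n)_{n\geq0}$ with $\d(\tilde\Lc^{*n}\mu,\delta_0)\leq\varepsilon_n$ valid for \emph{every} $\mu\in\mathsf{P}(Y)$, so $\tilde\Lc^{*n}\mu\to\delta_0$ at once.

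For part~\ref{item:monotone-convergence-b}) I would treat $\mu\succ\delta_0$ in detail; the case $\mu\prec\delta_0$ then follows by the same argument applied to the reflected measure, using the symmetry \eqref{Eq_Symmetry} (which conjugates $\tilde\Lc^*$ with itself, reverses the order $\preceq$, and interchanges $\mu_{r_*}$ with $\mu_{-r_*}$), exactly as in the ``analogous assertion'' of Lemma~\ref{lemma:controlled-convergence}\ref{item:controlled-convergence-b}). The point to watch is that $\mu\succ\delta_0$, by \eqref{eq:mono-fact-3}, merely says $\supp(\mu)\subseteq[0,\tfrac23]$ with $\mu\neq\delta_0$; it does \emph{not} by itself provide a $y>0$ with $\mu\succeq\delta_y$, so Lemma~\ref{lemma:controlled-convergence}\ref{item:controlled-convergence-b}) is not directly applicable to $\mu$. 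The remedy is to apply $\tilde\Lc^*$ once and observe that this already pushes the support strictly away from the origin.

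Carrying this out: first I would verify $r_\mu>0$. Since $\overline w$ is strictly increasing (cf.\ \eqref{eq:field_w_y}), $\mu\succeq\delta_0$ gives $\phi(u)=\int_Y\overline w\,d\mu\geq\overline w(0)=0$ by \eqref{eq:mono-fact-1}, and $\int_Y\overline w\,d\mu=0$ would force $\mu=\delta_0$ via \eqref{eq:mono-fact-2a}; hence $\phi(u)>0$, and then $r_\mu=G(\phi(u))>0$ because $G$ is S-shaped. Next, since $\supp(\tilde\Lc^*\mu)\subseteq\sigma_{r_\mu}(\supp\mu)\cup\tau_{r_\mu}(\supp\mu)$ with $\supp\mu\subseteq[0,\tfrac23]$, and $\sigma_{r_\mu},\tau_{r_\mu}$ are increasing with $0<\sigma_{r_\mu}(0)\leq\tau_{r_\mu}(0)$ (read off \eqref{eq:sigma-tau-def} for $r_\mu>0$) while $\tilde\Lc^*$ maps $\mathsf{P}(Y)$ into $\mathsf{P}(Y)$, we get $\supp(\tilde\Lc^*\mu)\subseteq[y_0,\tfrac23]$ with $y_0:=\sigma_{r_\mu}(0)>0$, i.e.\ $\tilde\Lc^*\mu\succeq\delta_{y_0}$ by \eqref{eq:mono-fact-3}. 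Finally, Lemma~\ref{lemma:controlled-convergence}\ref{item:controlled-convergence-b}) applied with this $y_0$ to the measure $\tilde\Lc^*\mu$ yields a null sequence $(\varepsilon_n)_{n\geq0}$ with $\d(\tilde\Lc^{*(n+1)}\mu,\mu_{r_*})=\d(\tilde\Lc^{*n}(\tilde\Lc^*\mu),\mu_{r_*})\leq\varepsilon_n$, whence $\tilde\Lc^{*n}\mu\to\mu_{r_*}$. I expect the only genuinely nontrivial point to be the strictness step producing $r_\mu>0$; everything after it is bookkeeping with the support and an invocation of the previous lemma.
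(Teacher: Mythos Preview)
Your proof is correct and follows essentially the same route as the paper's: part~a) is immediate from Lemma~\ref{lemma:controlled-convergence}\ref{item:controlled-convergence-a}), and for part~b) one first argues that $r_\mu>0$, then observes that one application of $\tilde\Lc^*$ pushes $\supp(\mu)$ into $[\sigma_{r_\mu}(0),\tfrac23]$ so that Lemma~\ref{lemma:controlled-convergence}\ref{item:controlled-convergence-b}) applies to $\tilde\Lc^*\mu$. Your treatment is in fact a bit more careful than the paper's in distinguishing the field $\phi(u)=\int_Y\overline w\,d\mu$ from the parameter $r_\mu=G(\phi(u))$, and in invoking \eqref{eq:mono-fact-2a} for the strictness step; the paper argues the positivity of the field more directly from $\overline w>0$ on $(0,\tfrac23]$ together with $\mu\neq\delta_0$.
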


\begin{proof}
  \ref{item:monotone-convergence-a}) follows immediately from
  Lemma~\ref{lemma:controlled-convergence}\ref{item:controlled-convergence-a}).
  We turn to \ref{item:monotone-convergence-b}): Let $r:=\int_{{Y}}
  \overline{w}\,d\mu$. Then $r>0$ because
  {$\overline{w}>0$ on $(0,2/3]$ and}
  $\mu\succ\delta_0$. Therefore
  $\sigma_{r}(0)>  {\sigma_{0}(0)=} 0$.
  {Fix some ${y}$ as in
  Lemma~\ref{lemma:controlled-convergence}\ref{item:controlled-convergence-b}),
  w.l.o.g. ${y}\in(0,\sigma_r(0))$.}
  Then $\delta_0 \prec \delta_{{y}}
  \preceq \Lc_r^*\mu=\tilde\Lc^*\mu$
  {since $\sigma_{r}$ and $\tau_{r}$ map $\rm{supp}(\mu)$
  into $[\sigma_{r}(0),2/3]$,}
  so that indeed
  $\d(\tilde\Lc^{*n}\mu,\mu_{r_*})\to0$ as $n\to\infty$ by the lemma.
\end{proof}

It remains to investigate the convergence of sequences $(\tilde\Lc^{*n}\mu)_{n\geq0}$
when none of these measures can be compared (in the sense of $\prec$) to
$\delta_0$. To this end let
$[a_{0},b_{0}]:=Y$. Given a sequence of parameters $r_{1},r_{2},\ldots\in R$
define
\[
a_{n}:=\sigma_{r_{n}}\circ\ldots\circ\sigma_{r_{1}}(a_0)\quad\text{and}\quad
b_{n}%
:=\tau_{r_{n}}\circ\ldots\circ\tau_{r_{1}}(b_0)
\]
for $n\geq1$, and, for any $\mu=\mu_0\in\mathsf{P}(Y)=\mathsf{P}[a_0,b_0]$,
consider the measures
\[
\mu_{n}:=\mathcal{L}_{r_{n}}^{\ast}\circ\ldots\circ\mathcal{L}_{r_{1}}^{\ast
}\mu\text{.}%
\]
Then \textrm{supp}$(\mu_{n}) \subseteq$
{ \textrm{supp}$(\mathcal{L}_{r_{n}}^{\ast}  \mu_{n-1}) \subseteq
\sigma_{r_{n}}([ a_{n-1},b_{n-1}]) \cup \tau_{r_{n}}([ a_{n-1},b_{n-1}]) \subseteq$}
$[ a_{n},b_{n}]$ by induction. Write
$[a,b]_{\varepsilon}:=[a-\varepsilon,b+\varepsilon]$,
{where $\varepsilon \geq 0$}. The next lemma exploits
the crucial observation that the two branches $\sigma_r$ and $\tau_r$ have
tangential contact at their common zero $z_r$, see  Figure~\ref{fig:sigma-tau}.

\begin{lemma}[{\textbf{Support intervals close to zeroes}}]
\label{lemma:crucial}
  {There exists some $C \in (0,\infty)$ such that the following holds:}
  Suppose that $(r_{n})_{n\geq1}$ is any given sequence in $R$. If for some
  $\varepsilon\geq0$ {and $\bar{n}(\varepsilon) \geq 0$} we have
  \begin{equation}
    z_{r_{n+1}}\in\lbrack a_{n},b_{n}]_{\varepsilon}\quad \text{for}\ n\geq
    \bar{n}(\varepsilon)\text{,}\tag{${\clubsuit}_\varepsilon$}%
  \end{equation}
  then
  \begin{equation}
    \label{eq:club-1}
    \overline{\lim_{n\rightarrow\infty}}\left\vert b_{n}-a_{n}\right\vert \leq
    C\varepsilon^{2}\text{ ,}
  \end{equation}
  \begin{equation}
    \label{eq:club-2}
    \overline{\lim_{n\rightarrow\infty}}\max\left(
      \left\vert a_{n}\right\vert ,\left\vert b_{n}\right\vert \right)
    \leq
      {\frac{3}{4}\varepsilon+C\varepsilon^{2}}  \text{,}%
  \end{equation}
  and, in case $\varepsilon=0$,
  \begin{equation}
    \label{eq:club-3}
    0\in [a_{n},b_{n}]\quad \text{for}\ n\geq \bar{n}(0)+1 \text{.}
  \end{equation}
\end{lemma}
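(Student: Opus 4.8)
The plan is to exploit two structural features of the branches $\sigma_r,\tau_r$ for $r\in R$: the uniform contraction \eqref{eq:less3/4} (so $0<\sigma_r',\tau_r'\leq\tfrac34$ on $Y$), and the tangential contact of $\sigma_r$ and $\tau_r$ at their common zero $z_r=-r$. First I would record the quantitative form of this contact: combining the identity $\tau_r=\sigma_r/(1-\sigma_r)$ from \eqref{eq:sigma-tau-def} with $\sigma_r(Y)\subseteq Y$ (hence $\sigma_r\leq\tfrac23$ on $Y$, see \eqref{Eq_Invariance_of_Y}) gives
\[
  0\leq \tau_r(y)-\sigma_r(y)=\frac{\sigma_r(y)^2}{1-\sigma_r(y)}\leq 3\,\sigma_r(y)^2 \qquad (y\in Y),
\]
while $\sigma_r(z_r)=\tau_r(z_r)=0$. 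I would also observe at the outset that, since $a_n,b_n\in Y$ and so $0\leq b_n-a_n\leq\tfrac43$ always, all three conclusions are vacuous once $C\varepsilon^2$ (resp.\ $\tfrac34\varepsilon+C\varepsilon^2$) exceeds $\tfrac43$; hence there is no loss in taking $\varepsilon$ small, and the $C$ that emerges will be universal, independent of $\varepsilon$, of the sequence $(r_n)$, and of $\bar n(\varepsilon)$.

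The key device is to feed the branches, at each step, through the point $y_n^*\in[a_n,b_n]$ \emph{closest to} $z_{r_{n+1}}$ (namely $z_{r_{n+1}}$, $a_n$, or $b_n$ according to the position of $z_{r_{n+1}}$). Hypothesis $(\clubsuit_\varepsilon)$ says $z_{r_{n+1}}\in[a_n-\varepsilon,b_n+\varepsilon]$, so $|y_n^*-z_{r_{n+1}}|\leq\varepsilon$ for $n\geq\bar n(\varepsilon)$; since $y_n^*,z_{r_{n+1}}\in Y$ and $\sigma_{r_{n+1}}(z_{r_{n+1}})=\tau_{r_{n+1}}(z_{r_{n+1}})=0$, the contraction bound then gives $|\sigma_{r_{n+1}}(y_n^*)|\leq\tfrac34\varepsilon$ and $|\tau_{r_{n+1}}(y_n^*)|\leq\tfrac34\varepsilon$. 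Writing $d_n:=b_n-a_n$ and decomposing
\[
  d_{n+1}=\tau_{r_{n+1}}(b_n)-\sigma_{r_{n+1}}(a_n)=\bigl(\tau_{r_{n+1}}(b_n)-\tau_{r_{n+1}}(y_n^*)\bigr)+\bigl(\tau_{r_{n+1}}(y_n^*)-\sigma_{r_{n+1}}(y_n^*)\bigr)+\bigl(\sigma_{r_{n+1}}(y_n^*)-\sigma_{r_{n+1}}(a_n)\bigr),
\]
I would bound the outer (nonnegative) terms by $\tfrac34(b_n-y_n^*)$ and $\tfrac34(y_n^*-a_n)$ using \eqref{eq:less3/4} — their sum is $\tfrac34 d_n$ — and the middle term, the gap at $y_n^*$, by $3\,\sigma_{r_{n+1}}(y_n^*)^2\leq 3\cdot\tfrac{9}{16}\varepsilon^2$. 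This yields the clean recursion $d_{n+1}\leq\tfrac34 d_n+\tfrac{27}{16}\varepsilon^2$ for $n\geq\bar n(\varepsilon)$; summing the geometric series, starting from $d_{\bar n(\varepsilon)}\leq\tfrac43$, gives $\limsup_n d_n\leq\tfrac{27}{4}\varepsilon^2$, which is \eqref{eq:club-1}. For \eqref{eq:club-2} the same estimates give $|a_{n+1}|\leq\tfrac34\varepsilon+\tfrac34 d_n$ and $|b_{n+1}|\leq\tfrac34\varepsilon+\tfrac34 d_n$ (from $\sigma_{r_{n+1}}(a_n)\leq\sigma_{r_{n+1}}(y_n^*)\leq\tfrac34\varepsilon$ and $\sigma_{r_{n+1}}(a_n)\geq\sigma_{r_{n+1}}(y_n^*)-\tfrac34(y_n^*-a_n)$, and symmetrically for $b$), and taking $\limsup$ with \eqref{eq:club-1} inserted gives \eqref{eq:club-2} (one may keep the same $C$). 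Finally, for $\varepsilon=0$ I can take $y_n^*=z_{r_{n+1}}\in[a_n,b_n]$, and monotonicity of the branches forces $a_{n+1}=\sigma_{r_{n+1}}(a_n)\leq\sigma_{r_{n+1}}(z_{r_{n+1}})=0=\tau_{r_{n+1}}(z_{r_{n+1}})\leq\tau_{r_{n+1}}(b_n)=b_{n+1}$ for $n\geq\bar n(0)$, which is \eqref{eq:club-3}.

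I expect the main obstacle to be precisely the choice of $y_n^*$. If one instead estimates the gap at $b_n$, one is led to a recursion of the type $d_{n+1}\leq\tfrac34 d_n+c(d_n+\varepsilon)^2$ with $c=3\cdot(\tfrac34)^2$, in which the quadratic forcing is coupled to $d_n$; for these specific constants the larger root of $x=\tfrac34 x+c(x+\varepsilon)^2$ lies below $\operatorname{diam}(Y)=\tfrac43$, so the iteration cannot be bootstrapped from the only a priori bound $d_{\bar n(\varepsilon)}\leq\tfrac43$. Evaluating the gap at the point of $[a_n,b_n]$ nearest $z_{r_{n+1}}$ — using that $(\clubsuit_\varepsilon)$ forces this point within $\varepsilon$ of $z_{r_{n+1}}$, not merely within $d_n+\varepsilon$ — decouples the $\varepsilon^2$ term and removes the difficulty. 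Everything else is routine composition and monotonicity bookkeeping with the uniform constants supplied by \eqref{eq:less3/4} and \eqref{Eq_Invariance_of_Y}.
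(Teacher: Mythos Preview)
Your proof is correct and follows essentially the same route as the paper's: both argue by decomposing $b_{n+1}-a_{n+1}$ through the point of $[a_n,b_n]$ nearest the common zero $z_{r_{n+1}}$, bounding the contraction part by $\tfrac34 d_n$ via \eqref{eq:less3/4} and the gap $\tau_r-\sigma_r$ at that point quadratically in $\varepsilon$ via the tangential contact, then summing the resulting geometric recursion. Your presentation is somewhat cleaner in that the device $y_n^*$ unifies the paper's three-case distinction, and your use of the identity $\tau_r-\sigma_r=\sigma_r^2/(1-\sigma_r)$ together with $\sigma_r(Y)\subseteq Y$ makes the tangency constant explicit ($C=\tfrac{27}{4}$) where the paper merely asserts existence of some $C$; but the underlying argument is the same.
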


\begin{proof}
  Let $\varepsilon\geq0$ and assume
  ($\clubsuit_\varepsilon$). Note that, {for $n \geq \bar{n}=\bar{n}(\varepsilon)$},
  \begin{align*}
    &\text{if }a_{n} >z_{r_{n+1}}\text{, then }0<a_{n+1}<3/4\cdot\varepsilon,\\
    &\text{if }b_{n} <z_{r_{n+1}}\text{, then }-3/4\cdot\varepsilon<b_{n+1}<0,\\
    &\text{if }a_{n} \leq z_{r_{n+1}}\leq b_{n}\text{, then }a_{n+1}\leq0\leq
    b_{n+1}.
  \end{align*}
  {The first implication holds because
  $0 = \sigma_{r_{n+1}}(z_{r_{n+1}}) < \sigma_{r_{n+1}}(a_{n}) = a_{n+1}$ as
  $\sigma_{r_{n+1}}$ increases strictly, and since by ($\clubsuit_\varepsilon$)
  we have $a_{n} \in (z_{r_{n+1}},z_{r_{n+1}}+\varepsilon]$, whence
  $a_{n+1} < \varepsilon \cdot \sup \sigma_{r_{n+1}}' \leq 3\varepsilon/4$
  due to (\ref{eq:less3/4}). Analogously for the second implication.
  The third is immediate from monotonicity. }

  Now, as $\sigma_{r}$ and $\tau_{r}$ share a common zero $z_r$, (\ref{eq:less3/4})
  ensures $ b_{{\bar{n}+m}}-a_{{\bar{n}+m}}
      \leq \frac{3}{4}(b_{{\bar{n}+m}-1}-a_{{\bar{n}+m}-1}) $
  in case $z_r \in [a_{{\bar{n}+m}-1},b_{{\bar{n}+m}-1}]$.
  Otherwise, note that $z_r$ is $\varepsilon$-close to one of the endpoints, w.l.o.g.
  to $a_{{\bar{n}+m}-1}$. Since $\sigma_{r}$ and $\tau_{r}$ are tangent
  at $z_{r}$, there is some $C>0$ s.t.
  $0 \leq \tau_{r_{\bar{n}+m}}(a_{{\bar{n}+m}-1})-\sigma_{r_{\bar{n}+m}}(a_{{\bar{n}+m}-1})
  \leq \frac{C}{4} {\varepsilon}^2$ in this case, while (\ref{eq:less3/4})
  controls the rest of $ b_{{\bar{n}+m}}-a_{{\bar{n}+m}}$.
  In view of $\rm{diam}(Y)=4/3$, we thus obtain, for $m \geq 1$,
  $$ b_{{\bar{n}+m}}-a_{{\bar{n}+m}}
  \leq \frac{3}{4}(b_{{\bar{n}+m}-1}-a_{{\bar{n}+m}-1})
  +\frac{C}{4}\varepsilon^{2}
  \leq \dots \leq \frac{4}{3} \left(\frac{3}{4}\right)^{{m}}+C \varepsilon^{2}. $$
  Statement (\ref{eq:club-1}) follows immediately.
  {}For the asymptotic estimate (\ref{eq:club-2}) on
  $\max\left(\left\vert a_{n}\right\vert ,\left\vert b_{n}\right\vert \right)
   = \max(-a_n,b_n)$, use the above inequality plus the observation that,
  by the first two implications stated in this proof,
  $a_{\bar{n}+m}$ and $-b_{\bar{n}+m}$ never exceed $3 \varepsilon /4$.
  {}Finally, if $\varepsilon=0$, (\ref{eq:club-3}) is straightforward from
  ($\clubsuit_\varepsilon$) and the third implication above.
\end{proof}

{While the full strength of this lemma will only be required
in the next subsection, the $\varepsilon=0$ case enables us to now conclude the}

\begin{proof}[{Proof of Proposition~\ref{prop:summary}.}]
  {The conclusions of} (1) and (3) follow from
  Corollary~\ref{coro:monotone-convergence}. If
  neither of these two cases applies, then the assumption of (2) must be
  satisfied, and so condition ($\clubsuit_0$) of Lemma~\ref{lemma:crucial} is
  satisfied with $\bar{n}(0)=0$. Hence $\lim_{n\to\infty}\max(|a_n|,|b_n|)=0$
  by (\ref{eq:club-2}). As the $\tilde\Lc^{*n}\mu$ are
  supported in $[a_n,b_n]$, these measures must converge to $\delta_0$.
\end{proof}

\section{Proofs: {the self-consistent PFO for} the infinite{-size} system}
\label{sec:infinite-sys-proofs}

\subsection{{Shadowing densities and the asymptotics of $\widetilde{P}$ on $\mathcal{D}$}}
We are now going to clarify the asymptotics of the self-consistent PFO on the
set $\mathcal{D}$ of all densities, proving

\begin{prop} [ {\textbf{Long-term behaviour of }$\widetilde{P}$\textbf{\ on
  }$\mathcal{D}$} ]
  \label{prop:summary2}
  {For every $u \in \mathcal{D}$, the sequence $(\widetilde{P}^n u)_{n\geq0}$
  converges in $L_1(X,\lambda)$, and}
  $$
  \lim_{n\to\infty} \widetilde{P}^n u
  \quad
  \begin{cases}
    \,\,=\,u_0& \text{in the stable regime,}\\
    \,\,\in \{u_{-r_*},u_0,u_{r_*}\}&\text{in the bistable regime.}
  \end{cases}
  $$
  The basins $\{u\in\mathcal{D}: \lim_{n\to\infty} \widetilde{P}^n u = u_{\pm
    r_*}\}$ of the stable fixed points $u_{\pm r_*}$ are $L_1$-open.
\end{prop}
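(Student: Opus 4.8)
The plan is to reduce the general case $u \in \mathcal{D}$ to the already-understood case $u \in \mathcal{D}'$ by constructing, for each $u$, a "shadowing" density $v \in \mathcal{D}'$ whose forward orbit under $\widetilde{P}$ stays close to that of $u$. The key observation is that although $\widetilde{P}$ does not contract the $L_1$-metric, the \emph{linear} Perron-Frobenius operators $P_r$ do contract in the relevant sense: since each branch of $T_r$ is onto with derivative bounded below (by the uniform expansion estimate of \S\ref{subsec:family-def}), iterating $P_{r_0} \circ \cdots \circ P_{r_{n-1}}$ along \emph{any} sequence of parameters $r_0,\dots,r_{n-1}\in R$ smooths out an arbitrary density towards the invariant-density class. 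More precisely, for any $u \in \mathcal{D}$ one can write $\widetilde{P}^n u = P_{s_{n-1}} \cdots P_{s_0} u$ where $s_j = G(\phi(\widetilde{P}^j u))$; the point is that $P_{s_{n-1}} \cdots P_{s_0}$ applied to $u$ and applied to $1_X$ (or to a well-chosen $w_y$) differ by an amount that decays geometrically, because the difference $u - 1_X$ has zero integral and the composed transfer operator contracts zero-average functions at a uniform rate governed by the $\BV$ (or oscillation) norm. This is the standard Lasota--Yorke / cone argument, and here it is especially clean because the branches are fractional-linear and full.

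First I would make this precise: show there is $\theta \in (0,1)$ and $C>0$ such that for any finite parameter sequence $(r_j) \subset R$ and any $u \in \mathcal{D}$, one has $\| P_{r_{n-1}}\cdots P_{r_0} u - P_{r_{n-1}}\cdots P_{r_0} \tilde{u}\|_{L_1} \le C\theta^n$, where $\tilde{u} \in \mathcal{D}'$ is chosen appropriately (e.g. $\tilde u = 1_X = w_0$, or $\tilde u$ with representing measure a point mass). Because $\widetilde{P}$ acts through $P_{r}$ with $r$ determined by the field, and the field is a bounded linear functional, one obtains $\|\widetilde{P}^n u - \widetilde{P}^n \tilde u\|_{L_1} \to 0$ \emph{provided} we can also control the drift in the parameter: if $\phi(\widetilde{P}^j u)$ and $\phi(\widetilde{P}^j \tilde u)$ stay close then the two parameter sequences stay close and a Gronwall-type estimate using Lemma~\ref{lemma:stability} (transferred from representing measures, or proved directly for $P_r$) closes the loop. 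Since $\tilde u \in \mathcal{D}'$, Proposition~\ref{prop:summary} tells us $\widetilde{P}^n \tilde u$ converges to one of $u_0, u_{\pm r_*}$; hence $\widetilde{P}^n u$ converges to the same limit. In the stable regime Proposition~\ref{prop:summary} forces the limit to be $u_0$ regardless of $\tilde u$, giving the first case immediately.

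For the $L_1$-openness of the basins of $u_{\pm r_*}$ in the bistable regime, I would argue as follows. Suppose $\widetilde{P}^n u \to u_{r_*}$. Since $u_{r_*}$ corresponds to the representing measure $\mu_{r_*} \succ \delta_0$ with support strictly inside $(0,2/3]$, for large $n$ the density $\widetilde{P}^n u$ has field $\phi(\widetilde{P}^n u)$ bounded away from $0$, say $\ge 2\eta > 0$. Using the continuity of $u \mapsto \widetilde{P}^n u$ in $L_1$ (each $P_r$ is $L_1$-continuous and $r \mapsto r(u)$ is continuous, so finitely many iterates compose to an $L_1$-continuous map) together with continuity of the field, there is an $L_1$-neighbourhood $\mathcal{V}$ of $u$ such that $\phi(\widetilde{P}^N v) \ge \eta$ for all $v \in \mathcal{V}$, for a suitable fixed $N$. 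Now I would invoke the controlled-convergence mechanism behind Lemma~\ref{lemma:controlled-convergence}\ref{item:controlled-convergence-b}): once a density has field $\ge \eta$, one more application of $\widetilde{P}$ pushes its representing-measure structure (or, at the level of densities, the support of the part of the orbit that has entered $\mathcal{D}'$-like behaviour) past $\delta_{y}$ for a fixed small $y = y(\eta)>0$, and monotonicity then forces convergence to $u_{r_*}$. Combining with the shadowing argument for the tail, every $v \in \mathcal{V}$ satisfies $\widetilde{P}^n v \to u_{r_*}$, so the basin is open; the basin of $u_{-r_*}$ is handled by the symmetry \eqref{Eq_Symmetry}.

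The main obstacle I anticipate is the shadowing estimate itself — specifically, making rigorous the claim that an \emph{arbitrary} $L_1$ density is driven, under composition of the non-autonomous sequence $P_{s_0}, P_{s_1}, \dots$, into the $\mathcal{D}'$ world fast enough that the drift in the self-consistent parameter (which depends on the whole past orbit) does not destroy the contraction. This requires a Lasota--Yorke inequality uniform over $r \in R$ (here easy, since $\inf T_r' \ge 2(2-|r|)/(2+|r|)$ is bounded away from $1$ uniformly on $R$ and the distortion of fractional-linear branches is uniformly controlled) plus a careful bookkeeping of how the $O(\theta^n)$ error feeds back through $\phi$ into the parameter and hence, via Lemma~\ref{lemma:stability}, into the next-step error — a geometric series that converges precisely because $\theta < 1$ and the Lipschitz constants involved are finite. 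Everything else — $L_1$-continuity of finite iterates, the transfer from Proposition~\ref{prop:summary}, and the openness via the field-positivity trap — is then routine.
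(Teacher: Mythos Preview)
Your central contraction claim is false and the whole scheme collapses with it. You assert that there exist $\theta\in(0,1)$ and $C>0$ such that $\|P_{r_{n-1}}\cdots P_{r_0}u-P_{r_{n-1}}\cdots P_{r_0}\tilde u\|_{L_1}\le C\theta^n$ for \emph{any} $u\in\mathcal D$. But the Lasota--Yorke spectral gap lives in $\BV$, not in $L_1$; the transfer operator has $L_1$-norm equal to $1$ and no spectral gap there. For a merely $L_1$ density $u$ the convergence $P_r^n u\to u_r$ holds but can be arbitrarily slow, so no uniform (or even $u$-dependent) geometric rate is available. Without that rate, your Gronwall loop does not close: the feedback through $\phi$ and $G$ may amplify errors faster than the non-existent contraction kills them.

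There is a second, independent, problem. Even if your contraction held, your choice $\tilde u=1_X=w_0$ lies in $W^s(u_0)$, so your argument would give $\widetilde P^n u\to u_0$ for \emph{every} $u\in\mathcal D$, contradicting the bistability you are trying to prove. You cannot pick a single $\tilde u\in\mathcal D'$ a priori, because you do not yet know which basin $u$ is in; and in the bistable regime the nonlinear operator genuinely has an unstable direction at $u_0$, so a global Gronwall estimate of the type you envisage is impossible in principle.

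The paper avoids both problems by a different mechanism. It does not compare $u$ with one fixed $\tilde u$; instead, for each $N$ it forms the conditional expectation of $u$ on the branch partition of $T_{r_N}\circ\cdots\circ T_{r_1}$, pushes this forward by $P_{r_N}\cdots P_{r_1}$, and observes that the result $v_0^{(N)}$ lies in $\mathcal D'$ with $\|\widetilde P^{N}u-v_0^{(N)}\|_{L_1}\le\eta_N\to0$ (martingale convergence, no rate required). Crucially, $v_k^{(N)}$ is then evolved by the parameter sequence generated by $u$ itself, so the shadowing error stays equal to $\eta_N$ for all $k$. A separate estimate (Lemma~\ref{lemma:shadow-estimate}) compares $v_k^{(N)}$ with the self-consistent orbit $\widetilde P^k v_0^{(N)}$; here the bound $\eta_N\Delta_n$ is allowed to blow up in $n$, because one fixes $n$ first (using the uniform-in-initial-condition convergence of Lemma~\ref{lemma:controlled-convergence}) and only then takes $N$ large. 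In the bistable regime, deciding which of $u_0,u_{\pm r_*}$ is the limit requires the support-interval machinery of Lemma~\ref{lemma:crucial} (condition $(\clubsuit_\varepsilon)$), which is entirely absent from your proposal; your openness argument, relying on ``field $\ge\eta$'' as a trapping condition, is also insufficient, since positive field does not by itself force the representing-measure support to the right of $0$.
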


(The set of densities attracted to $u_0$ in the bistable regime
will be discussed in \S\ref{subsec:stablemfd} below.)\\

{We begin with some notational preparations. Throughout, we fix some}
$u\in\mathcal{D}$. The iterates $\widetilde{P}^n u$ define parameters
$r_n:=G(\phi(\widetilde{P}^{n-1}u))$ ($n\geq1$). With this notation, $\tilde
P^nu=P_{r_{n}} \dots P_{r_1}u$.

We let $\pi_{N}$, $N\geq1$, denote the partition of $X$ into
monotonicity intervals of $T_{r_{N}}\circ\ldots\circ T_{r_{1}}$.
{Note that each branch of this map is a fractional
linear bijection from a member of $\pi_{N}$ onto $X$. Since the $T_r$,
$r\in R$, have a common uniform expansion rate, we see that }
$\mathrm{diam}(\pi_{N})\rightarrow0$, and
{hence, by the standard martingale convergence theorem, }
$\mathbb{E}[u\parallel\sigma(\pi_{N})]\rightarrow u$ in $L_{1}({X},\lambda)$, that
is,
\begin{equation}
  \label{eq:eta-n-cge-0}
  \eta_{N}:=\left\| \mathbb{E}[u\parallel\sigma(\pi_{N})]-u\right\|
  _{L_{1}({X},\lambda)}\longrightarrow0\quad \text{ as }N\rightarrow\infty\text{.}%
\end{equation}
Write
\[
v_{{k}}^{(N)}:=P_{r_{N+{k}}} \ldots  P_{r_{1}}
\left(  \mathbb{E}[u\parallel\sigma(\pi_{N})]\right)\quad  \text{for}\
{k}\geq0\ \text{and}\ N\geq1 ,
\]
and observe that $v_{{k}}^{(N)}\in\Dc'$ because it is a weighted sum of images of
the constant function $1$ under various fractional linear branches
{(recall (\ref{eq:transfer-by-fract-lin}) and (\ref{Eq_Invariance_of_Y}))}.

For $N=0$ we
let $v_0^{(0)}:=u_{r_*}$
and write, in analogy to the notation introduced
for $N\geq1$, $v_k^{(0)}:=P_{r_{{k}}} \ldots  P_{r_{1}}(v_0^{(0)})$ and $\eta_0:=\|v_0^{(0)}-u\|_{L_1(X,\lambda)}$.
Obviously, $v_k^{(0)}\in\Dc'$ for all $k\geq0$.

Hence there are measures $\mu_{{k}}^{(N)}\in\mathsf{P}(Y)$ such that
$v_{{k}}^{(N)}=\int_{{Y}} w_{\bullet}\,d\mu_{{k}}^{(N)}$.
Observe also that
\begin{equation}
  \label{eq:shadow-estimate}
  \|\widetilde{P}^{N+{k}}u-v_{{k}}^{(N)}\|_{L_{1}({X},\lambda)}
  \leq\eta_N\quad\text{for all ${k}\geq0$ and $N\geq0$,}
\end{equation}
{as $\| P_r \|=1$ for all $r$}, so that in particular
\begin{equation}
   \label{eq:no-good-name}
   |\phi(v_{{k}}^{(N)})-\phi(\widetilde{P}^{N+{k}}u)|
   \leq  {\eta_{N}}, \,
   | G(\phi(v_{{k}}^{(N)}))-r_{N+{k}+1}|
   \leq \left\Vert G^{\prime}\right\Vert _{\infty}\cdot{{\eta_{N}}}
\end{equation}
{In addition, we need to understand the distances }
$$ \Delta_n^{(N,k)} :=
    \|v_{n+k}^{(N)}-\widetilde{P}^nv_k^{(N)}\|_{L_{1}({X},\lambda)} . $$
{which, in fact, admit some control which is uniform in $k$:}

\begin{lemma}[{\textbf{Shadowing control}}]
\label{lemma:shadow-estimate}
{There is a non-decreasing sequence $(\Delta_n)_{n\geq 0}$ in $(0,\infty)$,
not depending on $u\in \Dc$,
such that }
\begin{equation}
  \label{eq:shadow-estimate-new}
  {\Delta_n^{(N,k)} \leq \eta_N \cdot \Delta_n\ \quad
  \text{for}\ k,n\geq0\ \text{and}\
    N\geq0
}
\end{equation}
\end{lemma}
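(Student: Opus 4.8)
The plan is to establish \eqref{eq:shadow-estimate-new} by induction on $n$, via a Gronwall-type recursion whose multiplicative constant does not involve $u$, $N$ or $k$. Fix $N\geq0$ and $k\geq0$, and recall that $v_{n+k}^{(N)}=P_{r_{N+k+n}}\cdots P_{r_{N+k+1}}v_k^{(N)}$ is obtained from $v_k^{(N)}$ by applying the transfer operators with the ``external'' parameters $r_{N+k+j}$ coming from the orbit of $u$ under $\widetilde{P}$, whereas $\widetilde{P}^nv_k^{(N)}=P_{s_n}\cdots P_{s_1}v_k^{(N)}$ uses the ``internal'' parameters $s_j:=G(\phi(\widetilde{P}^{j-1}v_k^{(N)}))$ generated along the orbit of $v_k^{(N)}$ itself. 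Both iterations start at the same density $v_k^{(N)}\in\Dc'$, and since $\widetilde{P}$ preserves $\Dc'$, every intermediate density $\widetilde{P}^jv_k^{(N)}$ as well as every $v_{k+j}^{(N)}$ again lies in $\Dc'$; this is what will let us control a single iteration step by the explicit IFS estimates.

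For the inductive step I would split the one-step discrepancy at level $j$ into two parts: changing the input density from $\widetilde{P}^jv_k^{(N)}$ to $v_{j+k}^{(N)}$ while keeping the parameter $r_{N+k+j+1}$ fixed, and changing the parameter from $r_{N+k+j+1}$ to $s_{j+1}$ while keeping the density $\widetilde{P}^jv_k^{(N)}$ fixed. The first contribution has $L_1$-norm at most $\Delta_j^{(N,k)}=\|v_{j+k}^{(N)}-\widetilde{P}^jv_k^{(N)}\|_{L_1(X,\lambda)}$, because every $P_r$ is a Markov operator and hence $L_1$-nonexpansive. For the second I would write $\widetilde{P}^jv_k^{(N)}=\int_Y w_\bullet\,d\mu$ with $\mu\in\mathsf{P}(Y)$ and combine \eqref{eq:norm_W_estimate} with Lemma~\ref{lemma:stability} (applied with equal representing measures $\mu=\nu$) to obtain
\[
  \|P_{r_{N+k+j+1}}\widetilde{P}^jv_k^{(N)}-P_{s_{j+1}}\widetilde{P}^jv_k^{(N)}\|_{L_1(X,\lambda)}
  \leq K\,\d(\Lc_{r_{N+k+j+1}}^*\mu,\Lc_{s_{j+1}}^*\mu)
  \leq K\kappa_2\,|r_{N+k+j+1}-s_{j+1}|.
\]

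It then remains to bound $|r_{N+k+j+1}-s_{j+1}|$. Since $r_{N+k+j+1}=G(\phi(\widetilde{P}^{N+k+j}u))$ and $s_{j+1}=G(\phi(\widetilde{P}^jv_k^{(N)}))$, the mean value inequality for $G$ together with $|\phi(f)-\phi(g)|\leq\tfrac12\|f-g\|_{L_1(X,\lambda)}$ and the triangle inequality reduce matters to the a priori shadowing bound $\|\widetilde{P}^{N+k+j}u-v_{j+k}^{(N)}\|_{L_1(X,\lambda)}\leq\eta_N$ (which is \eqref{eq:shadow-estimate}) and $\|v_{j+k}^{(N)}-\widetilde{P}^jv_k^{(N)}\|_{L_1(X,\lambda)}=\Delta_j^{(N,k)}$, yielding $|r_{N+k+j+1}-s_{j+1}|\leq\tfrac12\|G'\|_\infty(\eta_N+\Delta_j^{(N,k)})$. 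Putting the pieces together gives a recursion
\[
  \Delta_{j+1}^{(N,k)}\leq(1+c)\,\Delta_j^{(N,k)}+c\,\eta_N,
  \qquad c:=\tfrac12\,K\kappa_2\,\|G'\|_\infty<\infty
\]
(finiteness of $c$ following, e.g., from Assumption~I), with $c$ independent of $u$, $N$, $k$.

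Since $\Delta_0^{(N,k)}=\|v_k^{(N)}-\widetilde{P}^0v_k^{(N)}\|_{L_1(X,\lambda)}=0$, solving this linear recursion gives $\Delta_n^{(N,k)}\leq\eta_N\bigl((1+c)^n-1\bigr)$, so that $\Delta_n:=(1+c)^n$ does the job: it is non-decreasing, strictly positive, does not depend on $u$, and satisfies \eqref{eq:shadow-estimate-new} for all $k,n\geq0$ and $N\geq0$ --- the case $N=0$ being covered because $v_0^{(0)}=u_{r_*}\in\Dc'$ and \eqref{eq:shadow-estimate} holds there as well. The one point that really matters, and where care is needed, is that the continuous dependence of $P_r$ on the parameter can only be invoked along densities from $\Dc'$, where the Herglotz--Pick--Nevanlinna representation turns $P_r$ into the explicit IFS $\Lc_r^*$; there is no comparably cheap bound for $\|P_rf-P_sf\|_{L_1(X,\lambda)}$ at a general $f\in L_1(X,\lambda)$. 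Everything else is routine bookkeeping with $L_1$-nonexpansiveness and the triangle inequality.
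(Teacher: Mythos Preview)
Your argument is correct. It differs from the paper's proof in how the telescoping is organized, and your version is in fact a bit cleaner.

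The paper works entirely at the level of representing measures: it writes both $v_{n+k}^{(N)}$ and $\widetilde P^n v_k^{(N)}$ as $n$-fold products of $\Lc_r^*$'s applied to $\mu_k^{(N)}$, then applies Lemma~\ref{lemma:stability} $n$ times in Wasserstein distance, obtaining
\[
\Delta_n^{(N,k)}\leq K\kappa_2\sum_{i=0}^{n-1}\kappa_1^{\,i}\,|r_{N+n+k-i}-r_{n-i}^{(N,k)}|
\leq K\|G'\|_\infty\kappa_2\sum_{i=0}^{n-1}\kappa_1^{\,i}\bigl(\eta_N+\Delta_{n-i-1}^{(N,k)}\bigr),
\]
and then closes this by a Gronwall step on $\widehat\Delta_n^{(N,k)}:=\max_{i<n}\Delta_i^{(N,k)}$, ending with $\Delta_n\sim nK_n^n$. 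Thus the paper uses \emph{both} constants $\kappa_1,\kappa_2$ of Lemma~\ref{lemma:stability}.

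You instead do a one-step induction in $L_1$: the ``same parameter, different density'' part is handled by the $L_1$-nonexpansiveness of each $P_r$ (no need to go to $\Dc'$), and only the ``same density, different parameter'' part invokes the IFS, via Lemma~\ref{lemma:stability} with $\mu=\nu$, so only $\kappa_2$ enters. This yields the simple recursion $\Delta_{j+1}^{(N,k)}\leq(1+c)\Delta_j^{(N,k)}+c\,\eta_N$ and the explicit bound $\Delta_n=(1+c)^n$. Your observation that $\widetilde P^j v_k^{(N)}\in\Dc'$ (because $v_k^{(N)}\in\Dc'$ and $\widetilde P\Dc'\subseteq\Dc'$) is exactly the point that makes this work, and your closing remark that Lipschitz dependence of $P_r$ on $r$ is only available on $\Dc'$ is well taken.
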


\begin{proof}
{Let} $r_n^{(N,k)}:=G(\phi(\widetilde{P}^{n-1}v_k^{(N)}))$,
{and observe that \eqref{eq:norm_W_estimate} entails}
\begin{displaymath}
  \begin{split}
    \Delta_n^{(N,k)}
    =&
    \|P_{r_{N+n+k}}\dots P_{r_{N+1+k}}v_k^{(N)}-P_{r_{n}^{(N,k)}}\dots
    P_{r_{1}^{(N,k)}}v_k^{(N)}\|_{L_{1}({X},\lambda)}\\
    \leq&
    K\cdot\d(\Lc^*_{r_{N+n+k}}\dots \Lc^*_{r_{N+1+k}}\mu_k^{(N)}
    , \Lc^*_{r_{n}^{(N,k)}}\dots\Lc^*_{r_{1}^{(N,k)}}\mu_k^{(N)}).
  \end{split}
\end{displaymath}
Applying Lemma~\ref{lemma:stability} repeatedly, we therefore see that
\begin{displaymath}
  \begin{split}
    \Delta_n^{(N,k)}
    &\leq
    K\,\kappa_2\sum_{i=0}^{n-1}\kappa_1^{i}\,|r_{N+n+k-i}-r_{n-i}^{(N,k)}|\\
    &=
    K\,\kappa_2\sum_{i=0}^{n-1}\kappa_1^{i}\,|G(\phi(\tilde
    P^{N+n+k-i-1}u))-G(\phi(\widetilde{P}^{n-i-1}v_k^{(N)}))|\\
    &\leq
    K\,\|G'\|_\infty\,\kappa_2\sum_{i=0}^{n-1}\kappa_1^{i}\,
    \|\tilde  P^{N+n+k-i-1}u-\widetilde{P}^{n-i-1}v_k^{(N)}\|_{L_{1}({X},\lambda)}\\
    &\leq
    K\,\|G'\|_\infty\,\kappa_2\sum_{i=0}^{n-1}\kappa_1^{i}
    \left(\eta_N+\Delta_{n-i-1}^{(N,k)}\right) ,
  \end{split}
\end{displaymath}
where the last inequality uses \eqref{eq:shadow-estimate}.
{(Recall that $\widetilde{P}$ does not contract on $L_{1}(X,\lambda)$,
whence the need for the $\Delta_{n-i-1}^{(N,k)}$-term.)
Letting $K_n:=1+K\,\|G'\|_\infty\,\kappa_2\sum_{i=0}^{n-1}\kappa_1^{i}$ and }
$\widehat{\Delta}^{(N,k)}_n:=\max\{\Delta_i^{(N,k)}:i=0,\dots,n-1\}$,
{we thus obtain }
\begin{equation}
  \label{eq:punchline-1}
  \widehat{\Delta}^{(N,k)}_n
  \leq
  K_n\cdot(\eta_N+\widehat{\Delta}^{(N,k)}_{n-1})
  \leq\dots\leq
  \eta_N\cdot n\,K_n^n ,
\end{equation}
{which proves our assertion.}
\end{proof}

{We can now complete the }

\begin{proof}[{Proof of Proposition \ref{prop:summary2}.}]
We begin with the easiest situation:\\

\paragraph{\textbf{{The stable regime}}}
We have to show that
$\lim_{n\to\infty}\|\widetilde{P}^nu-u_0\|_{L_{1}({X},\lambda)}=0$.
{Take any $\varepsilon >0$. Let $(\varepsilon_n)_{n\geq0}$ be the
sequence provided by Lemma~\ref{lemma:controlled-convergence}\ref{item:controlled-convergence-a}),
and $K$ the constant from (\ref{eq:norm_W_estimate}). There is some $n$ (henceforth fixed) for
which $K \varepsilon_n < \varepsilon/3$. In view of (\ref{eq:eta-n-cge-0}), there is some $N_0$
such that $(1+\Delta_n) \eta_N < 2\varepsilon/3$ whenever $N \geq N_0$. We then find, using
(\ref{eq:shadow-estimate}), Lemma \ref{lemma:shadow-estimate}, and (\ref{eq:norm_W_estimate}) together with
Lemma~\ref{lemma:controlled-convergence}\ref{item:controlled-convergence-a}) that }
\begin{equation}
\label{eq:long-estimate-1}
\begin{split}
  \|\widetilde{P}^{N+n}u-u_0\|_{L_{1}({X},\lambda)}
  &\leq
  \eta_N+\|v_n^{(N)}-\widetilde{P}^nv_0^{(N)}\|_{L_{1}({X},\lambda)}
  +{K} \varepsilon_n\\
  &\leq
  \eta_N+{\Delta_n} \eta_N + {K} \varepsilon_n\\
  &<
  \varepsilon\ \quad \text{for}\ N\geq N_0,
\end{split}
\end{equation}
which completes the proof in this case.\\

\paragraph{\textbf{{The bistable regime}}}

Given the sequence $r_n=G(\phi(\widetilde{P}^{n-1}u))$ as before, we let
$[a_n,b_n]\subseteq Y$ be the sequence of parameter intervals from
Lemma~\ref{lemma:crucial}. Observe that the measures representing the
$v_n^{(N)}$ satisfy \textrm{supp}$({\mu_{n}^{(N)}})\subseteq [a_{n},b_{n}]$
for all $n$ and $N$.
We now distinguish two cases:\\

\emph{First case:} For all $\varepsilon>0$ we have ($\clubsuit_{\varepsilon}$)
from Lemma~\ref{lemma:crucial}.  {Then, for any $\varepsilon>0$, the lemma
  ensures that there is some $n$ (henceforth fixed) with $\max\left(
    \left\vert a_{n}\right\vert ,\left\vert b_{n}\right\vert \right) <
  \varepsilon / 4K$, so that also $\d(\mu_n^{(N)},\delta_0) < \varepsilon
  /2K$, whatever $N$. Due to (\ref{eq:eta-n-cge-0}), $\eta_N < \varepsilon /2$
  for $N \geq N_0$, and we find, using (\ref{eq:shadow-estimate}) and
  (\ref{eq:norm_W_estimate}),}
\begin{displaymath}
\begin{split}
  \|\widetilde{P}^{N+n}u-u_0\|_{L_{1}({X},\lambda)}
  &\leq
  \eta_N+\|v_n^{(N)}-u_0\|_{L_{1}({X},\lambda)}\\
  &\leq
  \eta_N+ K \d(\mu_n^{(N)},\delta_0)\\
  &<
  \varepsilon\ \quad \text{for}\ N\geq N_0,
\end{split}
\end{displaymath}
{showing that indeed $\widetilde{P}^{n}u \to u_0 $.}\\

\emph{Second case:} there is some $\overline{\varepsilon}>0$ s.t.
($\clubsuit_{\overline{\varepsilon}}$) is violated in that, say,
\begin{equation}
  \label{eq:exceptional:n}
  z_{r_{n}}<a_{n-1}-\overline{\varepsilon}
\end{equation}
for infinitely many $n$.
{We show that this implies
$\widetilde{P}^{n}u \to u_{r_*} $.
(If ($\clubsuit_{\overline{\varepsilon}}$) is violated
in the other direction, $\widetilde{P}^{n}u \to u_{-r_*} $
then follows by symmetry.)}\\

{In view of (\ref{eq:no-good-name}), and since (due to
$\mu_k^{(N)} \succeq \delta_{a_{N+k}}$, (\ref{eq:mono-fact-1}), and
(\ref{Eq_mean_for_convex_combi}))
$\phi(v_k^{(N)}) \geq \phi(w_{a_{N+k}}) = \bar{w}(a_{N+k})$, we have  }
\begin{displaymath}
\begin{split}
r_{N+k+1} &=      G(\phi(\widetilde{P}^{N+k}u)) \geq G(\phi(v_k^{(N)})-\eta_N)\\
          &\geq   {G(\phi(v_k^{(N)}))-\left\Vert G^{\prime}\right\Vert _{\infty} \eta_N}
          \geq G(\overline{w}(a_{N+k})) - {\left\Vert G^{\prime}\right\Vert _{\infty}} \eta_N ,
\end{split}
\end{displaymath}
and hence, observing that $\|\frac{\partial\sigma_r}{\partial
  r}\|_\infty\leq1$ and writing $\tilde\sigma(y) :=
\sigma_{G(\overline{w}(y))}(y)$ for $y \in Y$,
\begin{equation}
  \label{eq:sigma-tilde}
  a_{N+k+1} =\sigma_{r_{N+k+1}}(a_{N+k})
  \geq \tilde\sigma(a_{N+k})
  - {\left\Vert G^{\prime}\right\Vert _{\infty}} \eta_{N}
\end{equation}
for all $N$ and $k$. Note that $\tilde\sigma'(0)=\sigma_0'(0)+
\frac{\partial}{\partial r} \sigma_r(0)|_{r=0} \cdot
G'(0)\cdot\overline{w}'(0)=\frac12+\frac12\cdot G'(0)\cdot\frac16>1$, see
\eqref{eq:dp_dr} and \eqref{eq:field_w_y}. {Therefore, if we} fix some
$\omega\in(1,\tilde\sigma'(0))$, there exists some $a^*>0$ such that
$\tilde\sigma(a)\geq\omega a$ for all
$a\in(0,a^*{]}$. Without loss of generality, $\overline\varepsilon/3<a^*$.\\

Now fix $N$ such that ${\left\Vert G^{\prime}\right\Vert _{\infty}}
\eta_N<(\omega-1)\,\overline\varepsilon/3$, and let $N+n+1$ {satisfy
  (\ref{eq:exceptional:n})}.  Due to \eqref{eq:z_r}, we have
\begin{equation}
   \label{eq:a-lower}
   a_{N+n+1}=\sigma_{r_{N+n+1}}(a_{N+n}) >
   \sigma_{r_{N+n+1}}(z_{r_{N+n+1}}+\overline\varepsilon)>\overline\varepsilon/3 .
\end{equation}
Now, if $a_{N+n+1}\geq a^*$, then, by (\ref{eq:sigma-tilde}),
\begin{displaymath}
  \begin{split}
   a_{N+n+2} &\geq \tilde\sigma(a_{N+n+1}) - {\left\Vert G^{\prime}\right\Vert _{\infty}} \eta_N\\
             &\geq \tilde\sigma(a^*) - (\omega-1)\overline\varepsilon/3\\
             &\geq \omega a^*-(\omega-1)a^*=a^*>\overline\varepsilon/3.
  \end{split}
\end{displaymath}
Otherwise, $a_{N+n+1} \in (0,a^*)$, and again
\begin{displaymath}
  \begin{split}
   a_{N+n+2} &\geq \tilde\sigma(a_{N+n+1}) - {\left\Vert G^{\prime}\right\Vert _{\infty}}  \eta_N\\
             &> \omega\overline\varepsilon/3-(\omega-1)\,\overline\varepsilon/3
                = \overline\varepsilon/3.
  \end{split}
\end{displaymath}
It
follows inductively that $\liminf_ka_{k}\geq\overline\varepsilon/3$.
More precisely: If $N_1$ and $n_1$ are integers such that 
  $\eta_{N_1} < \varrho:={\left\Vert G^{\prime}\right\Vert _{\infty}}^{-1}
  (\omega-1)\,\overline\varepsilon/3$
and
$a_{N_1+n_1}>z_{r_{N_1+n_1+1}}+\overline\varepsilon$, then
\begin{displaymath}
  a_k > \overline\varepsilon /3 \quad \text{for } k>N_1+n_1.
\end{displaymath}
  In particular, if the initial density $u$ is such that
  $\eta_0=\|u_{r_*}-u\|_{L_1(X,\lambda)}<\varrho$, then
  we can take $N_1=0$.
\\

Next,
fix ${y}:=\overline\varepsilon/6\in(0,\overline\varepsilon/3)$,
and
choose a sequence $(\varepsilon_n)_{n\geq0}$ according to
Lemma~\ref{lemma:controlled-convergence}\ref{item:controlled-convergence-b}).
Then $0 < {y} < a_k$ and hence
$\delta_0\prec\delta_{{y}}\preceq\mu_k^{(N)}$ for
$k>N_1+n_1$ so that the lemma
implies $\d(\tilde\Lc^{*n}\mu_k^{(N)},\mu_{r_*})\leq\varepsilon_n$.
Hence, {by (\ref{eq:norm_W_estimate}),}
\begin{displaymath}
  \|\tilde P^nv_k^{(N)}-u_{r_*}\|_{L_{1}({X},\lambda)}
  \leq K\cdot\varepsilon_n\quad\text{for
     $k>N_1+n_1$ and all $n,{N}$.}
\end{displaymath}
We then find, {using (\ref{eq:shadow-estimate}) and
  Lemma~\ref{lemma:shadow-estimate},}
\begin{equation}
  \label{eq:N+k+n-estimate}
  \begin{split}
    \|\widetilde{P}^{N+k+n}u-u_{r_*}\|_{L_{1}({X},\lambda)} &\leq
    \eta_N+\|v_{k+n}^{(N)}-\tilde
      P^nv_k^{(N)}\|_{L_{1}({X},\lambda)}+K\cdot\varepsilon_n\\
    &\leq
    \eta_N+{\Delta_n \eta_N}+K\cdot\varepsilon_n
  \end{split}
\end{equation}
{for $k>N_1+n_1$ and all $n,N$.}
Now $\lim_{n\to\infty}\|\widetilde{P}^nu-u_{r_*}\|_{L_{1}({X},\lambda)}=0$
{follows as in the} {stable} case.\\

It remains to prove that the basin of attraction of $u_{r_*}$ is $L_1$-open.
{(Then, by symmetry, the same is true for $u_{-r_*}$.)}
As $\widetilde P$ is $L_1$-continuous, it suffices to show that this basin
contains an open $L_1$-ball centered at $u_{r_*}$. To check the latter
condition, first notice that $z_{r_*}<0<\supp(\mu_{r_*})$ so that there is
some $n_1>0$ such that $\sigma_{r_*}^{n_1}(a_0)>0$. As we can assume
w.l.o.g. that $\overline\varepsilon<|z_{r_*}|$, we have
$\sigma_{r_*}^{n_1}(a_0)>z_{r_*}+\overline\varepsilon$, and as $\widetilde P$
is $L_1$-continuous, there is some $\overline\varrho\in(0,\varrho)$ such that
$a_{r_{n_1}}=\sigma_{r_{n_1}}\circ\dots\circ\sigma_{r_1}(a_0)>z_{r_{n_1}}+\overline\varepsilon$
whenever $\|u-u_{r_*}\|_{L_1(X,\lambda)}<\overline\varrho$.  Therefore we can
continue to argue as in the previous paragraph (using the present $n_1$ and
$N_1=0$) to conclude that $\lim_{n\to\infty}\|\widetilde
P^nu-u_{r_*}\|_{L_1(X,\lambda)}=0$.

\end{proof}
 
\begin{remark}
  We just proved a bit more than what is claimed in
  Proposition~\ref{prop:summary2}: another look at equation
  \eqref{eq:N+k+n-estimate} reveals that, in the bistable regime, the stable
  fixed point $u_{r_*}$ of $\widetilde P$ is even Lyapunov-stable (and the
  same is true for $u_{-r_*}$). Indeed, fix
  $\overline\varepsilon>0$, $n_1\in\N$ and $\overline\varrho>0$ as in the preceding paragraph. That
  choice was completely independent of the particular initial densities
  investigated there, and the same is true of the choice of the constants $K$,
  $\Delta_n$ and $\varepsilon_n$ occuring in estimate
  \eqref{eq:N+k+n-estimate}.  Now let $\delta>0$. Choose $n_2\in\N$ such that
  $\varepsilon_{n_2}<\frac\delta{2K}$ and then
  $\eta:=\min\{\overline\varrho,\frac\delta{2(1+\Delta_{n_2})}\}$. Then equation
  \eqref{eq:N+k+n-estimate}, applied with $N=0$, shows that for each $u\in
  L_1(X,\lambda)$ with $\eta_0=\|u-u_{r_*}\|_{L_1(X,\lambda)}<\eta$ and for
  each $n\geq0$ holds
  \begin{displaymath}
    \|\widetilde P^{n_1+n_2+n}u-u_{r_*}\|_{L_1(X,\lambda)}
    \leq
    \eta_0(1+\Delta_{n_2})+K\,\varepsilon_{n_2}
    <
    \delta\ .
  \end{displaymath}

\end{remark}

\subsection{The stable manifold of $u_0$ {in the bistable regime}}
\label{subsec:stablemfd}
Let $W^s(u_0):=\{u\in\Dc:\widetilde{P}^n u\to u_0\}$ denote the \emph{stable manifold} of
$u_0$ in the space of \emph{all} probability densities on $X$. Clearly, all
symmetric densities $u$ (i.e. those satisfying $u(-x)=u(x)$) belong to $W^s(u_0)$,
because symmetric densities have field $\phi(u)=0$ so that also the parameter
$G(\phi(u))=0$, and symmetry is preserved under the operator $P_0$.\\


However, $W^s(u_0)$ is not a big set. In the present section we prove

\begin{prop}[{\textbf{The basins of $u_{\pm r_*}$ touch $W^s(u_0)\cap\Dc'$}}]
  \label{prop:basin-boundary}
  Each density in $W^s(u_0)\cap\Dc'$ belongs to the boundaries of the basins
  of $u_{r_*}$ and of $u_{-r_*}$.
\end{prop}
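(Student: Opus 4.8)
The plan is to show that any $u\in W^s(u_0)\cap\Dc'$ can be $L_1$-approximated by densities from the basin of $u_{r_*}$ (and, symmetrically, by densities from the basin of $u_{-r_*}$), which together with the openness of these basins established in Proposition~\ref{prop:summary2} will give the claim. Write $u=\int_Y w_\bullet\,d\mu$ with $\mu\in\mathsf{P}(Y)$, and let $r_n$, $[a_n,b_n]$ be the associated parameter sequence and support intervals from Lemma~\ref{lemma:crucial}. Since $\widetilde P^n u\to u_0$, scenario~(2) of Proposition~\ref{prop:summary} applies, so $0\in\conv(\supp(\tilde\Lc^{*n}\mu))$ for all $n$, i.e. $a_n\leq 0\leq b_n$, and moreover $|b_n-a_n|\to 0$.

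First I would perturb $\mu$ by pushing a little mass to the right: for small $t>0$ set $\mu_t:=(1-t)\mu+t\,\delta_{2/3}$, so $\mu_t\to\mu$ weakly (hence, by \eqref{eq:norm_W_estimate}, the corresponding densities $u_t:=\int_Y w_\bullet\,d\mu_t\to u$ in $L_1$), and $\mu_t\succeq\mu$ with strict inequality. The point is to show $u_t$ lies in the basin of $u_{r_*}$ for every $t>0$. By monotonicity (Lemma~\ref{lemma:pre-monotonicity}\ref{item:pre-monotonicity-c}) we have $\tilde\Lc^{*n}\mu_t\succeq\tilde\Lc^{*n}\mu$ for all $n$; since $\supp(\tilde\Lc^{*n}\mu)\subseteq[a_n,b_n]$ with $a_n\to 0$, the measure $\tilde\Lc^{*n}\mu$ puts, for large $n$, essentially all its mass arbitrarily close to $0$ from below, so $\tilde\Lc^{*n}\mu_t$ keeps a definite amount of mass $\gtrsim t$ (coming from the $\delta_{2/3}$-branch images) bounded away from $a_n$ on the right. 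The key quantitative input is Lemma~\ref{lemma:gap-estimate}: applied with $u=\id$, $\alpha=1$ and $\beta$ the common lower bound $2/(4-r^2)$ for $\sigma_r',\tau_r'$ on the relevant range, it gives
\[
\int_Y\id\,d(\tilde\Lc^*\nu)-\int_Y\id\,d(\tilde\Lc^*\mu)\geq \beta\left(\int_Y\id\,d\nu-\int_Y\id\,d\mu\right)
\]
whenever $\mu\preceq\nu$ and $r_\mu\leq r_\nu$; but since $\beta$ at the relevant scale (near the common zero, where $a_n,b_n$ live once they are close to $0$) is $\geq \tfrac12$ and the feedback adds a further increment via $\tilde\sigma'(0)>1$ exactly as in \eqref{eq:sigma-tilde}–\eqref{eq:a-lower}, the ``mean gap'' $\int\id\,d(\tilde\Lc^{*n}\mu_t)-\int\id\,d(\tilde\Lc^{*n}\mu)$ cannot shrink to $0$; it stays bounded below by a constant $c(t)>0$. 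Consequently $\int_Y\id\,d(\tilde\Lc^{*n}\mu_t)$ is bounded away from $a_n$, i.e. for $n$ large $\tilde\Lc^{*n}\mu_t$ has its barycenter at a positive distance from $0$, forcing $\tilde\Lc^{*n}\mu_t\succ\delta_{y}$ for some fixed $y>0$; then Corollary~\ref{coro:monotone-convergence}\ref{item:monotone-convergence-b} (or directly Lemma~\ref{lemma:controlled-convergence}\ref{item:controlled-convergence-b}) yields $\tilde\Lc^{*m}\mu_t\to\mu_{r_*}$, i.e. $u_t$ is in the basin of $u_{r_*}$.

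Letting $t\downarrow 0$ shows $u$ is a limit of densities in the basin of $u_{r_*}$; the symmetric perturbation $\mu_t':=(1-t)\mu+t\,\delta_{-2/3}$ shows $u$ is also a limit of densities in the basin of $u_{-r_*}$. Since $u\in W^s(u_0)$ is itself not in either basin, and both basins are $L_1$-open by Proposition~\ref{prop:summary2}, $u$ lies on the common boundary of the two basins, which is the assertion. I expect the main obstacle to be making rigorous the claim that the ``mean gap'' $c(t)$ does not degenerate: one must combine the contraction-type estimate $|b_n-a_n|\to 0$ for the unperturbed orbit with the expansion estimate of Lemma~\ref{lemma:gap-estimate} near the common zero $z_{r}$, being careful that the relevant expansion constant $\beta$ there really exceeds the global contraction rate $3/4$ of \eqref{eq:less3/4} only when one also exploits the feedback-induced push $\tilde\sigma'(0)>1$; this is precisely the mechanism already isolated in the ``second case'' of the proof of Proposition~\ref{prop:summary2}, so the argument should be a localisation of that one rather than anything genuinely new.
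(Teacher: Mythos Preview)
Your perturbation $\mu_t=(1-t)\mu+t\,\delta_{2/3}$ is exactly the one the paper uses, and the overall strategy---show $u_t$ lies in the basin of $u_{r_*}$ for every $t>0$, then let $t\downarrow0$---is correct. The problem is your justification that $u_t$ is attracted to $u_{r_*}$.

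You claim that Lemma~\ref{lemma:gap-estimate} forces the ``mean gap'' $\int_Y\id\,d(\tilde\Lc^{*n}\mu_t)-\int_Y\id\,d(\tilde\Lc^{*n}\mu)$ to stay bounded below by a constant $c(t)>0$. This is not what that lemma says: each application multiplies the gap by a factor $\beta<1$ (on $Y$ one has $\sigma_r',\tau_r'\geq 18/49$, so the best you get is a lower bound decaying like $3^{-n}$). The feedback term $\tilde\sigma'(0)>1$ you invoke governs the self-consistent evolution of a \emph{single} orbit's left support endpoint; it does not enter the gap estimate between two orbits, which is a statement about $\Lc_r^*$ for \emph{fixed} $r$. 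So the mean gap does shrink, and you have not ruled out that it shrinks to zero---which is exactly what would happen if $u_t$ also lay in $W^s(u_0)$. Separately, even if you had a uniform lower bound on the barycenter, that would not give $\tilde\Lc^{*n}\mu_t\succ\delta_y$: a positive mean says nothing about the left edge of the support.

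The missing ingredient is a \emph{rate} statement for orbits that stay in $W^s(u_0)$: the paper proves (Lemma~\ref{lemma:roland}) that for any $v\in W^s(u_0)\cap\Dc'$ the field obeys the quadratic bound $|\phi_n(v)|\leq C_v\,R_n(v)^2$, and since $R_n$ contracts like $(5/9)^n$ near zero this forces $|\phi_n(v)|=O((25/81)^n)$. If both $u$ and $u_t$ were in $W^s(u_0)$, the difference of their fields would then decay like $(25/81)^n$; but Lemma~\ref{lemma:gap-estimate} shows it can decay no faster than $3^{-n}$ times a positive constant, a contradiction since $25/81<1/3$. This is the content of Lemma~\ref{lemma:stable-alternative}, which then feeds directly into the three-line proof of the proposition: $\mu\prec\mu_t$, so $u_t\notin W^s(u_0)$, so by monotonicity and Proposition~\ref{prop:summary} it must go to $u_{r_*}$. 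Your sketch correctly anticipates that the ``main obstacle'' is the non-degeneracy of the gap, but the resolution is a rate comparison rather than a uniform lower bound, and it hinges on the quadratic field estimate you have not established.
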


We start by providing more information on the
fields $\phi(\widetilde{P}^n u)$ of orbits in $W^s(u_0)\cap\Dc'$.
Recall that for $u=\int_Y w_{\bullet}\,d\mu\in\Dc'$ we have $\widetilde{P}^n u=\int_Y
w_{\bullet}\,d(\tilde\Lc^{*n}\mu)$ $(n\geq0)$. Given such a density, we denote by $R_n(u)$
the ``radius'' of the support of $\tilde\Lc^{*n}\mu$,
i.e. $R_n(u):=\inf\{\varepsilon>0:\supp(\tilde\Lc^{*n}\mu)\subseteq[-\varepsilon,\varepsilon]\}$,
and let $\phi_n(u):=\phi(\widetilde{P}^n u)=\int_Y\overline{w}\,d(\tilde\Lc^{*n}\mu)$.

\begin{lemma}[{\textbf{Field versus support radius}}]
  \label{lemma:roland}
  In the bistable regime,
  for each $u\in W^s(u_0)\cap\Dc'$ there exists a constant $C_u>0$ such that
  \begin{equation}
  |\phi_n(u)|\leq C_u\cdot(R_n(u))^2\quad \text{for }\ n\geq0.
  \end{equation}
\end{lemma}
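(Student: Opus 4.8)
The plan is to reduce the estimate to a statement about the barycenter $m_n:=\int_Y y\,d(\tilde\Lc^{*n}\mu)$ of the representing measure $\mu_n:=\tilde\Lc^{*n}\mu$ of $\widetilde{P}^n u$, abbreviating $R_n:=R_n(u)$. Since $\overline w$ is odd with $\overline w(y)=\tfrac{y}{6}+O(y^3)$ on $Y$ by \eqref{eq:field_w_y}, and $\supp(\mu_n)\subseteq[-R_n,R_n]$, one has $\bigl|\phi_n(u)-\tfrac16 m_n\bigr|\leq\tilde C\,R_n^3$ for a universal $\tilde C$; as $R_n\leq\tfrac43$ it therefore suffices to prove $|m_n|\leq C\,R_n^2$ for a suitable $C$. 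First I would invoke Proposition~\ref{prop:summary}: since $u\in W^s(u_0)$, neither scenario (1) nor (3) can occur, so we are in scenario (2), whence $0\in\conv(\supp(\mu_n))\subseteq[a_n,b_n]$ for all $n$ (discarding the trivial case $\mu_n=\delta_0$ for some $n$), $R_n\to0$, and --- because $a_{n+1}=\sigma_{r_{n+1}}(a_n)\leq0\leq\tau_{r_{n+1}}(b_n)=b_{n+1}$ with $\sigma_{r_{n+1}},\tau_{r_{n+1}}$ strictly increasing and vanishing at $z_{r_{n+1}}=-r_{n+1}$ --- that $z_{r_{n+1}}\in[a_n,b_n]$ for every $n$, i.e. $(\clubsuit_0)$ holds with $\bar n(0)=0$. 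As the two branches share the zero $z_{r_{n+1}}$ and both have slope $\leq\tfrac34$ on $Y$ by \eqref{eq:less3/4}, the widths $w_n:=b_n-a_n$ satisfy $w_{n+1}\leq\tfrac34 w_n$, so that $R_{n+k}\leq w_{n+k}\leq 2\,(\tfrac34)^k R_n$ for all $n,k\geq0$ (using $R_m\leq w_m\leq2R_m$ since $0\in[a_m,b_m]$).

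Next I would set up the recursion for $m_n$. With $\Phi_s(y):=\sigma_s(y)p_s(y)+\tau_s(y)(1-p_s(y))$ we have $m_{n+1}=\int_Y\Phi_{r_{n+1}}\,d\mu_n$, and from \eqref{eq:z_r} and \eqref{eq:p_r} one gets $\Phi_s(z_s)=0$ (both branches vanish at $z_s$), $p_s(z_s)=\tfrac12$, and $\Phi_s'(z_s)=\tfrac{2}{4-s^2}$; since $\Phi_s$ is smooth in $y$ on $Y$ uniformly for $s\in R$, Taylor's theorem gives $\Phi_s(z_s+t)=\tfrac{2}{4-s^2}\,t+\theta_s(t)$ with $|\theta_s(t)|\leq C_\Phi t^2$ uniformly, for $z_s+t\in Y$ and $s\in R$. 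Writing each $y\in\supp(\mu_n)$ as $z_{r_{n+1}}+t$ with $|t|\leq w_n\leq2R_n$, and using $\int_Y(y-z_{r_{n+1}})\,d\mu_n=m_n+r_{n+1}$, this yields
\begin{displaymath}
  m_{n+1}=\frac{2}{4-r_{n+1}^2}\,(m_n+r_{n+1})+\rho_n,\qquad |\rho_n|\leq 4C_\Phi\,R_n^2 .
\end{displaymath}
Since $G(0)=0$ and $G\in\mathcal C^2$, and $|\phi_n(u)|\leq\tfrac16|m_n|+\tilde C R_n^3\leq C_1 R_n$, we also have $r_{n+1}=G(\phi_n(u))=G'(0)\phi_n(u)+O(\phi_n(u)^2)=\tfrac{G'(0)}{6}m_n+O(R_n^2)$, and hence
\begin{displaymath}
  m_{n+1}=\lambda_n m_n+\epsilon_n,\qquad \lambda_n:=\frac{2}{4-r_{n+1}^2}\Bigl(1+\frac{G'(0)}{6}\Bigr),\qquad |\epsilon_n|\leq C_2 R_n^2 ,
\end{displaymath}
with $C_1,C_2,C_\Phi,\tilde C$ universal. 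The key structural point: in the bistable regime $G'(0)>6$, while $\tfrac{2}{4-s^2}\geq\tfrac12$ for $s\in R$, so $\lambda_n\geq\tfrac12\bigl(1+\tfrac{G'(0)}{6}\bigr)>1$ for every $n$ (and $\lambda_n$ is bounded above, $G'(0)$ being fixed).

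Finally I would exploit that we are \emph{on} the stable manifold, so $R_n\to0$ and thus $m_n\to0$. Because $\lambda_j>1$ for all $j$, the recursion can be solved backwards: for every $p\geq1$,
\begin{displaymath}
  m_n=\frac{m_{n+p}}{\lambda_n\cdots\lambda_{n+p-1}}-\sum_{k=0}^{p-1}\frac{\epsilon_{n+k}}{\lambda_n\cdots\lambda_{n+k}} ;
\end{displaymath}
letting $p\to\infty$ --- the first term tends to $0$ since $m_{n+p}\to0$ and each product is $\geq1$ --- gives $m_n=-\sum_{k\geq0}\epsilon_{n+k}/(\lambda_n\cdots\lambda_{n+k})$, whence
\begin{displaymath}
  |m_n|\leq\sum_{k\geq0}|\epsilon_{n+k}|\leq C_2\sum_{k\geq0}R_{n+k}^2\leq 4C_2\sum_{k\geq0}\Bigl(\tfrac{9}{16}\Bigr)^k R_n^2=\tfrac{64C_2}{7}\,R_n^2 .
\end{displaymath}
Combined with the first-paragraph estimate this gives $|\phi_n(u)|\leq C_u R_n^2$ with a constant $C_u$ that can in fact be taken independent of $u$. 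The main obstacle is precisely this last step: the barycenter evolves under an \emph{expanding} linear recursion ($\lambda_n>1$, reflecting the instability of $u_0$), so forward iteration yields nothing; the quadratic bound is forced only by running the recursion backwards along an orbit known to converge to $u_0$ --- which is exactly where the hypothesis $u\in W^s(u_0)$ enters. The other point needing care is the uniformity (in $k$, and in $s\in R$) of the Taylor remainders $\theta_s$ and of the remainder in the expansion of $G$, which is what keeps all the constants above universal.
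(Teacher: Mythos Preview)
Your proof is correct and rests on the same core insight as the paper's: the field (or barycenter) evolves under an \emph{expanding} affine recursion with $O(R_n^2)$ inhomogeneity, so that convergence to zero forces the quadratic bound. The executions differ, however. The paper works directly with $\phi_n$ and, via a page of explicit integral estimates, proves $|\phi(\widetilde P v)|\geq\bar B\,|\phi(v)|-\varepsilon^2$ whenever $\supp(\nu)\subseteq[-\varepsilon,\varepsilon]$, with $\bar B>1$ in the bistable regime; it then argues by contradiction that a violation $|\phi_n|>\tfrac{2}{\bar B-1}R_n^2$ would propagate forward and make $|\phi_n|$ increase. You instead reduce to the barycenter $m_n$, obtain the recursion $m_{n+1}=\lambda_n m_n+\epsilon_n$ cleanly from a second-order Taylor expansion of $\Phi_s$ at its zero $z_s$, and then \emph{solve the recursion backwards} using $m_{n+p}\to0$. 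Your route is more streamlined and, as you note, yields a universal constant rather than a $u$-dependent one.

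One small point to tighten: for the inequality $w_m\leq 2R_m$ you need $[a_m,b_m]$ to be exactly $\conv(\supp(\mu_m))$, not the larger interval from Lemma~\ref{lemma:crucial} that starts at $[a_0,b_0]=Y$. This is harmless --- setting $a'_m:=\inf\supp(\mu_m)$, $b'_m:=\sup\supp(\mu_m)$, one has $a'_{m+1}=\sigma_{r_{m+1}}(a'_m)$ and $b'_{m+1}=\tau_{r_{m+1}}(b'_m)$ (since $p_r\in(0,1)$ on $Y$), and your argument for $(\clubsuit_0)$ and the contraction $w'_{m+1}\leq\tfrac34 w'_m$ goes through verbatim for these endpoints --- but as written the two notions are conflated.
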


\begin{proof}
  In view of the explicit formula \eqref{eq:field_w_y}, we have
  $\overline{w}'(0)=\frac16$ and $\overline{w}''(0)=0$, and therefore see that
  there is some $\overline\varepsilon\in(0,\frac13)$ such that for every
  $\varepsilon\in(0,\overline\varepsilon)$ and all
  $y\in[-2\varepsilon,2\varepsilon]$,
  \begin{equation}
    \label{Eq_estimate_F_near_zero}%
    \begin{split}
      \frac{\left\vert y\right\vert }{6}\leq\left\vert \overline{w}(y)\right\vert
      \leq\frac{\left\vert y\right\vert }{6-6\varepsilon^2}\quad\text{ and }\quad\left\vert
        G(y)\right\vert >(B-c\varepsilon)|y|\text{,}
    \end{split}
  \end{equation}
  where $B:=G'(0)>6$ and $c$, too, is a positive constant which only depends
  on the function $G$. In addition, elementary calculations based on
  \eqref{eq:sigma-tau-def} and \eqref{eq:sigma_prime} show that letting
  $\kappa:=\max(1,\frac{B+2}{6})$, $\overline\varepsilon$ can be chosen such
  that, for every $\varepsilon\in(0,\overline\varepsilon)$ and
  $r\in[0,B\varepsilon)$, also
  \begin{equation}
    \begin{split}
      &\hspace*{-5mm}
      |\sigma_r'(y)-\frac12|\leq B\varepsilon\ ,\quad|\tau_r'(y)-\frac12|\leq
      B\varepsilon\quad\text{for }|y|\leq\varepsilon,
      \\
      B
      \varepsilon\geq
      \tau_{r}(y)  & \geq \sigma_{r}(y)\geq\left(  \frac{1}{2}-\kappa\varepsilon\right)
      (y+r)\geq0\text{ \quad for }y\in\lbrack-r,\varepsilon]\text{, and}\\
      0>\tau_{r}(y)  & \geq\sigma_{r}(y)\geq\left(  \frac{1}{2}+\varepsilon\right)
      (y+r)>-
      B
      \varepsilon\text{ \quad for }y\in[-\varepsilon,-r)\text{.}%
    \end{split}
    \label{Eq_estimate_tau_sigma_near_zero}%
  \end{equation}
  (Recall that $\sigma_{r}$ and $\tau_{r}$ share a zero at $z_r=-r$.)
  Finally, note that we can w.l.o.g. take $\overline\varepsilon$ so small that
  $\bar B:=\left(\frac12-\overline\varepsilon\right)\left(1+\frac
    B6-(\frac13+\frac{c}{6})\overline\varepsilon\right)\in(1,3]$.
  (Due to Assumption I we have $B\leq 25$.)\\

  Consider some $v=\int_Y w_{\bullet}\,d\nu$ with $\nu \in \mathsf{P}(Y)$.  We
  claim that for $\varepsilon\in(0,\frac{\overline\varepsilon}{\kappa})$
  \begin{equation}
    \label{eq:quadratic-estimate}
    |\phi(\widetilde{P}v)|
    \geq
    \bar B\cdot|\phi(v)|-\varepsilon^2\quad\text{if
    }\supp(\nu)\subseteq[-\varepsilon,\varepsilon].
  \end{equation}
  Denote $r:=G(\phi(v))$ which by S-shapedness of $G$ satisfies $|r| <
  B\varepsilon$.  In view of our system's symmetry, we may assume
  w.l.o.g. that $r\geq0$.  According to (\ref{Eq_mean_for_convex_combi}) and
  (\ref{Eq_P_tilda_acting_on_representing_measures}) we have
  \[
  \phi(\widetilde{P}v)
  =
  \phi\left(  \int_{{Y}} w_{\bullet}\,d (  \widetilde{\mathcal{L}}^{\ast}\nu )
  \right)  =\int_{{Y}}(\overline{w}\circ\sigma_{r})\cdot p_{r}\,d\nu+\int_{{Y}}(\overline
  {w}\circ\tau_{r})\cdot(1-p_{r})\,d\nu
  \]
  so that, due to (\ref{Eq_estimate_F_near_zero}) and
  (\ref{Eq_estimate_tau_sigma_near_zero}),
  \begin{align*}
    \int_{{Y}}&(\overline{w}\circ\sigma_{r})\cdot p_{r}\,d\nu\\
    & \geq
    \int_{{Y}}\left( \frac{1_{[-B\varepsilon,0)}\circ\sigma_{r}(y)}{6-6\varepsilon^2}
      +\frac{1_{[0,B\varepsilon]}\circ\sigma_{r}(y)}{6}\right)
    \cdot \sigma_r(y)\,p_{r}(y)\,d\nu(y)\\
    & \geq
    \int_{{Y}}\left( \frac{1_{[-\frac23,-r)}(y)}{6-6\varepsilon^2}\left( \frac{1}{2}
        +\varepsilon\right)+\frac{1_{[-r,\frac23]}(y)}{6}\left( \frac{1}{2}%
        -\kappa\varepsilon\right)\right)\cdot (y+r) \,p_{r}(y)\,d\nu(y)\text{.}%
  \end{align*}
  Combining this with the parallel estimate for $\int_Y(\overline{w}\circ\tau
  _{r})\cdot(1-p_{r})\,d\nu$, we get
  \begin{displaymath}
    \phi(\widetilde{P}v)
    \geq\int_{[-\frac{2}{3},-r)}\frac{\left(  \frac{1}{2}%
        +\varepsilon\right)  (y+r)}{6-6\varepsilon^2}\,d\nu(y)
    +\int_{[-r,\frac{2}{3}]}\frac{\left(
        \frac{1}{2}-\kappa\varepsilon\right)  (y+r)}{6}\,d\nu(y).
  \end{displaymath}
  Continuing, we find that
  \begin{align*}
    \phi(\widetilde{P}v)
    & \geq\int_{[-\frac{2}{3},-r)}\frac{\frac{1}{2}+\varepsilon}{6-6\varepsilon^2}\cdot
    y\,d\nu(y)+\int_{[-r,\frac{2}{3}]}\frac{\frac{1}{2}-\kappa\varepsilon}{6}\cdot
    y\,d\nu(y)+\frac{\frac{1}{2}-\kappa\varepsilon}{6}\,\cdot r\\
    & \geq\int_{[-\frac{2}{3},0)}\frac{\frac{1}{2}+\varepsilon}{6-6\varepsilon^2}\cdot
    y\,d\nu(y)+\int_{[0,\frac{2}{3}]}\frac{\frac{1}{2}-\kappa\varepsilon}{6}\cdot
    y\,d\nu(y)+\frac{\frac{1}{2}-\kappa\varepsilon}{6}\,\cdot r\\
    & \geq\int_{[-\frac{2}{3},0)}K\cdot\overline{w}(y)\,d\nu(y)
    +\int_{[0,\frac{2}{3}]}K^*\cdot\overline{w}(y)\,d\nu(y)
    +\frac{\frac{1}{2} -\kappa\varepsilon}{6}\,\cdot r\ ,
  \end{align*}
  where
  $K:=(\frac{1}{2}+\varepsilon)/(1-\varepsilon^2)>K^{\ast}:=(\frac{1}{2}-\kappa\varepsilon)(1-\varepsilon^2)$.
  As, because of (\ref{Eq_estimate_F_near_zero}),
  $\phi(v)=\int_Y\overline{w}\,d\nu\leq\frac\varepsilon5$, so that $r=G\left(
    \phi\left( v\right) \right) \geq(B-c\varepsilon) \cdot\phi\left(
    v\right)$, we conclude
  \begin{displaymath}
    \begin{split}
      \phi(\widetilde{P}v)
      &\geq
      \phi(v)\left(K^{\ast}+\frac{(\frac12-\kappa\varepsilon)(B-c\varepsilon)}6\right)
      +(K-K^{\ast})\int_{[-\frac{2}{3},0)}\overline{w}(y)\,d\nu(y)\\
      &\geq
      \phi(v)\left(\frac12-\overline\varepsilon\right)\left(1+\frac
        B6-\left(\frac13+\frac{c}{6}\right)\overline\varepsilon\right)
      -\varepsilon^2
      =
      \bar B\cdot\phi(v)-\varepsilon^2,
    \end{split}
  \end{displaymath}
  since $K-K^{\ast}\leq3\varepsilon$ and
  $|\overline{w}(y)|\leq\frac\varepsilon3$ whenever
  $|y|\leq\varepsilon\leq\frac13$.
  This proves \eqref{eq:quadratic-estimate}.\\

  Now take any $u\in W^s(u_0)\cap\Dc'$.  Then $\phi_n(u)\to0$, and the second
  alternative of Proposition~\ref{prop:summary} applies, so that
  $R_n(u)\leq\overline\varepsilon / \kappa$ and $(1+2BR_n(u))^2\leq\frac{\bar
    B+1}2$ for all $n$ larger than some $n_{\overline\varepsilon}$.  In
  particular,
  \begin{equation}
    \label{eq:Rnplus1}
    R_{n+1}(u)^2\leq(1+2BR_n(u))^2R_n(u)^2\leq\frac{\bar B+1}2R_n(u)^2
  \end{equation}
  for these $n$ in view of \eqref{Eq_estimate_tau_sigma_near_zero}.  Applying,
  for $n\geq n_{\overline\varepsilon}$, the estimate
  \eqref{eq:quadratic-estimate} to $v:=\widetilde{P}^n u$ and
  $\varepsilon:=R_n(u)$, we obtain
  \begin{displaymath}
    |\phi_{n+1}(u)|\geq\bar B\cdot|\phi_n(u)|-(R_n(u))^2 \quad
    \text{for } n\geq n_{\overline\varepsilon}.
  \end{displaymath}
  Suppose for a contradiction that $(R_n(u))^2<\frac{\bar B-1}2\,|\phi_n(u)|$
  for some $n>n_{\overline\varepsilon}$. Then $|\phi_{n+1}(u)|>\frac{\bar
    B+1}2|\phi_n(u)|$, and therefore $(R_{n+1}(u))^2\leq\frac{\bar
    B+1}2(R_n(u))^2<\frac{\bar B-1}2\frac{\bar B+1}2|\phi_n(u)|<\frac{\bar
    B-1}2|\phi_{n+1}(u)|$.  We can thus continue inductively to see that
  $|\phi_n(u)|<|\phi_{n+1}(u)|<|\phi_{n+2}(u)|<\dots$ which contradicts
  $\phi_n(u)\to0$. Therefore $|\phi_n(u)|\leq\frac2{\bar B-1}\,(R_n(u))^2$ for
  all $n>n_{\overline\varepsilon}$, and the assertion of our lemma follows.
\end{proof}

\begin{lemma}[{\textbf{$W^s(u_0)$ is a thin set for the order $\prec$}}]
  \label{lemma:stable-alternative}
  In the bistable regime, if $u=\int_Y w_{\bullet}\,d\mu$ and $v=\int_Y w_{\bullet}\,d\nu$ are densities in $\Dc'$ with $\mu\prec\nu$,
  then at most one of $u$ and $v$ can belong to $W^s(u_0)$.
\end{lemma}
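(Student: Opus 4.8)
The plan is to argue by contradiction: suppose both $u$ and $v$ lie in $W^s(u_0)$. Write $\mu_n:=\tilde\Lc^{*n}\mu$, $\nu_n:=\tilde\Lc^{*n}\nu$, so $\widetilde P^nu=\int_Y w_\bullet\,d\mu_n$ and $\widetilde P^nv=\int_Y w_\bullet\,d\nu_n$. By Lemma~\ref{lemma:pre-monotonicity}\ref{item:pre-monotonicity-c}) the strict order is preserved: $\mu_n\prec\nu_n$ for all $n$. Since $\widetilde P^nu\to u_0$ and $\widetilde P^nv\to u_0$, scenario~(2) of Proposition~\ref{prop:summary} applies to both orbits, so $0\in\conv(\supp\mu_n)\cap\conv(\supp\nu_n)$ and the lengths of these intervals tend to $0$; set $R_n:=\max\{R_n(u),R_n(v)\}$ in the notation of \S\ref{subsec:stablemfd}, so that $\supp\mu_n\cup\supp\nu_n\subseteq[-R_n,R_n]$ with $R_n\to0$. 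The quantity I would monitor is $G_n:=\int_Y\id\,d\nu_n-\int_Y\id\,d\mu_n$. For $\mu_n\preceq\nu_n$ one has $\d(\mu_n,\nu_n)=\int(F_{\mu_n}-F_{\nu_n})\,dy$ by \eqref{eq:Wasserstein}, which equals the difference of the barycenters; thus $G_n=\d(\mu_n,\nu_n)$. It is strictly positive by \eqref{eq:mono-fact-2a}, and it tends to $0$ since $\mu_n,\nu_n\to\delta_0$ weakly.

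The heart of the argument is the expansion estimate
\[
G_{n+1}\ \geq\ \Bigl(\frac{G'(0)+6}{12}-C\,R_n\Bigr)\,G_n
\qquad\text{for all large }n,
\]
with $C$ a fixed constant depending only on $G$. To obtain it, set $r_n^\mu:=G(\phi(\widetilde P^nu))$, $r_n^\nu:=G(\phi(\widetilde P^nv))$ --- so $r_n^\mu\leq r_n^\nu$, since $\phi$ respects $\prec$ by \eqref{eq:mono-fact-1} and $G$ is increasing --- put $\Psi_r:=\sigma_r\cdot p_r+\tau_r\cdot(1-p_r)$, and split, via \eqref{eq:explicit-1} with $\id$ in place of $u$,
\[
G_{n+1}=\int_Y\bigl(\Psi_{r_n^\nu}-\Psi_{r_n^\mu}\bigr)\,d\nu_n+\int_Y\Psi_{r_n^\mu}\,d(\nu_n-\mu_n).
\]
For the second integral: $\Psi_{r_n^\mu}$ is increasing with derivative $\geq\frac12-CR_n$ on $[-R_n,R_n]$ --- here one uses $|r_n^\mu|=\Oc(R_n)$, which holds because $|\phi(\widetilde P^nu)|=\Oc(R_n)$ by \eqref{eq:field_w_y}, together with $\sigma_r'(0)=\tau_r'(0)=\tfrac{2}{4-r^2}\to\tfrac12$ --- so by Lemma~\ref{lemma:gap-estimate} (or a direct integration by parts) it is $\geq(\frac12-CR_n)\,G_n$. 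For the first integral: the curvature and tangency estimates for $\sigma_r,\tau_r,p_r$ near $0$ give $\partial_t\Psi_t(y)\geq\frac12-CR_n$ for $y,t$ near $0$, so it is $\geq(r_n^\nu-r_n^\mu)(\frac12-CR_n)$, and $r_n^\nu-r_n^\mu\geq(G'(0)-CR_n)\bigl(\phi(\widetilde P^nv)-\phi(\widetilde P^nu)\bigr)\geq(G'(0)-CR_n)\cdot\tfrac16\,G_n$, where the last step uses that $y\mapsto\overline w(y)-\tfrac y6$ is non-decreasing (by \eqref{eq:field_w_y}) and \eqref{eq:mono-fact-1}. Adding the two lower bounds and absorbing all nonlinear errors into a single term $C\,R_nG_n$ yields the estimate.

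Finally, in the bistable regime $G'(0)>6$, so $\Lambda:=\tfrac{G'(0)+6}{12}>1$; as $R_n\to0$ there is $n_1$ with $C\,R_n\leq\tfrac{\Lambda-1}{2}$ for $n\geq n_1$, hence $G_{n+1}\geq\tfrac{\Lambda+1}{2}\,G_n$ for $n\geq n_1$ and therefore $G_n\geq\bigl(\tfrac{\Lambda+1}{2}\bigr)^{n-n_1}G_{n_1}\to\infty$, contradicting $G_n\to0$. (The borderline situations where some $\mu_n$ or $\nu_n$ already equals $\delta_0$ are harmless: the estimate above still applies, or one simply notes via Corollary~\ref{coro:monotone-convergence} that then one of $u,v$ is attracted to $u_{\pm r_*}\neq u_0$.) I expect the main obstacle to be the expansion estimate itself: one must ensure that the nonlinearity contributes only \emph{multiples of $G_n$} with coefficient tending to $0$, rather than an additive forcing term that could mask the geometric growth. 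This is exactly the point where the smallness of the support radii $R_n$ and of the parameters $r_n^\mu,r_n^\nu$ --- available precisely because the dichotomy forces any orbit in $W^s(u_0)$ into scenario~(2) of Proposition~\ref{prop:summary} --- is used essentially.
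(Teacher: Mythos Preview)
Your argument is correct, and it follows a genuinely different route from the paper's own proof. Both proceed by contradiction, assuming $u,v\in W^s(u_0)$, but the mechanisms diverge. The paper does \emph{not} track the barycenter gap $G_n$. Instead it compares the self-consistent orbit $\tilde\Lc^{*n}\nu$ with the ``$\mu$-driven'' orbit $\Lc_\mu^{*(n)}\nu:=\Lc^*_{r_{n,\mu}}\circ\dots\circ\Lc^*_{r_{1,\mu}}\nu$, and applies Lemma~\ref{lemma:gap-estimate} iteratively (with $\alpha=\tfrac16$, $\beta=\tfrac13$) to obtain the lower bound $r_{n,\nu}-r_{n,\mu}\geq \tfrac{\inf_X G'}{6\cdot 3^n}\int_Y\id\,d(\nu-\mu)$, a fixed positive constant times $3^{-n}$. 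For the contradiction it then invokes Lemma~\ref{lemma:roland} (the quadratic field-versus-radius estimate $|\phi_n|\leq C\,R_n^2$), combined with the observation that $R_n$ decays like $(\tfrac59)^n$ once the parameters are near zero, to force $r_{n,\nu}-r_{n,\mu}=\Oc\!\bigl((\tfrac{25}{81})^n\bigr)$, which decays faster than $3^{-n}$.

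Your route bypasses Lemma~\ref{lemma:roland} entirely: you show directly that $G_n$ expands by the factor $\tfrac12+\tfrac{G'(0)}{12}>1$ (precisely the unstable eigenvalue of $D\widetilde P|_{u_0}$ identified in Proposition~\ref{prop:hyperbolic-fixed}), up to an $\Oc(R_n)$ correction. This is more elementary and makes the hyperbolic instability of $u_0$ visible at the level of the IFS. The paper's approach, by contrast, outsources the bistable hypothesis into the separate Lemma~\ref{lemma:roland}, and the step where $G'$ enters only needs $\inf_X G'>0$ rather than $G'(0)>6$; so the two arguments distribute the work differently, but yours is the shorter path to this particular lemma.
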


\begin{proof}
  Suppose that $u\in W^s(u_0)$. We {are going to} show that
  $\widetilde{P}^n v\to u_{r_*}$,
  i.e. $\tilde\Lc^{*n}\nu\to\mu_{r_*}$ as $n\to\infty$.

  Assume for a contradiction that also $v\in W^s(u_0)$. We denote the parameters obtained
  from $u$ by $r_{n,\mu}:= {G}(\phi(\tilde
  P^{n-1}u)) ={G}(\int_Y\overline{w}\,d(\tilde\Lc^{*(n-1)}\mu))$, and define
  $r_{n,\nu}$ analogously. Then our assumption implies that
  $\lim_{n\to\infty}r_{n,\mu}=\lim_{n\to\infty}r_{n,\nu}=0$.

  In view of \eqref{eq:field_w_y},
  $\overline{w}'\geq\frac16$, and one checks immediately that
  $\inf_{Y}\sigma_0'=\frac{18}{49}>\frac13$ so that there is
  $n_0>0$ such that
  $\inf_{Y}\sigma_{r_{n,\mu}}'\geq\frac13$ for all
  $n\geq n_0$.
  Because of the strict monotonicity of $\tilde\Lc^*$ (Lemma \ref{lemma:pre-monotonicity}) we
  have $\tilde\Lc^{*n_0}\mu\prec\tilde\Lc^{*n_0}\nu$, so that
  (replacing $\mu$ and $\nu$ by these iterates) we can assume
  w.l.o.g. that $n_0=0$.  
  Denote
  $\Lc_\mu^{*(n)}:=\Lc^*_{r_{n,\mu}}\circ\dots\circ\Lc^*_{r_{1,\mu}}$ so that
  $\tilde\Lc^{*n}\mu=\Lc_\mu^{*(n)}\mu$ and ($\mu \mapsto r_{\mu}$ being non-decreasing)
  $\tilde\Lc^{*n}\nu\succeq\Lc_\mu^{*(n)}\nu$ for $n \geq 1$. Therefore

  \begin{displaymath}
   \begin{split}
    r_{n,\nu}-r_{n,\mu}
    &\geq
    G\left(\int_Y\overline{w}\,d(\Lc^{*(n)}_\mu\nu)\right)-G\left(\int_Y\overline{w}\,d(\Lc^{*(n)}_\mu\mu)\right)\\
    &\geq
    \inf_X G' \cdot \left( \int_Y\overline{w}\,d(\Lc^{*(n)}_\mu\nu)-\int_Y\overline{w}\,d(\Lc^{*(n)}_\mu\mu) \right).
   \end{split}
  \end{displaymath}
  In view of the lower bounds for $\overline{w}'$ and $\sigma_{r_{n,\mu}}',\tau_{r_{n,\mu}}'$,
  repeated application of the estimate \eqref{eq:gap-estimate} from Lemma \ref{lemma:gap-estimate} yields
  \begin{equation}
    \label{eq:r-estimate}
    r_{n,\nu}-r_{n,\mu}
    \geq
    \frac{\inf_X G'}{6\cdot3^n}\,\int_Y \id\,d(\nu-\mu).
  \end{equation}
  Observe that the last integral is strictly positive because $\mu\prec\nu$, cf. \eqref{eq:mono-fact-2a}.

  On the other hand, due to Proposition~\ref{prop:summary} there are $\varepsilon_n\searrow0$ such that
  \begin{displaymath}
    \supp(\tilde\Lc^{*n}\mu) \cup \supp(\tilde\Lc^{*n}\nu)\subseteq[-\varepsilon_n,\varepsilon_n] ,
  \end{displaymath}
  and as $\sigma_0'(0)=\frac12<\frac59$ and $r_{n,\mu},r_{n,\nu}\to0$
  (whence also $z_{r_{n,\mu}},z_{r_{n,\nu}} \to z_{0} =0$), there exists a
  constant $C>0$ such that
  $\varepsilon_n\leq C(\frac59)^n$ for $n\geq n'$. Hence $|\phi_n(u)|,|\phi_n(v)|\leq
  \max\{C_{u},C_{v}\}\cdot C^2(\frac{25}{81})^n$ for $n\geq n'$ by
  Lemma~\ref{lemma:roland}, and as
  $r_{n,\nu}-r_{n,\mu}\leq\sup\overline{w}'\cdot(|\phi_n(u)|+|\phi_n(v)|)$,
  this contradicts the previous estimate \eqref{eq:r-estimate}.
\end{proof}

We can now conclude this section with the

\begin{proof}[Proof of Proposition \ref{prop:basin-boundary}.]
  Suppose that $u=\int w_{\bullet}\,d\mu\in W^s(u_0)$. For $t\in(0,1)$ let $u^{(t)}:=\int_Y
  w_{\bullet}\,d((1-t)\mu+t\delta_{2/3}) \in \Dc'$. Then $u^{(t)}\succ u$, hence $u^{(t)} \not\in
  W^s(u_0)$ by the previous proposition. Therefore, due to Proposition~\ref{prop:summary}
  and monotonicity of $\tilde\Lc^{*}$, for any $t$, $\widetilde{P}^n u^{(t)}$ converges to
  $u_{r_*} \succ u_0$ as $n\to\infty$.

  On the other hand,
  $\lim_{t\to0}\|u-u^{(t)}\|_{L_1(X,\lambda)}=0$, so $u$ is in the boundary of
  the basin of $u_{r_*}$. Replacing $\delta_{2/3}$ by $\delta_{-2/3}$ yields
  the corresponding result for the basin of $u_{-r_*}$.
\end{proof}

\subsection{Differentiability of $\widetilde P$ at $\mathcal{C}^2$-densities}\label{subsec:differentiability}
As $\widetilde P$ is based on a parametrised family of PFOs where the
\emph{branches} of the underlying map (and not only their weights) depend on
the parameter, it is nowhere differentiable, neither as an operator on
$L_1(X,\lambda)$ nor as an operator on the space $\BV(X)$ of (much more
regular) functions of bounded variation on $X$.  On the other hand, as the
branches of the map and their parametric dependence are analytic, one can show
that $\widetilde P$ is differentiable as an operator on the space of functions
that can be extended holomorphically to some complex neighbourhood of
$X\subseteq\C$.

Here we will focus on a more general but slightly weaker differentiability
statement.

\begin{lemma}[\textbf{Differentiability of $\widetilde P$ at $C^2$-densities}]
  \label{lemma:differentiability}
  Let $u\in \mathcal{C}^2(X)$ be a probability density w.r.t. $\lambda$ and
  let $g\in L_1(X,\lambda)$ have $\int_X g\,d\lambda=0$. Then
  \begin{equation}
    \label{eq:gateaux}
    \frac{\partial}{\partial\tau}\widetilde P(u+\tau g)|_{\tau=0}
    =
    P_r(g)+w_r(u)\cdot G'(\phi(u))\,\phi(g)
  \end{equation}
  where $r=G(\phi(u))$, $w_{r}(u):=P_r\left((u\,v_r)'\right)$, and
  $v_r(x)=\frac{4x^2-1}{4-r^2}$. If we consider $\widetilde P$ as an operator
  from $\BV(X)$ to $L_1(X,\lambda)$, then $\widetilde P$ is even
  differentiable at each probability density $u\in \mathcal{C}^2(X)\subset\BV(X)$ and
  \begin{equation}
    \label{eq:diff-BV-L1}
    D\widetilde P|_u=P_r+G'(\phi(u))\,w_r(u)\otimes\phi\ .
  \end{equation}
\end{lemma}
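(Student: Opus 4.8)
The plan is to reduce everything to two facts: each Perron--Frobenius operator $P_s$ is linear in its argument, and the one-parameter family $s\mapsto P_su$ is differentiable for a fixed $\mathcal{C}^2$ density $u$. Writing $\widetilde P(u+\tau g)=P_{s(\tau)}(u+\tau g)=P_{s(\tau)}u+\tau\,P_{s(\tau)}g$ with $s(\tau):=G(\phi(u)+\tau\phi(g))$, so that $s(0)=r$ and $s'(0)=G'(\phi(u))\,\phi(g)$, the chain and product rules give
\[
\frac{\partial}{\partial\tau}\widetilde P(u+\tau g)\Big|_{\tau=0}
=s'(0)\cdot\frac{\partial}{\partial s}\Big|_{s=r}(P_su)+P_rg
=G'(\phi(u))\,\phi(g)\,w_r(u)+P_rg ,
\]
which is \eqref{eq:gateaux}, provided we establish (a) $\frac{\partial}{\partial s}|_{s=r}(P_su)=w_r(u)$ in $L_1(X,\lambda)$, and (b) $(s,v)\mapsto P_sv$ is $L_1$-continuous, so that $\tau\mapsto P_{s(\tau)}g$ is continuous at $0$. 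Item (b) follows from $\|P_s\|=1$ by approximating $v\in L_1$ by smooth functions; item (a) is the core.

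For (a) I would use that the inverse branches $\psi_{s,1},\psi_{s,2}:X\to X$ of $T_s$ are jointly real-analytic in $(s,x)$ (fractional linear in $x$, with coefficients rational and nonsingular in $s$ for $|s|$ small), so $s\mapsto P_su$, given by $(P_su)(x)=\sum_i u(\psi_{s,i}(x))\,\psi_{s,i}'(x)$, is a $\mathcal{C}^2$ curve into $\mathcal{C}^0(X)\hookrightarrow L_1(X,\lambda)$ when $u\in\mathcal{C}^2(X)$; in particular $\frac{\partial}{\partial s}(P_su)$ exists in $L_1$, and it is enough to identify it tested against $h\in\mathcal{C}^1(X)$. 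I would differentiate in $s$ the duality relation
\[
\int_X(P_su)\,h\,d\lambda
=\int_{-1/2}^{\alpha_s}u\,(h\circ f_{M_s})\,d\lambda+\int_{\alpha_s}^{1/2}u\,(h\circ f_{N_s})\,d\lambda .
\]
The moving breakpoint $\alpha_s=-s/4$ contributes $\alpha_s'\,u(\alpha_s)\,(h(\tfrac12)-h(-\tfrac12))=-\tfrac14\,u(\alpha_s)(h(\tfrac12)-h(-\tfrac12))$, using $f_{M_s}(\alpha_s)=\tfrac12$ and $f_{N_s}(\alpha_s)=-\tfrac12$; and, inserting $\partial_sf_{M_s}=-v_s\,f_{M_s}'$ (this is precisely the identity $q(x)=1-4x^2$ from Section~\ref{sec:finite-sys-proofs}, since $v_r(x)=\tfrac{4x^2-1}{4-r^2}$), writing $(h'\circ f_{M_s})\,f_{M_s}'=(h\circ f_{M_s})'$, and integrating by parts on each branch, one obtains $\int_XP_s\big((uv_s)'\big)\,h\,d\lambda$ plus boundary terms at $\pm\tfrac12$ and at $\alpha_s$. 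Those at $\pm\tfrac12$ vanish because $v_s(\pm\tfrac12)=0$; those at $\alpha_s$ sum to $+\tfrac14\,u(\alpha_s)(h(\tfrac12)-h(-\tfrac12))$ (using $v_s(\alpha_s)=-\tfrac14$) and cancel the breakpoint contribution exactly. Hence $\frac{\partial}{\partial s}\int_X(P_su)h\,d\lambda=\int_XP_s((uv_s)')h\,d\lambda$ for all such $h$, so $\frac{\partial}{\partial s}(P_su)=P_s((uv_s)')=w_s(u)$. Keeping correct track of these boundary terms — especially the direction in which $h\circ T_s$ jumps across the moving discontinuity $\alpha_s$ of $T_s$ — is the one genuinely delicate point.

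For \eqref{eq:diff-BV-L1} I would take $g\in\BV(X)$ with $\|g\|_{\BV}$ small, set $s:=G(\phi(u+g))=r+G'(\phi(u))\,\phi(g)+O(\phi(g)^2)$ with $|\phi(g)|\leq\tfrac12\|g\|_{\BV}$, and split $\widetilde P(u+g)-\widetilde Pu=(P_su-P_ru)+P_sg$. The $\mathcal{C}^2$-regularity of $s\mapsto P_su$ yields, by Taylor's theorem in $\mathcal{C}^0$, $P_su=P_ru+(s-r)\,w_r(u)+O((s-r)^2)$, and since $(s-r)\,w_r(u)=G'(\phi(u))\,\phi(g)\,w_r(u)+O(\|g\|_{\BV}^2)$ in $L_1$, this accounts for $P_ru$, the linear term, and an $o(\|g\|_{\BV})$ error. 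For $P_sg$ I would prove $\|(P_s-P_r)g\|_{L_1}\leq C\,|s-r|\,\|g\|_{\BV}$ for $s$ near $r$: writing $(P_s-P_r)g=\sum_i\big(g\circ\psi_{s,i}\cdot\psi_{s,i}'-g\circ\psi_{r,i}\cdot\psi_{r,i}'\big)$, the Jacobians and the maps $\psi_{s,i}$ differ from their $s=r$ versions by $O(|s-r|)$ in $\mathcal{C}^1$, and for a $\BV$ function $\|g\circ\psi_{s,i}-g\circ\psi_{r,i}\|_{L_1}\leq\|\psi_{s,i}-\psi_{r,i}\|_\infty\cdot\Var(g)$ by the standard $L_1$-modulus-of-continuity bound (together with a change of variables absorbing the Jacobians); with $|s-r|\leq C\|g\|_{\BV}$ this gives $\|P_sg-P_rg\|_{L_1}=O(\|g\|_{\BV}^2)=o(\|g\|_{\BV})$. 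Collecting the estimates,
\[
\widetilde P(u+g)=\widetilde Pu+P_rg+G'(\phi(u))\,\phi(g)\,w_r(u)+o(\|g\|_{\BV}) ,
\]
i.e. $D\widetilde P|_u=P_r+G'(\phi(u))\,w_r(u)\otimes\phi$, which restricts on $\{g:\int_Xg\,d\lambda=0\}$ to the Gâteaux derivative found above.
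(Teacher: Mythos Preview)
Your proposal is correct, and the overall decomposition
$\widetilde P(u+g)-\widetilde P u=(P_su-P_ru)+P_rg+(P_sg-P_rg)$ is the same as the paper's. The genuine difference is in how you identify $\partial_s(P_su)=P_s((uv_s)')$. The paper works on the \emph{inverse-branch} side: it writes $P_ru=L(u\circ f_{M_r^{-1}}\cdot f_{M_r^{-1}}')$ with $Lh:=h+h\circ f_{\binom{1\,1}{0\,1}}$, observes that $u\circ f_{M_r^{-1}}\cdot f_{M_r^{-1}}'=(U\circ f_{M_r^{-1}})'$ for the antiderivative $U$ of $u$, and then differentiates the smooth map $r\mapsto U\circ f_{M_r^{-1}}$ (no discontinuity in sight, since the inverse branches are globally defined on $X$) before applying the outer derivative $(\cdot)'$. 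This antiderivative trick replaces your integration-by-parts step and, crucially, it never sees the moving breakpoint $\alpha_s$: the ``boundary cancellation'' you carefully track is invisible in the paper's formulation. Your dual computation against test functions $h$ is more elementary and makes the role of $v_s(\pm\tfrac12)=0$ explicit, at the price of the bookkeeping you flag; the paper's version is slicker and gives the uniform $C^0$ remainder bound $|L(R_{s,r}')|\leq C(s-r)^2$ (with $C$ depending only on $\|u\|_{\mathcal C^2}$) in one stroke, which is exactly what feeds into the $\BV\to L_1$ Fr\'echet statement. For the $P_sg-P_rg$ term the two arguments coincide.
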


\begin{proof}
In order to simplify the notation define
  a kind of transfer operator $L$ by $L u:=u+u\circ
  f_{{1\,1\choose0\,1}}$ and note that $(Lu)'=Lu'$. Observing that $f_{N_r^{-1}}=f_{M_r^{-1}}\circ
  f_{{1\,1\choose0\,1}}$, we have $P_ru=L(u\circ f_{M_r^{-1}}\cdot
  f_{M_r^{-1}}')$. Define
  \begin{equation}
    \label{eq:v_r}
    v_{r}(x):=\left(\frac{\partial}{\partial r}f_{M_r^{-1}}\right)(f_{M_r}(x))
    =
    \frac{4x^2-1}{4-r^2}.
  \end{equation}
{}For a function
  $u\in \mathcal{C}^2(X)$ denote by $U$ the antiderivative of $u$. Then
  \begin{equation}
    \label{eq:u-diff}
    \begin{split}
      u\circ f_{M_s^{-1}}&\cdot f_{M_s^{-1}}'
      -u\circ f_{M_r^{-1}}\cdot f_{M_r^{-1}}'
      =
      \left(U\circ f_{M_s^{-1}}-U\circ f_{M_r^{-1}}\right)'\\
      =&
      \left((s-r)\cdot\frac{\partial}{\partial r}(U\circ f_{M_r^{-1}})
        +R_{s,r}\right)'
    \end{split}
  \end{equation}
  where
  \begin{displaymath}
    R_{s,r}(x):=
    \int_r^s(s-t)\,\frac{\partial^2}{\partial
      t^2}(U(f_{M_t^{-1}}(x)))\,dt\ .
  \end{displaymath}
  As $\frac{\partial}{\partial r}(U\circ f_{M_r^{-1}})=u\circ
  f_{M_r^{-1}}\cdot\frac{\partial}{\partial r}f_{M_r^{-1}}
  =\left(u\,v_r\right)\circ f_{M_r^{-1}}$, we have
  \begin{displaymath}
    \left(\frac{\partial}{\partial r}(U\circ f_{M_r^{-1}})\right)'
    =
    \left(u\,v_r\right)'\circ
    f_{M_r^{-1}}\cdot f_{M_r^{-1}}'\ .
  \end{displaymath}
  Together with \eqref{eq:u-diff} this yields
  \begin{displaymath}
    \begin{split}
      P_su-P_ru
      &=
      L\left(u\circ f_{M_s^{-1}}\cdot f_{M_s^{-1}}'
        -u\circ f_{M_r^{-1}}\cdot f_{M_r^{-1}}'\right)\\
      &=
      (s-r)\,L\left((u\,v_r)'\circ f_{M_r^{-1}}\cdot f_{M_r^{-1}}'\right)
      +L\left(R_{s,r}'\right)\\
      &=
      (s-r)\,P_r\left((u\,v_r)'\right)+L\left(R_{s,r}'\right)
    \end{split}
  \end{displaymath}
  and $|L(R_{s,r}')(x)|\leq C\,(s-r)^2$ with a constant that involves only the
  first two derivatives of $u$.

  Now let $u\in \mathcal{C}^2(X)$ be a probability density, and let
  $g\in L_1(X,\lambda)$
  be such that $\int g\,d\lambda=0$. Let $r:=G(\phi(u))$ and
  $s:=G(\phi(u+g))$. Then
  \begin{equation}
    \begin{split}
      \widetilde P(u+g)-\widetilde P(u)
      &=
      (P_s u-P_r u)+P_rg+(P_s g-P_r g)\\
      =
      (s-r)&\,P_r\left((u\,v_r)'\right)+P_r(g)+(P_s g-P_r g)+L(R_{s,r}')\ .
    \end{split}
  \end{equation}
  This implies at once formula \eqref{eq:gateaux} for the directional
  derivative, and as $\|P_sg-P_rg\|_1\to0$ ($s\to r$) uniformly for $g$ in the
  unit ball of $\BV(X)$, also \eqref{eq:diff-BV-L1} follows at once.
\end{proof}
\begin{prop}[ \textbf{$u\equiv1$ is a hyperbolic fixed point of $\widetilde
    P$}]
  \label{prop:hyperbolic-fixed}
  In the bistable regime, $u\equiv1$ is a hyperbolic fixed point of
  $\widetilde P|_{\Dc\cap\BV(X)}$ in the following sense: the derivative of
  $\widetilde P:\Dc\cap\BV(X)\to L_1(X,\lambda)$ at $u\equiv1$ has a
  one-dimensional unstable subspace and a codimension $1$ stable subspace.
\end{prop}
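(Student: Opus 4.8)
The plan is to differentiate $\widetilde P$ at the flat density $u_0=1_X$ by means of Lemma~\ref{lemma:differentiability}, to recognise the result as a rank-one perturbation of the Perron--Frobenius operator $P_0$ of the piecewise-affine (doubling-type) map $T_0$, and then to read off its spectrum. First I would evaluate \eqref{eq:diff-BV-L1} at $1_X$: here $\phi(1_X)=0$, so $r=G(0)=0$; with $v_0(x)=\tfrac14(4x^2-1)$ one has $(1_X v_0)'=2\,\id$, and since every branch of $T_0$ has constant slope $2$ --- so that $(P_0h)(y)=\tfrac12\bigl(h(\tfrac y2-\tfrac14)+h(\tfrac y2+\tfrac14)\bigr)$ --- one gets $P_0\,\id=\tfrac12\,\id$ and hence $w_0(1_X)=P_0(2\,\id)=\id$. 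Writing $B:=G'(0)$, Lemma~\ref{lemma:differentiability} therefore gives
\[
 A:=D\widetilde P|_{1_X}=P_0+B\,(\id\otimes\phi).
\]
Since $\int_X P_0 g\,d\lambda=\int_X g\,d\lambda$ and $\int_X\id\,d\lambda=0$, the operator $A$ maps $\BV(X)$ into itself and leaves the tangent space $V_0:=\{g\in\BV(X):\int_X g\,d\lambda=0\}$ of $\Dc\cap\BV(X)$ at $1_X$ invariant; it therefore suffices to analyse $\operatorname{spec}(A|_{V_0})$, which we are entitled to do in $\BV(X)$ rather than in $L_1(X,\lambda)$.

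Next I would run the textbook rank-one resolvent computation, recalling that in the bistable regime $H'(0)=B/6>1$, i.e.\ $B>6$. Using that $\id$ is a $P_0$-eigenvector, one has, for $\lambda\notin\operatorname{spec}(P_0|_{V_0})$,
\[
 A-\lambda=(P_0-\lambda)\Bigl(I+\tfrac{B}{1/2-\lambda}\,\id\otimes\phi\Bigr),
\]
and since $\phi(\id)=\int_{-1/2}^{1/2}x^2\,dx=\tfrac1{12}\neq0$, the rank-one factor fails to be invertible exactly at $\lambda=\lambda_*:=\tfrac12+B\,\phi(\id)=\tfrac{6+B}{12}$; inverting the rank-one factor shows that the resolvent of $A$ has at $\lambda_*$ only a simple pole, with rank-one eigenprojection. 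As $A\,\id=P_0\,\id+B\phi(\id)\,\id=\lambda_*\,\id$, the associated eigenspace is precisely the one-dimensional space $\R\,\id$ (the function $x\mapsto x$). Hence $\operatorname{spec}(A|_{V_0})\subseteq\operatorname{spec}(P_0|_{V_0})\cup\{\lambda_*\}$. Finally, $P_0$ is quasi-compact on $\BV(X)$ and $T_0$ is exact --- the folklore facts already underlying Theorem~\ref{theo:finite-systems} --- so its only unimodular eigenvalue $1$ is removed on $V_0$ and $\operatorname{spec}(P_0|_{V_0})$ lies in an open disc of radius $<1$; since $B>6$ forces $\lambda_*>1$, the spectrum of $A|_{V_0}$ meets the unit circle nowhere and has $\lambda_*$ as its sole point outside the closed unit disc.

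The spectral decomposition of $V_0$ attached to the splitting $\operatorname{spec}(A|_{V_0})=\{\lambda_*\}\sqcup\bigl(\operatorname{spec}(A|_{V_0})\cap\{|\zeta|<1\}\bigr)$ then writes $V_0=\R\,\id\oplus V_s$, with $\R\,\id$ the one-dimensional unstable subspace (eigenvalue $\lambda_*>1$) and $V_s$ the stable subspace, of codimension $1$ in $V_0$ and with $A$-spectral radius $<1$; this is the asserted hyperbolicity. I do not expect a serious obstacle: the computation is routine once Lemma~\ref{lemma:differentiability} is available, the only mild points being that the spectral analysis may be carried out in $\BV(X)$ (because $A$ preserves $\BV(X)$) and that one invokes the standard quasi-compactness-plus-exactness input for $P_0$ already used earlier in the paper.
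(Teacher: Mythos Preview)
Your proof is correct and follows the paper's approach: both compute $D\widetilde P|_{1_X}=P_0+B\,[x]\otimes\phi$ via Lemma~\ref{lemma:differentiability} (with the same arithmetic $w_0(1)=P_0[2x]=[x]$, $\phi([x])=\tfrac1{12}$) and identify the unstable eigendirection $[x]$ with eigenvalue $\lambda_*=\tfrac12+\tfrac{B}{12}>1$.

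The only noteworthy difference lies in the treatment of the stable part. The paper simply observes that on the codimension-one subspace $\ker\phi\cap\ker\lambda$ one has $Qf=P_0f$ and $\Var(Qf)\leq\tfrac12\Var(f)$; you instead run a rank-one resolvent computation together with quasi-compactness of $P_0$ on $\BV(X)$ to localise $\operatorname{spec}(A|_{V_0})$ in $\{|\zeta|<1\}\cup\{\lambda_*\}$ and then take the spectral projection. Your route has the advantage of producing a genuinely $A$-invariant stable subspace $V_s$ (the paper's candidate $\ker\phi\cap\ker\lambda$ is not $P_0$-invariant, as one checks on, e.g., $f(x)=x^3-\tfrac{3}{20}x$), at the modest cost of invoking the folklore spectral gap for $P_0$ rather than the elementary variation bound.
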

\begin{proof}
Let $Q:=D\widetilde P|_{u\equiv1}$. As $G'(0)=B$ and $w_0(1)=P_0[2x]=[x]$,
it follows from \eqref{eq:diff-BV-L1} that $Q=P_0+B\,[x]\otimes\phi$. (Here
$[2x]$ denotes the function $x\mapsto2x$, etc.) Observe now that
$\phi([x])=\frac1{12}$. Then $Q[x]=P_0[x]+\frac B{12}\,[x]=(\frac12+\frac
B{12})[x]$ so that, for $B>6$, $Q$ has the unstable eigendirection $[x]$ with
eigenvalue $\lambda:=\frac12+\frac{B}{12}>1$. On the other hand, as
$\phi(1)=0$, we have $Q1=P_01=1$, so the constant density $1$ is a neutral
eigendirection, and finally, for $f\in\ker(\phi)\cap\ker(\lambda)$, we have
$Qf=P_0f$, so $\Var(Qf)\leq\frac12\Var(f)$.
\end{proof}

\section{The noisy system}
\label{sec:noisy}
In Theorem~\ref{theo:passage-to-infinity} we proved that, in the bistable
regime, each weak accumulation point of the sequence
$($\boldmath$\mu$\unboldmath$_N\circ\epsilon_N^{-1})_{N\geq1}$ is of the form
$\alpha\,\delta_{u_{-r_*}\lambda}+(1-2\alpha)\,\delta_{u_0\lambda}+\alpha\,\delta_{u_{r_*}\lambda}$
for some $\alpha\in[0,\frac12]$, i.e. that the stationary states of the
finite-size systems approach a mixture of the stationary states of the
infinite-size system.  It is natural to expect that actually $\alpha=\frac12$,
meaning that any limit state thus obtained is a mixture of \emph{stable}
stationary states of $\widetilde P$.  While we could not prove this for the
model discussed so far, we now argue that this conjecture can be verified if
we add some noise to the systems.

At each step of the dynamics we
perturb the parameter of the single-site maps by a small amount. To make this
idea more precise, let
\begin{equation}
  \begin{split}
      r(Q,t)&=G(\phi(Q)+t)\quad\text{for }Q\in{\mathsf P}(X)\text{ and
      }t\in\R\,,\text{ in particular}\\
      r(\x,t)&=G(\phi(\x)+t)\quad\text{for }\x\in X^N\text{ and }t\in\R\,.
  \end{split}
\end{equation}
Let $\eta_1,\eta_2,\dots$ be i.i.d. symmetric real valued random variables
with common distribution $\varrho$ and $|\eta_n|\leq\varepsilon$. For
$n=1,2,\dots$ and $\x\in X^N$ let us define the $X^N$-valued Markov process
$(\xi_n)_{n\in\N}$ by $\xi_0=\x$ and
\begin{equation}
  \xi_{n+1}=T_{r(\xi_n,\eta_{n+1})}(\xi_n).
\end{equation}
Assume now that the distribution of $\xi_n$ has density $h_n$ w.r.t. Lebesgue
measure on $X^N$. Then routine calculations show that the distribution of
$\xi_{n+1}$ has density $\int_\R P_{N,t}h_n\,d{\varrho}(t)$ where $P_{N,t}$ is
the PFO of the map $\Tb_{N,t}:X^N\to X^N$,
$(\Tb_{N,t}(\x))_i=T_{r(\x,t)}(x_i)$. It is straightforward to check that, for
sufficiently small $\varepsilon$, Lemmas~\ref{lemma:homeo} --
\ref{lemma:inv-contraction} from Section~\ref{sec:finite-sys-proofs} carry
over to all $\Tb_{N,t}$ ($|t|\leq\varepsilon$) with uniform bounds, and that
$\int_{X^N}|P_{N,t}f-P_{N,0}f|d\lambda^N\leq\text{const}_N\cdot\varepsilon\cdot\Var(f)$
so that the perturbation theorem of \cite{Keller82} guarantees that the
process $(\xi_n)_{n\in\N}$ has a unique stationary probability
\boldmath$\mu$\unboldmath$_{N,\varepsilon}$ whose density w.r.t. $\lambda^N$
tends, in $L_1(X^N,\lambda^N)$, to the unique invariant density of $\Tb_N$ as
$\varepsilon\to0$.  This convergence is not uniform in $N$,
however. Nevertheless, folklore arguments show that there is some
$\tilde\varepsilon>0$ such that, for all $\varepsilon\in(0,\tilde\varepsilon)$
and all $N\in\N$ the absolutely continuous stationary measure
\boldmath$\mu$\unboldmath$_{N,\varepsilon}$ is unique so that the symmetry
properties of the maps $T_r$ and the random variables $\eta_n$ guarantee that
\boldmath$\mu$\unboldmath$_{N,\varepsilon}$ is symmetric in the sense that its
density $h_{N,\varepsilon}$ satisfies
$h_{N,\varepsilon}(x)=h_{N,\varepsilon}(-x)$.

On the other hand, for each fixed $\varepsilon>0$, all weak limit points of the measures
\boldmath$\mu$\unboldmath$_{N,\varepsilon}\circ\epsilon_N^{-1}$ as
$N\to\infty$ are stationary probabilities for the $\mathsf{P}(X)$-valued Markov process
$(\Xi_n)_{n\in\N}$ defined by
\begin{equation}
  \Xi_{n+1}=\Xi_n\circ T_{r(\Xi_n,\eta_{n+1})}^{-1}\ ,
\end{equation}
compare the definition of $\widetilde T : {\mathsf{P}}(X) \to {\mathsf{P}}(X)$
in \eqref{eq:r_mu}. The proof is completely analogous to the corresponding one
for the unperturbed case (see Lemma~\ref{le:contTtil} and
Corollary~\ref{coro:contTtil}).  For $\varepsilon\in(0,\tilde\varepsilon)$ the
symmetry of the \boldmath$\mu$\unboldmath$_{N,\varepsilon}$ carries over to
these limit measures $Q$ in the sense that $Q(A)=Q\{\hat\mu:\mu\in A\}$ for
each Borel measurable set $A\subseteq\mathsf{P}(X)$ where
$\hat\mu(U):=\mu(-U)$ for all Borel subsets $U\subseteq X$.

The following proposition then shows that, in the bistable regime and for
small $\varepsilon>0$ and large $N$, the measures
\boldmath$\mu$\unboldmath$_{N,\varepsilon}$ are weakly close to the mixture
$\frac12\left((u_{-r_*}\lambda)^\N+(u_{r_*}\lambda)^\N\right)$ of the stable
states for $\widetilde P$; compare also
Theorem~\ref{theo:passage-to-infinity}.

\begin{prop}[\textbf{Invariant measures for infinite-size noisy systems}]
  Suppose $G'(0)>6$ so that we are in the bistable regime and recall that the
  $\eta_n$ are symmetric random variables.

  Then, for every $\delta>0$
  there is $\varepsilon_0>0$ such that for each
  $\varepsilon\in(0,\varepsilon_0)$ the stationary distribution
  $Q_\varepsilon$ of $\Xi_{n}$
  on ${\mathsf P}(X)$ is supported on the set of measures $u\cdot\lambda\in{\mathsf
    P}(X)$ which have density $u=\int_Yw_\bullet\,d\mu\in\Dc'$ with
  representing measures $\mu\in{\mathsf P}(Y)$ satisfying
  $\d(\mu,\frac12(\mu_{-r_*}+\mu_{r_*}))\leq\delta$.
\end{prop}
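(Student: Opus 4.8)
The plan is to transport the deterministic trichotomy of Proposition~\ref{prop:summary2} to the randomly forced dynamics, the guiding heuristic being that a \emph{bounded} symmetric perturbation of small amplitude can neither carry an orbit of $\widetilde P$ across the basin boundary of $u_{\pm r_*}$ (so the stable wells stay trapping) nor let it linger near the hyperbolic density $u_0$ (so no stationary mass accumulates there). First, $Q_\varepsilon$ is concentrated on absolutely continuous measures, by the argument based on Theorem~3 of \cite{Keller00} already used in the proof of Theorem~\ref{theo:passage-to-infinity}. Next, the shadowing scheme of the proof of Proposition~\ref{prop:summary2} carries over unchanged to the noisy iteration: conditional expectations on the monotonicity partitions $\pi_N$ lie in $\Dc'$, the $L_1$-errors $\eta_N$ tend to $0$, and Lemmas~\ref{lemma:stability} and~\ref{lemma:shadow-estimate} are insensitive to whether the parameters come from the rule $r=G(\phi)$ or from $r=G(\phi+\eta)$. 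Since $\Dc'$ is $L_1$-closed (it is the image of the compact set $\mathsf{P}(Y)$ under $\mu\mapsto\int_Y w_\bullet\,d\mu$, continuous by \eqref{eq:norm_W_estimate}), it then suffices to treat the Markov chain $\mu_{n+1}=\Lc^*_{G(\phi_n+\eta_{n+1})}\mu_n$ on $\mathsf{P}(Y)$, with $\phi_n:=\int_Y\overline w\,d\mu_n$, and to show that for small $\varepsilon$ it is driven, almost surely and then permanently, into an arbitrarily small $\d$-neighbourhood of $\mu_{r_*}$ or of $\mu_{-r_*}$, with hitting time whose tail is geometric uniformly in the starting measure.

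The first half of this is an escape statement from the saddle: the noisy chain almost surely falls into the random analogue of the ``second case'' of the proof of Proposition~\ref{prop:summary2}, i.e.\ condition $(\clubsuit_{\overline\varepsilon})$ of Lemma~\ref{lemma:crucial} is violated, in one direction or the other, for some $\overline\varepsilon>0$ (which may be taken of order $\varepsilon$). Indeed, as long as the support interval $[a_n,b_n]$ of $\mu_n$ lies close to $0$, one has $|\phi_{n+1}|\ge\bar B|\phi_n|-C(b_n-a_n)^2$ with $\bar B>1$ (the quadratic estimate behind Lemma~\ref{lemma:roland}, equivalently the unstable eigenvalue $\tfrac12+\tfrac B{12}>1$ of $D\widetilde P|_{u_0}$ in Proposition~\ref{prop:hyperbolic-fixed}), while the widths $b_n-a_n$ contract at rate $\tfrac34$ by \eqref{eq:less3/4}, and a genuine nondegenerate kick of amplitude comparable to $\varepsilon$ is added at each step. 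A one-dimensional small-ball estimate for a process whose linear part expands by $\bar B>1$ and receives such kicks then forces $|\phi_n+\eta_{n+1}|$ to exceed $G^{-1}(\overline\varepsilon)$ with the support still $\overline\varepsilon$-far from the corresponding zero $z_{r_{n+1}}=-r_{n+1}$ at some finite (uniformly geometric) random time; once this happens with a definite sign, the perturbed second-case bookkeeping of the proof of Proposition~\ref{prop:summary2} — where now $\|\partial_r\sigma_r\|_\infty\eta_N$ is replaced by a quantity of the same form plus an $O(\varepsilon)$ term — keeps $a_k$ bounded away from $0$ from then on, so that $\mu_k\succ\delta_0$ for all subsequent $k$ (or $\mu_k\prec\delta_0$ in the mirror case).

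From there the second half is a perturbed version of the convergence results of \S\ref{se:propro}. For a suitable small $\theta_1>0$ the set $\{\mu:\supp(\mu)\subseteq[\theta_1,2/3]\}$ is forward-invariant for the noisy chain once $\varepsilon$ is small, since on it $\phi_n\ge\overline w(\theta_1)>0$ forces $r_{n+1}\ge r_{\min}>0$ and hence $\sigma_{r_{n+1}}(\theta_1)>\theta_1$; and on this set the monotone comparison of Lemma~\ref{lemma:controlled-convergence}\ref{item:controlled-convergence-b}), re-run with iterated maps whose parameters differ by $O(\varepsilon)$ from those forcing convergence to $\mu_{r_*}$ (harmless by Lemma~\ref{lemma:stability}, with uniform rate supplied by the contractivity of the $\Lc^*_r$ noted in \S\ref{se:propro}), gives $\d(\mu_{n+k},\mu_{r_*})\le\varepsilon'_k+c\varepsilon$ with $\varepsilon'_k\to0$; symmetrically one lands near $\mu_{-r_*}$ on the negative side. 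Consequently every stationary measure of the chain — in particular $Q_\varepsilon$, obtained as a weak limit of $\mu_{N,\varepsilon}\circ\epsilon_N^{-1}$ — is supported on $\{\d(\cdot,\mu_{r_*})\le c\varepsilon\}\cup\{\d(\cdot,\mu_{-r_*})\le c\varepsilon\}$, which for $\varepsilon<\varepsilon_0(\delta)$ lies in the $\delta$-neighbourhoods of $\mu_{\pm r_*}$; the symmetry of $\varrho$ and of the $T_r$, which (as recorded just before the proposition) passes to $Q_\varepsilon$, makes the two pieces carry equal mass $\tfrac12$, so that $Q_\varepsilon=\tfrac12Q_\varepsilon^++\tfrac12Q_\varepsilon^-$ with $Q_\varepsilon^\pm$ within $\delta$ of $\mu_{\pm r_*}$ — which, transported back through $\mu\mapsto\int_Y w_\bullet\,d\mu$ and read off its barycentre $\tfrac12(\mu_{-r_*}+\mu_{r_*})$, is the asserted concentration.

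The step I expect to be the real obstacle is the escape from the saddle: turning the hyperbolicity of $u_0$ into a quantitative, uniform-in-$\mu_0$ exit estimate for the randomly forced field process while keeping the geometric defect $C(b_n-a_n)^2$ dominated requires coupling the expanding field dynamics with the contracting width dynamics and checking that the noise kick wins throughout the escape — the noisy counterpart of the delicate ``second case'' analysis in the proof of Proposition~\ref{prop:summary2}. The perpetual nuisance that $\widetilde P$ is not a contraction on $L_1$ is already neutralised by routing everything through the shadowing estimate of Lemma~\ref{lemma:shadow-estimate}, exactly as in the deterministic proof.
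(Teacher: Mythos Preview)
Your overall architecture matches the paper's sketch: reduce to $\Dc'$ via the shadowing scheme, rephrase the noisy dynamics as the randomised IFS $\mu_{n+1}=\Lc^*_{G(\phi_n+\eta_{n+1})}\mu_n$ on $\mathsf P(Y)$, show escape from the saddle, and then run a perturbed version of Lemma~\ref{lemma:controlled-convergence}\ref{item:controlled-convergence-b}) to trap the chain near $\mu_{\pm r_*}$; symmetry finishes. The reduction, the trapping, and the symmetry conclusion are essentially the same in both.

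The genuine difference is the escape step. You propose to exploit the hyperbolicity of $u_0$ quantitatively: track the field $\phi_n$ via the estimate $|\phi_{n+1}|\ge\bar B|\phi_n|-C(b_n-a_n)^2$, couple it with the contraction of the support widths, and use a small-ball estimate for the kicked expanding process to force a violation of $(\clubsuit_{\overline\varepsilon})$ in uniformly geometric time. That is correct in spirit, but --- as you yourself flag --- it is the delicate part, because you must control the interplay of the expanding field, the contracting width, and the additive noise simultaneously, and the constant $\bar B$ in \eqref{eq:quadratic-estimate} was derived for the \emph{unperturbed} parameter rule $r=G(\phi)$, so a noisy analogue has to be rederived. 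The paper bypasses all of this with a soft contradiction: if $(\clubsuit_\varepsilon)$ held for every $\varepsilon>0$ along some realisation, then by Lemma~\ref{lemma:crucial} the supports would collapse to $\{0\}$, so both $r(\Xi_n,0)=G(\phi(\Xi_n))\to0$ and $r(\Xi_n,\eta_{n+1})=G(\phi(\Xi_n)+\eta_{n+1})\to0$; but the $\eta_n$ are i.i.d.\ with $\mathbb P\{\eta_n>y\}>0$, so the second limit cannot be zero. Hence a.s.\ some $(\clubsuit_{\bar\varepsilon})$ is violated at some finite time, and a countable-union argument produces the uniform $\bar n,\bar\varepsilon,\kappa$ needed for the Markov-chain trapping argument. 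This uses nothing about the unstable eigenvalue at $u_0$ and turns your ``real obstacle'' into a two-line observation. What your route buys is an explicit geometric tail on the escape time (useful if one wanted rates), whereas the paper's route buys simplicity and robustness.

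One minor point: your closing sentence about ``reading off the barycentre $\tfrac12(\mu_{-r_*}+\mu_{r_*})$'' does not literally recover the statement as written --- a measure $\mu$ close to $\mu_{r_*}$ is \emph{not} $\d$-close to $\tfrac12(\mu_{-r_*}+\mu_{r_*})$. What both your argument and the paper's actually establish is that $Q_\varepsilon$ is supported on the union of $\delta$-balls around $\mu_{r_*}$ and $\mu_{-r_*}$, with equal mass on each by symmetry; the phrasing in the proposition should be read in that sense.
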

\begin{proof}[Sketch of the proof]
  Let $Q$ be a stationary distribution of $\Xi_n$ that occurs as a weak limit
  of the measures \boldmath$\mu$\unboldmath$_{N,\varepsilon}$. So $Q$ is
  symmetric.  Just as in the proof of Theorem~\ref{theo:passage-to-infinity},
  where the ``zero noise limit'', namely the transformation $\widetilde T$ is
  treated, one argues that $Q$ is supported by the set of measures
  $u\cdot\lambda$, $u\in\Dc$. Arguing as in the derivation of
  \eqref{eq:shadow-estimate} one shows that densities in the support of $Q$
  can be approximated in $L_1(X,\lambda)$ by densities from $\Dc'$, and the
  stationarity of $Q$ implies that $Q$ is indeed supported by measures with
  densities from $\Dc'$. Therefore the process $(\Xi_n)_{n\geq0}$ can be
  described by the transfer operator $\widetilde\Lc_\varepsilon^*$ of an
  iterated function system on $Y$ just as the self-consistent PFO $\widetilde
  P$ is described by the operator $\widetilde\Lc^*$ in equation
  \eqref{Eq_P_tilda_acting_on_representing_measures}. The only difference is
  that in this case one first chooses the parameter $r$ randomly,
  $r=G(\int_Y\overline{w}\,d\mu+\eta_{n+1})$ and then the branch $\sigma_r$ or
  $\tau_r$ with respective probabilities $p_r(y)$ and $(1-p_r(y))$.

  Let $y>0$ be such that $\mathbb{P}\{\eta_n>y\}>0$. Suppose now that for some
  realisation of the process $(\Xi_n)_{n\geq0}$ the numbers
  $r(\Xi_n,\eta_{n+1})$ satisfy condition (${\clubsuit}_\varepsilon$) of
  Lemma~\ref{lemma:crucial} for all $\varepsilon>0$. Then it follows, as in
  the proof of Proposition~\ref{prop:summary}, that
  $\lim_{n\to\infty}r_n(\Xi_n,\eta_{n+1})=0$ and the measures $\Xi_n$ converge
  weakly to $\lambda$ so that also $\lim_{n\to\infty}r(\Xi_n,0)=0$. As
  $\eta_n>y>0$ for infinitely many $n$ almost surely, both limit cannot be
  zero at the same time, and we conclude that almost surely there is some
  $\varepsilon>0$ such that (${\clubsuit}_{\varepsilon}$) is not satisfied. In
  particular, there are $\bar\varepsilon>0$ and $\bar n\in\N$ such that
  (${\clubsuit}_{\bar\varepsilon}$) is violated for $n=\bar n-1$ with some
  positive probability $\kappa$.

  Let $\Xi_n=h_n\cdot\lambda$ with $h_n=\int_Yw_\bullet\,d\nu_n$. (So $h_n$
  and $\nu_n$ are random objects.)  As in \eqref{eq:a-lower} we conclude that
  $\sup\supp(\nu_{\bar n})<-\bar\varepsilon/3$ or $\inf\supp(\nu_{\bar
    n})>\bar\varepsilon/3$ in this case. Without loss of generality we assume
  that the latter happens with probability at least $\frac{\kappa}{2}$.

  Next, as in \eqref{eq:jump-from-y} we may choose $y\in(0,\bar\varepsilon/3)$
  so small that
  $0<y<y_1:=\sigma_{G(\overline{w}(y))}(y)\leq\inf\supp(\tilde\Lc^*\delta_y)$. Hence,
  for reasons of continuity, there is $\varepsilon_1>0$ such that also
  $y\leq\inf\supp(\tilde\Lc^*_\varepsilon\delta_y)$ if
  $\varepsilon\in[0,\varepsilon_1)$. Therefore, in view of the monotonicity of
  the operator $\tilde\Lc^*_\varepsilon$, we can conclude that
  $\inf\supp(\nu_n)\geq y$ for all $n\geq\bar n$ with probability at least
  $\frac{\kappa}{2}$.  Now fix $\delta>0$.  By
  Lemma~\ref{lemma:controlled-convergence}\ref{item:controlled-convergence-b}
  there is some (non-random) $n_1\in\N$ such that
  $\d(\tilde\Lc^{*n_1}\nu_{n},\mu_{r_*})\leq\frac\delta2$ for all $n\geq\bar
  n$ with probability at least $\frac{\kappa}{2}$. But then, by continuity
  reasons again, there is $\varepsilon_0\in(0,\varepsilon_1)$ such that
  $\d(\nu_{n+n_1},\mu_{r_*})<\delta$ for all $n\geq\bar n$ with probability at
  least $\frac{\kappa}{2}$. The claim of the proposition follows now, because
  $(\Xi_n)_n$ is a Markov process and because the stationary distribution $Q$
  is symmetric.
\end{proof}

\appendix

\section{{Some technical and numerical results}}

\subsection{Proof of Lemma \ref{le:contTtil}}
\label{se:conTtil}

 It suffices to prove the convergence for evaluations of
any Lipschitz continuous function $\varphi$ defined on $X$. Let us
denote $r_n=r(Q_n)$ (resp. $r=r(Q)$), and $\alpha_n$ (resp. $\alpha$) the
discontinuity point of $T_{r_n}$ (resp. $T_r$). Recall that
$\alpha_n=-\frac{r_n}4$ (resp. $\alpha=-\frac{r}4$).

Let us fix $\varepsilon>0$. $Q$ being non-atomic, there exists
$\delta>0$ such that the interval $U:=[c-\delta,c+\delta]$ is of
$Q$-measure smaller that $\varepsilon$. The weak convergence of
$Q_n$ to $Q$ implies that $c_n$ tends to $c$, and that
$\limsup_{n\rightarrow+\infty}Q_n(U)\leq Q(U)$. Let us choose
$n_0$ such that for all $n\geq n_0$, $|c_n-c|<\frac{\delta}2$ and
$Q_n(U)<\varepsilon$. One then has
\begin{equation}
  \begin{split} \Big|\int_X \varphi \,d({\widetilde
T}Q)-\int_X \varphi \,d({\widetilde T}Q_n)\Big|=\Big|\int_X
\varphi\circ T_r \,dQ-\int_X \varphi\circ T_{r_n} \,d Q_n\Big|\\
\leq \Big|\int_X \varphi\circ T_r \,d(Q-Q_n)\Big|+\Big|\int_{U^c}
(\varphi\circ T_{r_n}-\varphi\circ
T_{r}) \,d Q_n\Big|\\+\Big|\int_{U} (\varphi\circ
T_{r_n}-\varphi\circ T_{r}) \,d Q_n\Big|\\ \leq \Big|\int_X
\varphi\circ T_r \,d(Q-Q_n)\Big|+\operatorname{Lip}(\varphi)\sup_{U^c}|T_{r_n}-T_r|
+2\varepsilon\|\varphi\|_\infty
  \end{split}
\end{equation} Since the application $\varphi\circ T_r$ as a single
discontinuity point, which is of zero $Q$-measure, the first term
converges to zero. The second one also goes to zero since it measures
the dependence of $T_r$ on its parameter away from the discontinuity
point (one can make an explicit computation).

\subsection{The fields of the densities $u_r$}
\label{ssec:infinite-sys-proofs-1}
We start with some observations on the function $\psi(r):=\phi(u_r)$ that are
based on symbolic computations and on numerical evaluations. One finds
\begin{equation}
  \label{eq:r-to-phi-hr}
  \psi(r)
  =
  \frac1r+\frac{\log\left( \frac{4+4\,r-3\,{r}^{2}}{4-4\,r-3\,{r}^{2}}\right) }
  {\log\left( \frac{4-9\,{r}^{2}}{4-{r}^{2}}\right) }
  =
  \frac{r}{6}+\frac{7\,{r}^{3}}{40}+\frac{461\,{r}^{5}}{2016}
  +\frac{4619\,{r}^{7}}{13440}+\dots\ .
\end{equation}
{}From this numerical evidence (see Figure~\ref{fig:plots-1} for a plot) it is
clear that, for $r\in[0,0.4]$,
\begin{gather*}
  \psi(r)\geq\frac r6,\quad\text{and}\\
  \frac{\psi''(r)}{(\psi'(r))^2}
  =\frac{189\,r}{5}-\frac{12862\,{r}^{3}}{175}
  +\frac{44487\,{r}^{5}}{500}-\frac{346403009\,{r}^{7}}{4042500}+\dots
  \leq\frac{189}5r.
\end{gather*}
Hence $H'(r)=G'(\psi(r))\,\psi'(r)\leq G'(\frac r6)\,\psi'(r)$.
 As
\begin{displaymath}
  H''=(G\circ\psi)''=\left(\frac{G''}{G'}\circ\psi+\frac{\psi''}{(\psi')^2}\right)\cdot(\psi')^2\cdot(G'\circ\psi)\ ,
\end{displaymath}
$H''(r)\leq0$ follows provided
$\frac{G''(\psi(r))}{G'(\psi(r))}\leq-\frac{189}5r$.
Therefore, assumption \eqref{eq:S-shape} is fulfilled, if
\begin{equation}
  G'(x)\leq\frac1{\psi'(6x)}\quad\text{or if}\quad
  \frac{G''(x)}{G'(x)}\leq-\frac{189}5\cdot6x
\end{equation}

{}For
$G(x)=A\tanh(\frac BAx)$, in which case $G'(x)=B/\cosh(\frac BA x)^2$ and
$\frac{G''(x)}{G'(x)}=-2\frac{B}{A}\tanh(\frac BAx)$
this can be checked numerically. (Observe that $0\leq A\leq 0.4$ and
distinguish the cases $B=G'(0)\leq6$ and $B>6$.)
{}For an illustration see the rightmost plot of $H(r)$ in
{}Figure~\ref{fig:plots-1}.

\begin{figure}
  \begin{minipage}{0.325\linewidth}
  \centering
    \includegraphics[width=0.95\linewidth,height=0.95\linewidth]{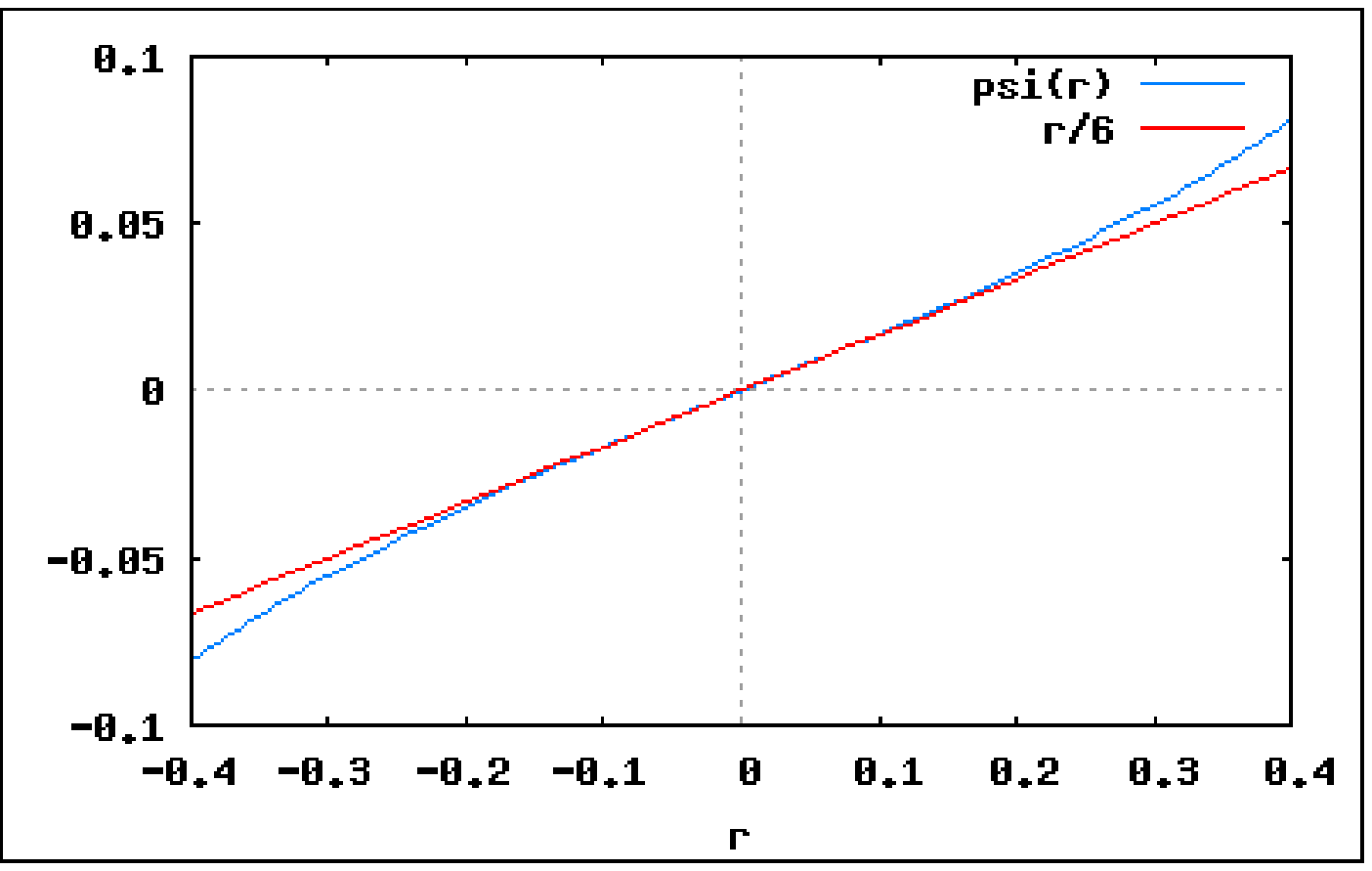}
  \end{minipage}
  \begin{minipage}{0.325\linewidth}
  \centering
    \includegraphics[width=0.95\linewidth,height=0.95\linewidth]{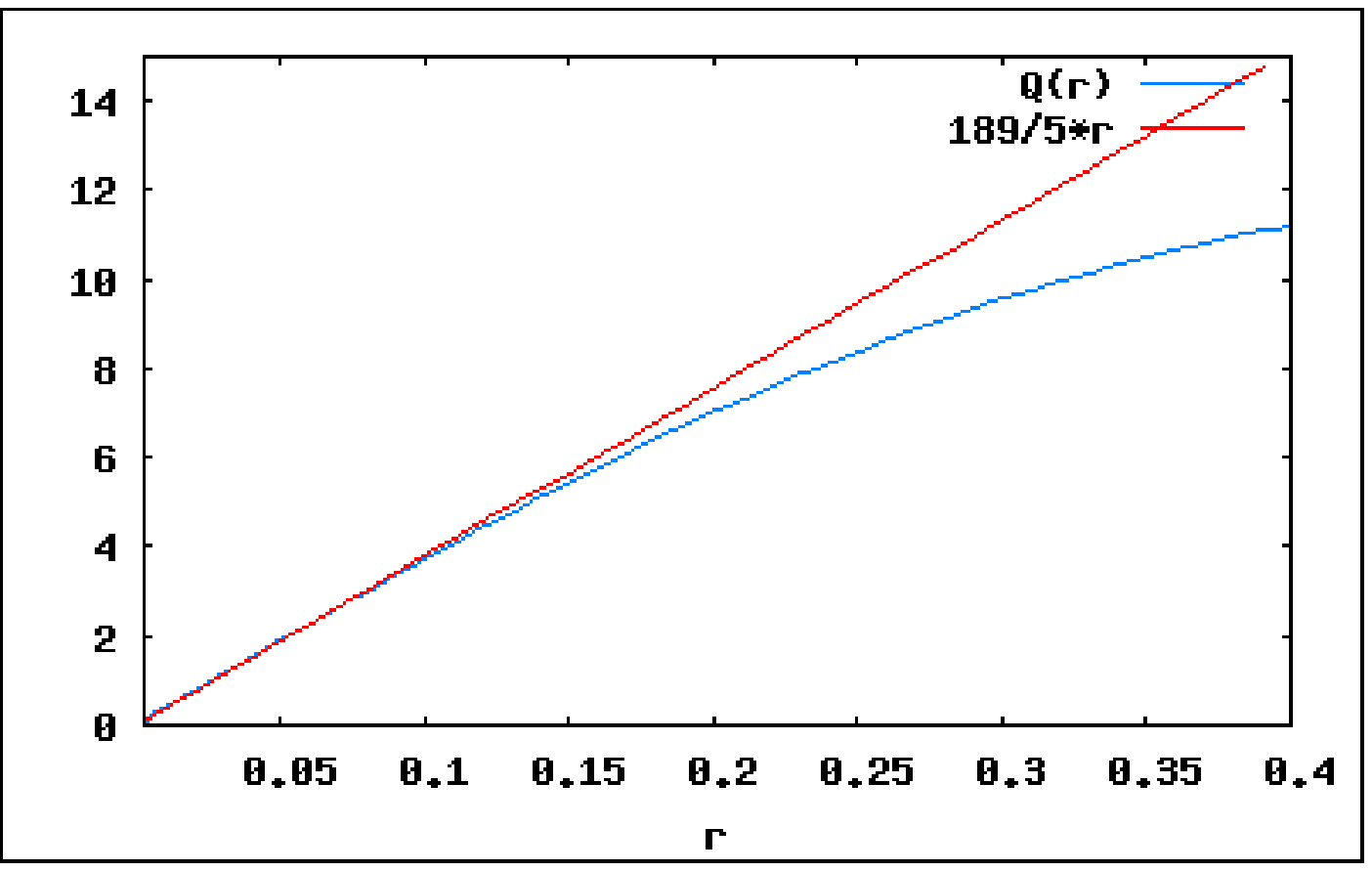}
  \end{minipage}
  \begin{minipage}{0.325\linewidth}
  \centering
    \includegraphics[width=0.95\linewidth,height=0.95\linewidth]{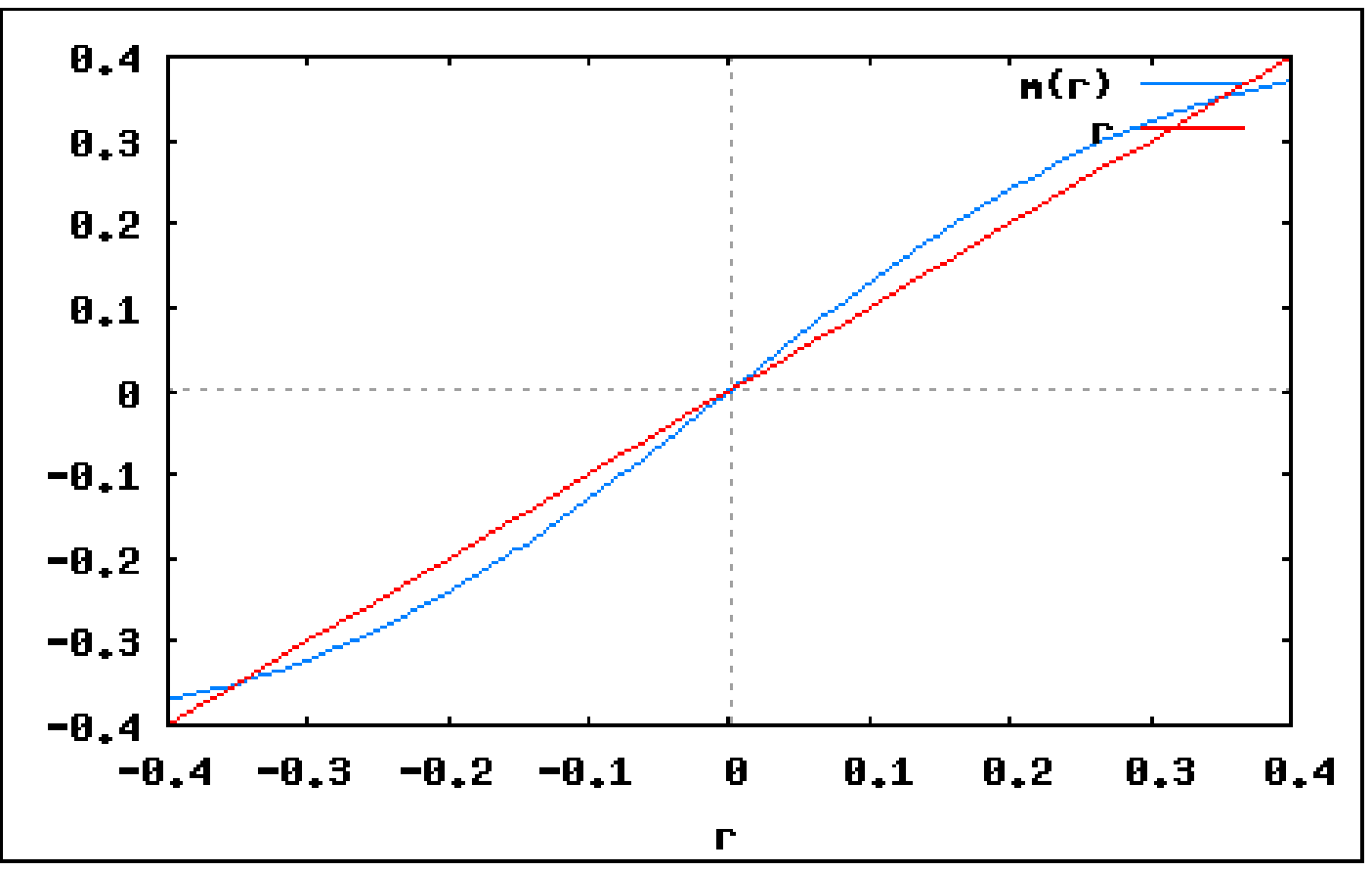}
  \end{minipage}
  \caption{The functions $\psi(r):=\phi(u_r)$ (left),
    $Q(r):=\frac{\psi''(r)}{(\psi'(r))^2}$ (centre), and $H(r)=A\tanh(\frac
    BA\phi(u_r))$ with $A=0.4$ and $B=8$ (right).}
  \label{fig:plots-1}
\end{figure}


\begin{thebibliography}{}
\bibitem{BaJe08}O.\ Bandtlow, O.\ Jenkinson, \emph{Invariant measures for real
    analytic expanding maps}, J. London Math. Soc. \textbf{75} (2007), 343-368.
\bibitem{ChaMo} T.\ Chawanya, S.\ Morita, {\em On the
   bifurcation structure of the mean-field fluctuation in the globally
   coupled tent map systems},
   Physica D {\bf 116} (1998), 44-70.
\bibitem{Chi98} G.\ Chichilnisky, \emph{Topology and invertible maps},
  Adv. Appl. Mathematics \textbf{21} (1998), 113-123.
\bibitem{DaGa1} D.A.\ Dawson, J.\ G\"artner, {\em Large deviations from
the McKean-Vlasov limit for weakly interacting diffusions}, Stochastics,
{\bf 20} (1987), 247-308.
\bibitem{llave03} R.\ de la Llave, \emph{Invariant manifolds associaed to
    invariant subspaces without invariant complements: a graph transform
    approach,} Mathematical Physics Electronic Journal \textbf{9} (2003),
  Paper 3. (http://www.ma.utexas.edu/mpej/Vol/9/3.ps)
\bibitem{Dudley02} R.\ Dudley, Real Analysis and Probability, Cambridge
  Univ. Press (2002).
\bibitem{ErPo1} S.V.\ Ershov,
  A.B.\ Potapov, {\em On mean field fluctuations in globally coupled
    maps}, Physica D {\bf 86} (1995), 523-558.
\bibitem{Jarvenpaa} E.\ J\"arvenp\"a\"a, {\em An SRB-measure for globally
    coupled analytic expanding circle maps,} Nonlinearity {\bf 10} (1997),
  1435-1469.
\bibitem{Kaneko1} K.\ Kaneko, {\em Globally coupled chaos violates the law of
    large numbers but not the central limit theorem}, Phys.\ Rev.\ Letters
  {\bf 65} (1990), 1391-1394.
\bibitem{Kaneko2} K.\ Kaneko, {\em Remarks on the mean field dynamics of
    networks of chaotic elements}, Physica D {\bf 86} (1995), 158-170.
\bibitem{Keller82} G.\ Keller, \emph{Stochastic stability in some chaotic
    dynamical systems}, Monatshefte Math. \textbf{94} (1982), 313-333.
\bibitem{Keller00} G.\ Keller, \emph{An ergodic theoretic approach to mean
    field coupled maps}, Progress in Probability, Vol. \textbf{46} (2000), 183-208.
\bibitem{Keller97} G.\ Keller, \emph{Mixing for finite systems of coupled tent
    maps}, Proc. Steklov Inst. Math. \textbf{216} (1997), 315-321.
\bibitem{KrPa92} S.G.\ Krantz, H.R.\ Parks, A Primer of Real Analytic
  Functions, Sec. ed., Birkh\"{a}user (1992).
\bibitem{KrSz69} K.\ Krzyzewski, W.\ Szlenk, \emph{On invariant measures for
    expanding differentiable mappings}, Stud. Math. \textbf{33} (1969), 83-92.
\bibitem{May84} D.H.\ Mayer, \emph{Approach to equilibrium for locally
    expanding maps in $\R^k$}, Commun. Math. Phys. \textbf{95} (1984), 1-15.
\bibitem{NaKo1} N.\ Nakagawa, T.S.\ Komatsu, {\em Dominant collective
    motion in globally coupled tent maps}, Phys. Rev. E \textbf{57} (1998) 1570.
\bibitem{NaKo2} N.\ Nakagawa, T.S.\ Komatsu, {\em Confined chaotic behavior in
    collective motion for populations of globally coupled chaotic elements},
  Phys. Rev E \textbf{59} (1999), 1675-1682.
\bibitem{PoSi04} C.\ P\"{o}tzsche, S.\ Siegmund, \emph{$C^m$-smoothness of
    invariant fiber bundles,} Topol. Methods Nonlinear Anal. \textbf{24} (2004),
  107-145. (http://www.math.uni-frankfurt.de/~siegmund/preprints/03hPoeSie.ps)
\bibitem{Rue76}D.\ Ruelle, \emph{Zeta-functions for expanding maps and Anosov
    flows}, Inventiones math. \textbf{34} (1976), 231-242.
\bibitem{Sch81}F. Schweiger, {\em Invariant measures for piecewise
    fractional linear maps}. J. Austral. Math. Soc. Ser. A
  \textbf{34} (1983), 55-59.
\bibitem{Szn91}A.S. Sznitman, Alain-Sol, {\em Topics in propagation of chaos.
Ecole d'Ete de Probabilites de Saint-Flour XIX---1989},
Lecture Notes in Math., {\bf 1464}, 165--251,Springer, Berlin, 1991.

\end{thebibliography}
\end{document}